\def\id{\textrm{id}}
\def\xto#1{\xrightarrow{#1}}
\newcommand{\nc}{\newcommand}
\newcommand{\myMO}[1]{{\fontshape{rm}{\textbf{#1}}}}
\DeclareMathOperator{\Grmod}{\myMO{Grmod}\hspace{+0.25ex}-\hspace{-0.25ex}}
\DeclareMathOperator{\Mod}{\myMO{Mod}\hspace{+0.25ex}-\hspace{-0.25ex}}
\DeclareMathOperator{\proj}{\myMO{proj.dim}}
\DeclareMathOperator{\gl}{\myMO{gl.dim}}
\DeclareMathOperator{\dimm}{\myMO{dim}}
\newcommand*\cocolon{%
        \nobreak
        \mskip6mu plus1mu
        \mathpunct{}%
        \nonscript
        \mkern-\thinmuskip
        {:}%
        \mskip2mu
        \relax
}
\nc{\Ch}{\operatorname{Ch}}
\nc{\sA}{{\mathscr A}}
\nc{\wt}{\widetilde} \nc{\bl}{\bullet} \nc{\al}{\alpha}
\nc{\sg}{\sigma} \nc{\vf}{\varphi} \nc{\om}{\omega}
\nc{\ve}{\varepsilon} \nc{\ol}{\overline} \nc{\lb}{\lambda}
\nc{\Lb}{\Lambda} \nc{\Gm}{\Gamma} \nc{\cP}{{\mathscr P}}
\nc{\sB}{{\mathscr B}}
\nc{\ul}{\underline} \nc{\os}{\overset} \nc{\us}{\underset}
\nc{\pa}{\partial} \nc{\wh}{\widehat} \nc{\sbs}{\subset} \nc{\br}{\breve}
\nc{\lra}{\longrightarrow} \nc{\all}{\allowdisplaybreaks}
\nc{\Ker}{\operatorname{Ker}} \nc{\Img}{\operatorname{Im}}
\nc{\Kan}{\operatorname{Kan}} \nc{\Hom}{\operatorname{Hom}}
\nc{\Imm}{\operatorname{Im}}   \nc{\Ho}{\operatorname{Ho}}
\nc{\Ext}{\operatorname{Ext}}    \nc{\Cone}{\operatorname{Cone}}
\nc{\pr}{\operatorname{pr}} \nc{\cls}{\operatorname{cls}}
\nc{\cof}{\operatorname{cof}}
\nc{\sSet}{\operatorname{\textbf{sSet}}}
\nc{\Map}{\operatorname{Map}}
\nc{\incl}{\operatorname{incl}}
\nc{\Hocolim}{\operatorname{Hocolim}}
\nc{\colim}{\operatorname{colim}}
\nc{\Endd}{\operatorname{End}}
\nc{\const}{\operatorname{const}}
\nc{\inn}{\operatorname{in}}
\nc{\Ev}{\operatorname{Ev}}
\nc{\rr}{\operatorname{\textbf{r}}}
\nc{\cop}{\operatorname{\textbf{l}}}
 \nc{\CCone}{\operatorname{\textbf{Cone}}}
 \nc{\dia}{\operatorname{dia}}
\numberwithin{equation}{subsection}
\newtheorem{theo}[equation]{Theorem}
\newtheorem{theor}{Theorem}
\newtheorem{lem}[equation]{Lemma}
\newtheorem{prop}[equation]{Proposition}
\newtheorem{coro}[equation]{Corollary}
\theoremstyle{definition}
\newtheorem{defi}[equation]{Definition}
\newtheorem{remk}[equation]{Remark}
\newtheorem{exmp}[equation]{Example}
\newtheorem*{defix}{Definition}
\newtheorem*{quest}{Question}
\newtheorem{numbered paragraph}[equation]{}
\newcommand{\mybox}{\ensuremath \Box}
\begin{document}

\def\S{\textbf{S}}
\def\Z{{\mathbb Z}}
\def\L{{\mathcal L}}
\def\B{{\mathcal B}}
\def\Q{{\mathcal Q}}
\def\M{{\mathcal M}}
\def\D{{\mathcal D}}
\def\R{{\mathscr R}}
\def\E{{\mathcal E}}
\def\K{{\mathcal K}}
\def\W{{\mathcal W}}
\def\N{{\mathcal N}}
\def\T{{\mathcal T}}
\def\A{{\mathcal A}}
\def\C{{\mathcal C}}
\def\P{{\mathcal P}}
\def\V{{\mathcal V}}
\def\G{{\mathcal G}}
\def\AA{{\mathscr A}}
\def\BB{{\mathscr B}}
\def\CC{{\mathscr C}}
\def\DD{{\mathscr D}}
\def\EE{{\mathscr E}}
\def\FF{{\mathscr F}}

\let\xto\xrightarrow

\title[]
{The derived category of complex periodic $K$-theory localized at an odd prime}

\author{Irakli Patchkoria}

%\thanks{This research was supported by the GNSF grant  DI/51/5-103/11}

\begin{abstract} We prove that for an odd prime $p$, the derived category $\D(KU_{(p)})$ of the $p$-local complex periodic $K$-theory spectrum $KU_{(p)}$ is triangulated equivalent to the derived category of its homotopy ring $\pi_*KU_{(p)}$. This implies that if $p$ is an odd prime, the triangulated category $\D(KU_{(p)})$ is algebraic. 
\end{abstract}

\subjclass[2010]{55P42, 18G55, 18E30}
\keywords{Derivator, model category, $K$-theory, module spectrum, stable model category, triangulated category}

\maketitle

\setcounter{section}{0}

\section{Introduction} 

\setcounter{subsection}{1}

The category of modules $\Mod KU$ over the complex periodic $K$-theory spectrum $KU$ has been studied for at least thirty years. Using the fact that the homotopy ring $\pi_*KU$ is isomorphic to the Laurent polynomial ring $\Z [u, u^{-1}]$, $|u|=2$, and the universal coefficient theorem, one can give a complete algebraic description of the homotopy category $\Ho(\Mod KU) = \D (KU)$, also known as the \emph{derived category} of $KU$. This description essentially goes back to Bousfield \cite{B85, B90}. Wolbert in \cite{W98} elaborated Bousfield's ideas in terms of modern notions of ring and module spectra. The basic idea is that the isomorphism type of any object $X \in \D(KU)$ is completely determined by the isomorphism type of the homotopy $\pi_*X$, considered as a graded $\pi_*KU$-module. Furthermore, using the universal coefficient theorem, morphisms in $\D(KU)$ admit an algebraic characterization in terms of matrices consisting of elements of $\Hom$ and $\Ext$ groups over $\pi_*KU$. Parallel to this, one can also consider the category of differential graded modules over $\pi_*KU \cong \Z [u, u^{-1}]$. The homotopy category of the latter, called the \emph{derived category} of $\pi_*KU$ and denoted by $\D(\pi_*KU)$, has very similar formal properties to those of $\D(KU)$. The isomorphism class of an arbitrary object $M \in \D(\pi_*KU)$ is determined by the isomorphism type of the homology $H_*M$, considered as a graded $\pi_*KU$-module. Moreover, the morphisms in $\D(\pi_*KU)$ can also be described in terms of matrices consisting of elements of certain $\Hom$ and $\Ext$ groups over $\pi_*KU$. 

This analogy between $\D(KU)$ and $\D(\pi_*KU)$ turns out to be a shadow of a much stronger statement. It follows from \cite[4.3.2]{G99} and \cite{W98} that there is an equivalence of categories
$$\D(KU) \sim \D(\pi_*KU).$$
In fact, the results of \cite{W98} and \cite{G99} yield many other equivalences of similar type. Let $R$ be an $A_{\infty}$-ring spectrum. Suppose that the graded global homological dimension of the homotopy ring $\pi_*R$ is at most one and assume that $\pi_*R$ is concentrated in even degrees. Then there is an equivalence of categories
$$\D(R) \sim \D(\pi_*R),$$
where $\D(R)$ is the derived category of $R$ (the homotopy category of module spectra over $R$) and $\D(\pi_*R)$ is the derived category of $\pi_*R$ (the homotopy category of differential graded modules over $\pi_*R$). Examples of such $A_{\infty}$-ring spectra include periodic complex topological $K$-theory $KU$, periodic Adams summands $E(1)$ (also known as the Johnson-Wilson spectra of height one), connective Morava $K$-theories $k(n)$ and $p$-local real periodic topological $K$-theory $KO_{(p)}$ for an odd prime $p$. We note that if the infinite loop space $\Omega^{\infty} R$ is not a product of Eilenberg-Mac Lane spaces, then the model  category $\Mod R$ is not Quillen equivalent to the model category of differential graded modules over $\pi_*R$. Hence the equivalence $\D(R) \sim \D(\pi_*R)$ does not come from a zig-zag of Quillen equivalences if $R = KU, E(1), k(n)$, or $KO_{(p)}$. 

The category $\D(R)$ has the important extra structure of a \emph{triangulated category}. The triangulated structure comes from the \emph{mapping cone sequences} (\emph{distinguished triangles})
$$\xymatrix{X \ar[r]^f & Y \ar[r] & \Cone(f) \ar[r] & \Sigma X}$$
and in particular encodes how the objects of $\D(R)$ are built out of $R$ using suspensions, desuspensions and cell attachments. Similarly, the derived category $\D(\pi_*R)$ together with the algebraic mapping cone sequences is a triangulated category (see Subsection \ref{stmodexmp} for the details). 

The question arises whether the equivalence $\D(R) \sim \D(\pi_*R)$ preserves the triangulated structures. More specifically, we are interested whether the equivalence $\D(KU) \sim \D(\pi_*KU)$ is compatible with the distinguished triangles. In order to be able to formulate these questions more precisely, we recall some terminology:

\begin{defix} Let $\T_1$ and $\T_2$ be triangulated categories. A \emph{triangulated equivalence} between $\T_1$ and $\T_2$ is an equivalence of categories
$$\xymatrix{F \colon \T_1 \ar[r]^-{\sim} & \T_2}$$
together with a natural isomorphism (called the \emph{suspension isomorphism}) 
$$\alpha \colon F \circ \Sigma \cong \Sigma \circ F$$
such that for any distinguished triangle 
$$\xymatrix{X \ar[r]^-f & Y \ar[r]^-g & Z \ar[r]^-h & \Sigma X}$$
in $\T_1$, the triangle
$$\xymatrix{F(X) \ar[r]^-{F(f)} & F(Y) \ar[r]^-{F(g)} & F(Z) \ar[rr]^-{\alpha_X \circ F(h)} & & \Sigma F(X)}$$
is distinguished in $\T_2$. Here $\alpha_X \circ F(h)$ is the composite
$$\xymatrix{F(Z) \ar[r]^{F(h)} & F(\Sigma X) \ar[r]^-{\alpha_X} & \Sigma F(X).}$$

\end{defix}

If there exists a triangulated equivalence between $\T_1$ and $\T_2$, then we say that $\T_1$ and $\T_2$ are \emph{triangulated equivalent}. 

Now we can pose the above advertised question precisely: 

\begin{quest} Is the equivalence $\D(KU) \sim \D(\pi_*KU)$ triangulated?

\end{quest}

One could also ask a weaker question: Does there exist a triangulated equivalence between $\D(KU)$ and $\D(\pi_*KU)$? The answer to any of these questions is not known. Similar questions can also be formulated when $R=E(1), k(n)$ or $KO_{(p)}$. 

The purpose of this work is to provide partial solutions to these problems. The following is Theorem \ref{mainKUtheo} in the paper and is proved at the very end:

\begin{theor} \label{theo1} Let $p$ be an odd prime. Then the derived categories $\D(KU_{(p)})$ and $\D(\pi_*KU_{(p)})$ are triangulated equivalent.
\end{theor}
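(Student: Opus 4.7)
The plan is to promote the known abstract equivalence $\D(KU_{(p)}) \sim \D(\pi_*KU_{(p)})$ to a triangulated one by realizing it at the level of stable derivators and then passing to underlying categories. An equivalence of stable derivators automatically induces a triangulated equivalence on underlying categories, so this will suffice. The crucial input specific to odd primes is the Adams splitting
$$KU_{(p)} \simeq \bigvee_{i=0}^{p-2} \Sigma^{2i} E(1),$$
where the Adams summand $E(1)$ has homotopy ring $\pi_* E(1) = \Z_{(p)}[v_1, v_1^{-1}]$ with $|v_1| = 2p-2$, which is $(2p-2)$-sparse of graded global homological dimension one.

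First, both $\D(KU_{(p)})$ and $\D(\pi_*KU_{(p)})$ are to be presented as underlying categories of stable derivators: the former coming from the model category $\Mod KU_{(p)}$, the latter from the model category of unbounded differential graded $\pi_*KU_{(p)}$-modules. These are the derivators whose equivalence I aim to establish.

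Next, I would construct a Franke-type equivalence of the analogous derivators for the Adams summand, following the ideas of \cite{G99}. The heuristic is that when the sparseness $N$ of $\pi_*R$ strictly exceeds its graded global dimension $n$, every object on the topological side can be assembled as a homotopy colimit of a canonical $N$-indexed diagram built from its homotopy groups, and this assembly is natural enough at the derivator level to match the corresponding construction on the algebraic side. For $E(1)$ at an odd prime the condition reads $2p-2 \geq 3$, which holds for every such $p$. To transfer this equivalence from $E(1)$ to $KU_{(p)}$, I would use the Adams splitting to identify both derivators in question with derivators of $\Z/(p-1)$-graded objects over $E(1)$ and $\pi_*E(1)$ respectively, and then apply the $E(1)$-equivalence componentwise.

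The principal obstacle is verifying that the Franke functor for $E(1)$ is an equivalence at the derivator level rather than merely on underlying categories. This requires controlling its behaviour on arbitrary small diagram shapes: one must establish compatibility with homotopy Kan extensions and show that the obstructions to coherently lifting diagrams vanish. The sparseness assumption $2p-2 \geq 3$ enters decisively here, forcing all obstructions to concentrate in $\Ext^1$ over $\pi_*E(1)$, matching the single nonvanishing cohomological obstruction in the topological assembly. Once the derivator equivalence for $E(1)$ is established and transported along the Adams splitting, the theorem follows by taking underlying categories.
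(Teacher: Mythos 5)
Your proposal has two serious gaps, each fatal on its own.

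First, the plan to establish an equivalence of \emph{stable derivators} and then pass to underlying categories cannot work. The paper observes that $\Mod KU_{(p)}$ and the model category of differential graded $\pi_*KU_{(p)}$-modules are not Quillen equivalent (the infinite loop space of $KU_{(p)}$ is not a product of Eilenberg-Mac Lane spaces), and by Renaudin's theorem a derivator equivalence between derivators of combinatorial model categories would lift to a zigzag of Quillen equivalences; so no such derivator equivalence exists. More concretely, the Adams spectral sequence argument behind the equivalence $\R \colon \Ho((\Mod \pi_*S)^{\P}) \to \Ho(\M^{\P})$ requires the bound $\dimm \P + \gl \pi_*S < N-1$, so it only controls posets of bounded dimension (for $E(1)$ at $p=3$, only one-dimensional posets), never the arbitrary small indexing categories a morphism of derivators must handle. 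This is precisely why the paper constructs only a \emph{partial} morphism of derivators restricted to low-dimensional posets and then reformulates the construction of distinguished triangles entirely in terms of one-dimensional diagrams so that the partial structure suffices (Subsections 3.2 and 4.1).

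Second, the transfer from $E(1)$ to $KU_{(p)}$ via an identification of $\D(KU_{(p)})$ with $\Z/(p-1)$-graded $E(1)$-modules is incorrect: the Adams splitting $KU_{(p)} \simeq \bigvee_i \Sigma^{2i} E(1)$ is a splitting of $E(1)$-module spectra, not an isomorphism of ring spectra, and does not yield the asserted identification of module categories. For instance $\pi_0 \Endd_{\D(KU_{(p)})}(KU_{(p)}) \cong \Z_{(p)}$ has no nontrivial idempotents, so $\D(KU_{(p)})$ admits no nontrivial product decomposition, whereas the $(p-1)$-fold product $(\D(E(1)))^{p-1}$ visibly does once $p \geq 3$. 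The paper's actual route is different: it uses the complexification $\iota \colon KO_{(p)} \to KU_{(p)}$, which is a $C_2$-Galois extension and hence separable, so the derived counit of the base change adjunction splits naturally (Proposition \ref{splitcounit}); this reduces the question of whether a triangle in $\D(KU_{(p)})$ is distinguished to the same question after applying $\iota^*$ into $\D(KO_{(p)})$ (Corollary \ref{retracttr}). Since $\pi_* KO_{(p)}$ is $4$-sparse of graded global dimension one, Theorem \ref{gentriangleequi} applies there directly, and the remaining work (Proposition \ref{KUKOcom} together with Theorem \ref{selfequiKO}) is checking that the Franke functors commute with $\iota^*$ compatibly with the suspension isomorphisms. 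If you prefer to use $E(1)$ rather than $KO_{(p)}$, the separability argument for the $C_{p-1}$-Galois extension $E(1) \to KU_{(p)}$ is the right replacement for your $\Z/(p-1)$-grading step.
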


Moreover, as a consequence of the general Theorem \ref{gentriangleequi} in Section 4, we also obtain:

\begin{theor} \label{theo2} Let $R$ be any of the following $A_{\infty}$-ring spectra:

\begin{itemize}

\item Periodic Adams summand $E(1)$ at an odd prime $p$;

\item The connective Morava $K$-theory spectrum $k(n)$, such that $p$ is odd or $n \neq 1$;

\item The $p$-local real periodic $K$-theory spectrum $KO_{(p)}$ for an odd prime $p$.

\end{itemize}

Then the derived categories $\D(R)$ and $\D(\pi_*R)$ are triangulated equivalent. 

\end{theor}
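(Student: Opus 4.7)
The plan is to deduce Theorem~\ref{theo2} directly from the general triangulated equivalence result Theorem~\ref{gentriangleequi}, whose hypotheses---in keeping with the discussion in the introduction---should amount to $R$ being an $A_\infty$-ring spectrum whose homotopy ring $\pi_*R$ is concentrated in even degrees and has graded global homological dimension at most one (together with whatever mild structural condition Section~4 requires). Granting that theorem, the proof of Theorem~\ref{theo2} reduces to verifying these hypotheses for each of the three families of spectra listed.

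For the periodic Adams summand $E(1)$ at an odd prime $p$, one has $\pi_*E(1) = \Z_{(p)}[v_1^{\pm 1}]$ with $|v_1| = 2(p-1)$, a graded Laurent polynomial ring over a discrete valuation ring, hence concentrated in even degrees and of graded global dimension one; the existence of an $A_\infty$-structure on $E(1)$ is standard. For $k(n)$ under the hypothesis that $p$ is odd or $n \neq 1$, the ring $\pi_*k(n) = \mathbb{F}_p[v_n]$ with $|v_n| = 2(p^n - 1)$ is a graded polynomial ring over a field, again concentrated in even degrees and of graded global dimension one, and the indicated restrictions are precisely those under which $k(n)$ is known to carry an $A_\infty$-structure (the exclusion of $k(1)$ at the prime $2$ reflecting the classical obstruction). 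Finally, for $KO_{(p)}$ at an odd prime, the fact that $\eta \in \pi_1 KO$ is $2$-torsion forces $\pi_* KO_{(p)} = \Z_{(p)}[\alpha^{\pm 1}]$ with $|\alpha| = 4$, which is once again a graded Laurent polynomial ring over a discrete valuation ring, concentrated in even degrees and of graded global dimension one; $KO_{(p)}$ is an $E_\infty$-ring spectrum and so in particular $A_\infty$.

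With these checks in hand, Theorem~\ref{theo2} becomes a direct application of Theorem~\ref{gentriangleequi}, so the substantive mathematical content lies entirely in that general theorem rather than in the present deduction. The genuine obstacle, accordingly, is not here but in Theorem~\ref{gentriangleequi} itself: establishing compatibility of the known equivalence $\D(R) \sim \D(\pi_*R)$ with the distinguished triangles on both sides. As signalled by the keyword list, the natural tool for this is an enhancement of the triangulated structures coming from derivators or from stable model categories, and I would expect the heart of that argument to be the promotion of the equivalence to an equivalence of such enhancements, from which compatibility with mapping cones follows formally.
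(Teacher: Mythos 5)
Your overall strategy is the same as the paper's --- Theorem \ref{theo2} is deduced from Theorem \ref{gentriangleequi} by verifying its hypotheses case by case --- and your computations of the homotopy rings are correct. However, you have misstated the key hypothesis of Theorem \ref{gentriangleequi} and, as a consequence, given the wrong reason for the restriction ``$p$ odd or $n \neq 1$'' in the $k(n)$ case. The hypothesis is not merely that $\pi_*R$ is concentrated in even degrees and has graded global dimension one: it is that $\pi_*R$ is $N$-sparse for some $N \geq 4$ (that is, concentrated in degrees divisible by $N$). This stronger sparseness condition is precisely why the theorem does not apply directly to $KU_{(p)}$, which is only $2$-sparse, and why the $KU_{(p)}$ case requires the entirely separate descent argument of Section 5.

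The exclusion of $k(1)$ at $p = 2$ accordingly has nothing to do with $A_\infty$-structures. Lazarev's construction, cited in the paper, provides $A_\infty$-structures on $k(n)$ for all primes $p$ and all $n$, and the paper explicitly notes that for $p = 2$, $n = 1$ the functor $\R$ is still an equivalence of categories; what remains open there is whether it is triangulated. The actual obstruction is that $|v_1| = 2(2^1 - 1) = 2$, so the required $4$-sparseness fails. Once you correct the hypothesis of Theorem \ref{gentriangleequi} and the explanation in the $k(n)$ case, the remaining verifications ($|v_1| = 2(p-1) \geq 4$ for $E(1)$ at odd $p$; $|v_n| = 2(p^n-1) \geq 4$ exactly when $p$ is odd or $n \geq 2$; $|v| = 4$ for $KO_{(p)}$) go through as you intended, and the deduction coincides with the paper's Examples \ref{oddko}, \ref{morava} and \ref{JohnsonWilson}.
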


The equivalences constructed here are isomorphic to the ones constructed in \cite[4.3.2]{G99}. The only slight difference is that the suspension isomorphisms might deviate from each other up to a unit in $\pi_0R$.   

These theorems in particular show that the derived categories $\D(KU_{(p)})$, $\D(E(1))$, $\D(k(n))$ and $\D(KO_{(p)})$ are algebraic in the sense of \cite{Schw1}. Theorem 1 and Theorem 2 also provide new examples of \emph{exotic triangulated equivalences}, i.e., triangulated equivalences which do not come from Quillen equivalences. A well-known example of such an equivalence is the triangulated equivalence between $\D(K(n))$ and $\D(\pi_*K(n))$, where $K(n)$ is periodic Morava $K$-theory. The proof of the latter uses that $\pi_*K(n) \cong \mathbb{F}_p[v_n, v_n^{-1}]$ is a graded field and hence the graded global homological dimension of $\pi_*K(n)$ is zero. Another important example is the triangulated equivalence of stable module categories of $\Z/p^2$ and $\mathbb{F}_p[\varepsilon]/(\varepsilon^2)$. These triangulated categories are both equivalent to $\mathbb{F}_p$-vector spaces. It follows from the result of Schlichting \cite{Schl} (see also \cite{MR}) that the associated model categories are not Quillen equivalent. Further examples can be potentially constructed using the preprint \cite{F96}  of Franke (see also \cite{R08}). Franke deals with the case of the $E(n)$-local stable homotopy category for higher chromatic primes. However, one of the key steps in the proof contains a gap (partially filled in \cite{Pat12}) and this still needs to be fixed. If we take this into account, to the author's knowledge Theorem 1 and Theorem 2 provide the first examples of exotic triangulated equivalences where the underlying homological algebra is not zero-dimensional (i.e., semi-simple). 

We would also like to remind the reader that in contrast to Theorem 1 and Theorem 2, there are examples of model categories which have the rigidity property that their homotopy categories admit a unique model up to Quillen equivalence. Examples include the model categories of spectra \cite{Schw}, $K_{(2)}$-local spectra \cite{R07}, and modules over Eilenberg-Mac Lane ring spectra (see \cite{SS03}). Moreover, the paper \cite{Schw1} shows that the homotopy category of spectra, also known as the \emph{stable homotopy category}, is not algebraic. 

Finally, before describing the methods of the proofs, we briefly mention connections of Theorem 1 with $C^*$-algebras. It follows from \cite{BJS} and \cite{DEKM} that the thick subcategory $\mathbf{B}$ generated by the $C^*$-algebra $\mathbb{C}$ inside the $KK$-category (also known as the \emph{bootstrap class}) embeds into $\D(KU)$ fully-faithfully. The papers \cite{Ben} and \cite{Mah} pose the question of whether the triangulated category $\mathbf{B}$ is algebraic. Theorem 1 implies that if we $p$-localize the category $\mathbf{B}$ at an odd prime $p$, then the resulting triangulated category $\mathbf{B}_{(p)}$ is algebraic.

\subsection*{Methods of the proofs} The proof of Theorem 2 is based on the ideas of Franke \cite{F96} and the results of the paper \cite{Pat12}. We make essential use of the theory of Grothendieck derivators, which seems to be the most convenient homotopy-theoretic language for this paper. This is not surprising since none of the equivalences above come from Quillen equivalences of model categories or equivalences of $(\infty, 1)$-categories. 

The general Theorem \ref{gentriangleequi} in Section 4 which implies Theorem 2 needs the assumption that the homotopy ring $\pi_*R$ is concentrated in degrees divisible by some number $N \geq 4$. Hence Theorem \ref{gentriangleequi} does not apply to $KU$ or $KU_{(p)}$, since $\pi_mKU$ is nontrivial for every even $m$. The strategy for proving Theorem 1 is to reduce to the case of $KO_{(p)}$. More precisely, we use that
$$\iota \colon KO_{(p)} \to KU_{(p)}$$
is a $C_2$-Galois extension in the sense of \cite{Rog08}. This implies that the counit of the adjunction  
$$\xymatrix{ - \wedge_{KO_{(p)}} KU_{(p)}  \colon \D(KO_{(p)}) \ar@<0.5ex>[r] & \D(KU_{(p)}) \cocolon \iota^* \ar@<0.5ex>[l]}$$
has a natural splitting. By combining the latter with the triangulated equivalence $\D(KO_{(p)}) \sim \D(\pi_*KO_{(p)})$ for an odd prime $p$, we get Theorem 1. 

Since the homotopy groups of $KO_{(2)}$ and $KO$ are much more involved than those of $KO_{(p)}$ for an odd $p$, our methods do not apply in the $2$-local or integral case. More precisely, $KO_{(2)}$ and $KO$ have homotopy groups in odd degrees and the global homological dimension of the homotopy rings is infinite. We do not know whether the $2$-local or integral version of Theorem 1 holds. This still remains an open problem. We also do not know whether proving the $2$-local version of Theorem 1 implies the integral statement since the fracture squares are non-canonical in general triangulated categories. However, we believe that the method which will solve the problem for $KU_{(2)}$ will most likely be applicable already integrally. 

\subsection*{Acknowledgements} First of all I would like to thank Tyler Lawson and Stefan Schwede for their interest in the subject and for encouraging me to think about this problem. Special thanks go to Dustin Clausen for answering my questions in Galois theory and to Moritz Groth for tutorials on derivators. I would also like to thank Rasmus Bentmann, Jens Franke, John Greenlees, Snigdhayan Mahanta, George Nadareishvili and Constanze Roitzheim for useful conversations.   

This research was supported by the Danish National Research Foundation through the Centre for Symmetry and Deformation (DNRF92) and the Shota Rustaveli National Science Foundation grant DI/27/5-103/12.

\setcounter{subsection}{1}

\section{Preliminaries} 

\subsection{Graded homological algebra} Let $B$ be a graded ring. We will denote by $\Grmod B$ the category of graded (right) $B$-modules. This category is an abelian category and has enough projective objects. For any graded $B$-module $M$, let $\proj M$ denote the (graded) \emph{projective dimension} of $M$ in $\Grmod B$. The \emph{graded global homological dimension} of $B$, denoted by $\gl B$, is the supremum of projective dimensions of objects in $\Grmod B$.

Next, let $N \geq 2$ be an integer. We say that a graded ring $B$ is $N$-\emph{sparse} if $B$ is concentrated in degrees divisible by $N$, i.e., $B_k=0$ if $k$ is not congruent to $0$ modulo $N$. 

If $A$ is a differential graded ring, then $\Mod A$ will denote the category of differential graded modules over $A$.

\subsection{Stable model categories and triangulated categories} \label{stmodexmp}

We will freely use in this paper the language of triangulated categories (see e.g., \cite{GM96}) and model categories (see e.g., \cite{Q67, DS95, H99}). 

For a model category $\M$, we denote by $\Ho(\M)$ the \emph{homotopy category of} $\M$. Further, given objects $X$ and $Y$ of $\M$, the notation $[X,Y]$ will stand for the set of morphisms in $\Ho(\M)$ between $X$ and $Y$.  

Recall that a pointed model category $\M$ is called \emph{stable} if the \emph{suspension functor} 
$$\Sigma \colon \Ho(\M) \to \Ho(\M)$$ 
is an equivalence of categories. For any stable model category $\M$, the homotopy category $\Ho(\M)$ is triangulated with $\Sigma$ the suspension functor \cite[7.1.6]{H99}. The distinguished triangles are defined using mapping cone sequences which we recall now. For simplicity assume that $\M$ is a simplicial model category. Then the suspension can be modeled by the left derived functor of $S^1 \wedge - \colon \M \to \M$, where $S^1= I / \{0,1\}$ and $I$ denotes the standard simplicial $1$-simplex. Next, let $f \colon X \to Y$ be a map between cofibrant objects in $\M$. Consider the pushout diagram
$$\xymatrix{X \ar[r]^f  \ar[d]_{1 \wedge -} & Y \ar[d]^{j} \\ CX \ar[r] & \Cone(f), }$$
where $CX$ denotes the smash product $(I,0) \wedge X$. The natural map $(I,0) \wedge X \to S^1 \wedge X$ and the trivial map $\ast \colon Y \to S^1 \wedge X$ induce by the universal property of pushout a map
$$\pa \colon \Cone(f) \to S^1 \wedge X.$$
A triangle of the form
$$\xymatrix{X \ar[r]^f & Y \ar[r]^-{j} & \Cone(f) \ar[r]^\pa & S^1 \wedge X}$$
is called an \emph{elementary distinguished triangle}. An abstract triangle
$$\xymatrix{A \ar[r] & B \ar[r] & C \ar[r] & \Sigma A}$$
in $\Ho(\M)$ is called \emph{distinguished} (or \emph{exact}) if it isomorphic to an elementary distinguished triangle in $\Ho(\M)$. 

\begin{exmp}\label{exmpI} For any ring $k$, the category $\Ch(k)$ of unbounded chain complexes of $k$-modules with the projective model structure is a stable model category \cite[2.3.11]{H99}. The weak equivalences and fibrations in this model structure are quasi-isomorphisms (i.e., homology isomorphisms) and degreewise epimorphisms, respectively.
\end{exmp}

\begin{exmp}\label{exmpII} Let $R$ be a symmetric ring spectrum. The category $\Mod R$ of right $R$-modules admits a stable simplicial model structure \cite[Corollary 5.5.2]{HSS00}. The homotopy category Ho($\Mod R$) is called the \emph{derived category of $R$}, denoted by $\D(R)$. Note that $R$ is a compact generator of $\D(R)$ and the representable homological functor associated to $R$ is the homotopy group functor $\pi_* \colon \D(R) \longrightarrow \Grmod \pi_*R$. \end{exmp}

\begin{exmp} \label{exmpIII} Let $A$ be a differential graded ring. Then, by Example \ref{exmpI} and \cite[4.1]{SS00} (see also \cite{H97}), the category $\Mod A$ of differential graded right modules over $A$ has a projective model structure in which the weak equivalences and fibrations are as in \ref{exmpI}. This model structure is stable as well. The category Ho($\Mod A$) is called the \emph{derived category of $A$}, denoted by $\D(A)$. The differential graded module $A$ is a compact generator of $\D(A)$ and the representable homological functor associated to $A$ is the homology $H_* \colon \D(A) \longrightarrow \Grmod H_*A$.

By \cite[2.15]{S07}, there is a symmetric ring spectrum $HA$ such that $\Mod HA$ is Quillen equivalent to $\Mod A$. In particular, the derived category $\D(A)$ is triangulated equivalent to $\D(HA)$. Thus, from the point of view of model category theory Example \ref{exmpIII} is a special case of Example \ref{exmpII}. \end{exmp}

\begin{remk}\label{algcone} Note that the triangulated structure on the derived category $\D(A)$ of a differential graded ring $A$ comes from the usual shift functor and algebraic mapping cone construction. More precisely, for any $M \in \D(A)$, the suspension of $M$, denoted by $M[1]$, is given by
$$M[1]_n= M_{n-1}, \;\;\; d^{M[1]}_n= -d^{M}_{n-1}.$$
The object $M[1]$ inherits the structure of a graded right $A$-module from that of $M$ (the action of $A$ on $M[1]$ is not twisted by a sign). Furthermore, the differential $d^{M[1]}$ satisfies the Leibniz rule.

Next, let $f \colon M \longrightarrow M'$ be a morphism of differential graded modules over $A$. The \emph{(algebraic) mapping cone} $\Cone(f)$ of $f$ is a differential graded $A$-module defined by
$$(\Cone(f))_n=M_{n-1} \oplus M'_n, \;\;\; n \in \Z,$$
$$d(m,m')=(-dm, f(m)+dm').$$
The object $\Cone(f)$ comes with canonical morphisms of differential graded modules
$$\iota \colon M'\longrightarrow \Cone(f), \;\;\;\iota(m')=(0,m')$$
and
$$\partial \colon \Cone(f) \longrightarrow M[1], \;\;\; \partial(m,m')=m.$$
These morphisms together with $f \colon M \longrightarrow M'$ form a triangle
$$\xymatrix{ M \ar[r]^f & M' \ar[r]^-\iota & \Cone(f) \ar[r]^\partial & M[1],}$$
called the \emph{elementary distinguished triangle} associated to $f$.

A triangle in $\D(A)$ is distinguished if and only if it is isomorphic to an elementary distinguished triangle in $\D(A)$.

\end{remk}

\subsection{Adams spectral sequence} \label{introASS}

We start by recalling the Adams spectral sequence for general triangulated categories. It can be constructed using the methods of \cite{C98}. See also \cite[Section 2.1]{F96}. We do not provide here details as they are standard. 

\begin{prop} [The Adams spectral sequence] \label{genASS} Let $\T$ be a triangulated category and $\A$ an abelian category with a given autoequivalence $[1] \colon \A \to \A$, enough projectives and finite global homological dimension. Suppose that $F \colon \T \to \A$ is a homological functor with a natural isomorphism $F(\Sigma X) \cong F(X)[1]$. Further assume that the following two conditions hold: 

{\rm (i)} If $C$ is an object of $\T$ and $F(C)$ is projective in $\A$, then the map
$$ F \colon \Hom_{\T}(C,X) \longrightarrow \Hom_{\A}(F(C), F(X))$$
is an isomorphism for any object $X$ of $\T$.

{\rm (ii)} For any projective object $P$ in $\A$, there exists $G \in \T$ such that
$$F(G) \cong P$$
holds in $\A$.

Then there is a bounded convergent spectral sequence
$$E_2^{pq}=\Ext^p_{\A}(F(X), F(Y)[q]) \Rightarrow \Hom_{\T}(X, \Sigma^{p+q}Y).$$ 
Here $[q] \colon \A \to \A$ denotes the iteration of $[1] \colon \A \to \A$ (or of its chosen inverse).

\end{prop}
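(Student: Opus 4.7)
The plan is to adapt the Adams spectral sequence construction of \cite{C98} (see also \cite[Section 2.1]{F96}): build an Adams resolution of $X$ inside $\T$ whose image under $F$ is a projective resolution of $F(X)$, and extract the spectral sequence from the associated exact couple.

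First, I would inductively construct an Adams tower of $X$. Set $X_0 = X$. Given $X_s$, pick a surjection $\pi_s \colon P_s \twoheadrightarrow F(X_s)$ from a projective $P_s \in \A$ (possible since $\A$ has enough projectives). By hypothesis (ii), realize $P_s \cong F(W_s)$ for some $W_s \in \T$, and by (i), lift $\pi_s$ to a unique morphism $W_s \to X_s$ in $\T$. Complete to a distinguished triangle
$$\xymatrix{X_{s+1} \ar[r] & W_s \ar[r] & X_s \ar[r] & \Sigma X_{s+1}.}$$
Applying $F$, surjectivity of $F(W_s) \twoheadrightarrow F(X_s)$ together with the natural isomorphism $F(\Sigma -) \cong F(-)[1]$ forces the connecting map $F(X_s) \to F(X_{s+1})[1]$ to vanish, whence $0 \to F(X_{s+1}) \to F(W_s) \to F(X_s) \to 0$ is short exact in $\A$. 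Splicing, the composites $F(W_{s+1}) \twoheadrightarrow F(X_{s+1}) \hookrightarrow F(W_s)$ assemble into a projective resolution $\cdots \to F(W_1) \to F(W_0) \to F(X) \to 0$. Because $\A$ has finite global dimension $d$, after $d$ steps the $d$-th syzygy $F(X_d)$ is already projective, so I would take $W_d \to X_d$ to realize the identity on $F$; then $F(X_{d+1}) = 0$, and one can choose $W_s = 0$ for all $s > d$.

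Next, applying $[-, \Sigma^* Y]$ to the tower produces an exact couple with $E_1^{s,t} = [W_s, \Sigma^t Y]$, and the triangles supply the differentials. By (i) applied to $C = W_s$ (whose $F$-image $P_s$ is projective),
$$E_1^{s,t} \;\cong\; \Hom_{\A}(P_s, F(Y)[t]).$$
A direct check using the naturality of (i) shows that the induced $d_1$-differential corresponds to precomposition with the differential $P_{s+1} \to P_s$ of the resolution above; it is induced by the composite $W_{s+1} \to X_{s+1} \to W_s$ coming from the two adjacent triangles. Therefore
$$E_2^{p,q} = \Ext^p_{\A}(F(X), F(Y)[q]),$$
and the abutment is $\Hom_\T(X, \Sigma^{p+q} Y)$.

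Finally, because $W_s = 0$ for $s > d$, the spectral sequence is concentrated in columns $0 \le p \le d$; it is thus bounded, the Adams filtration on $\Hom_{\T}(X, \Sigma^{p+q} Y)$ is finite and exhaustive, and strong convergence follows from the standard exact couple argument. The step I expect to be most delicate is verifying that the $d_1$-differential in the exact couple corresponds precisely to the differential of the algebraic projective resolution: this amounts to chasing the identification of condition (i) through the composite $W_{s+1} \to X_{s+1} \to W_s$ and tracking signs, but it is standard once the rest of the framework is in place. Everything else is formal given the two hypotheses plus the finite global dimension of $\A$.
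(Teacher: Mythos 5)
The paper does not give a proof of Proposition \ref{genASS}; it explicitly defers to \cite{C98} and \cite[Section 2.1]{F96}. Your proposal reconstructs exactly the Adams-resolution argument found in those references, and your construction is correct: the tower, the short exact sequences obtained by applying $F$ to each triangle, the identification of $E_1$ via hypothesis (i), and the identification of $d_1$ with restriction along $W_{s+1}\to X_{s+1}\to W_s$ all go through as you describe.

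One small point deserves to be made explicit in the convergence step. Having $W_s = 0$ for $s > d$ makes the $E_1$-page bounded, but on its own that would only give convergence to $\Hom_{\T}(X,\Sigma^{p+q}Y)$ modulo the ``infinite filtration'' $\bigcap_s \operatorname{im}\bigl(\Hom_{\T}(X_s,\Sigma^{*}Y)\to \Hom_{\T}(X,\Sigma^{*}Y)\bigr)$, since $D_1^{s,*}=\Hom_{\T}(X_s,\Sigma^*Y)$ need not vanish merely because $E_1^{s,*}$ does. What makes the filtration genuinely finite is that the tower itself terminates: once $F(X_{d+1})=0$, the zero object is projective, so hypothesis (i) applied with $C=X_{d+1}$ gives $\Hom_{\T}(X_{d+1},X_{d+1})\cong\Hom_{\A}(0,0)=0$, forcing $X_{d+1}\cong 0$ in $\T$. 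Hence $D_1^{s,*}=0$ for $s>d$, the filtration is finite and Hausdorff, and strong convergence to $\Hom_{\T}(X,\Sigma^{p+q}Y)$ follows. With this observation added, your argument is complete and coincides with the standard one the paper invokes.
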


Let $\T$ be a triangulated category with infinite coproducts and $S$ a compact generator of $\T$. Consider the functor 
$$\Hom_{\T}(S,-)_* \colon \T \to \Mod \Endd_{\T}(S)_*$$ 
and the shift functor 
$$[1] \colon \Grmod \Endd_{\T}(S)_* \to \Grmod \Endd_{\T}(S)_*,$$ 
where $\Hom_{\T}(S,-)_*$ denotes the graded $\Hom$ group $\Hom_{\T}(\Sigma^*S,-)$ and $\Endd_{\T}(S)_*$ stands for the graded endomorphism ring of $S$. These satisfy the conditions of Proposition \ref{genASS} (see e.g., \cite[Lemma 5.17 and Proposition A.4]{BKS04}). Hence Proposition \ref{genASS} yields the following:

\begin{coro} \label{ASS} Let $\T$ be a triangulated category with infinite coproducts and $S$ a compact generator of $\T$, and assume that the graded global homological dimension of $\Endd_{\T}(S)_*$ is finite. Then there is a bounded convergent spectral sequence
$$E_2^{pq}=\Ext^p_{{\Endd_{\T}(S)}_*}(\Hom_{\T}(S,X)_*, \Hom_{\T}(S,Y)_*[q]) \Rightarrow \Hom_{\T}(X, \Sigma^{p+q}Y).$$

\end{coro}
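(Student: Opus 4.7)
The plan is to apply Proposition \ref{genASS} directly, with the triangulated category $\T$ as given, the target abelian category $\A = \Grmod \Endd_{\T}(S)_*$ equipped with the degree shift $[1]$ as autoequivalence, and the homological functor $F = \Hom_{\T}(S,-)_*$. Everything then reduces to checking the hypotheses of Proposition \ref{genASS}.

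First I would observe the formal points. The functor $F$ is a homological functor into graded $\Endd_{\T}(S)_*$-modules, the action coming from precomposition of graded morphisms out of $S$. The natural isomorphism $F(\Sigma X) \cong F(X)[1]$ is essentially tautological from the definition of the graded Hom groups $\Hom_{\T}(\Sigma^* S, -)$. The category $\Grmod \Endd_{\T}(S)_*$ is a Grothendieck abelian category, so has enough projectives, and the finiteness of $\gl \Endd_{\T}(S)_*$ is the hypothesis of the corollary.

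Next I would verify conditions (i) and (ii). For (ii), note that the shifted copies $\{\Sigma^n S\}_{n \in \Z}$ map under $F$ to the free cyclic graded modules $\Endd_{\T}(S)_*[n]$, and coproducts in $\T$ are sent by $F$ to direct sums in $\Grmod$ because $S$ is compact. Hence every free graded module is of the form $F(G)$ for some coproduct $G$ of shifts of $S$, and by splitting retracts this yields every projective; here one uses that idempotents split in $\T$, which follows from the existence of countable coproducts (the standard Eilenberg swindle argument, as in \cite{BKS04}). For (i), given an object $C$ with $F(C)$ projective, realize $F(C)$ as a retract of a coproduct $G = \coprod_i \Sigma^{n_i} S$; by (ii) and the compactness-plus-idempotent-splitting argument, $C$ is itself a retract of such a $G$ in $\T$. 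On such coproducts the map
$$F \colon \Hom_{\T}(G, X) \longrightarrow \Hom_{\A}(F(G), F(X))$$
is an isomorphism because both sides compute $\prod_i \Hom_{\T}(\Sigma^{n_i} S, X) = \prod_i \pi_{n_i}^S(X)$ by the compact generator property. Passing to retracts preserves this isomorphism, giving (i).

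The main obstacle is condition (i) together with the idempotent-splitting issue, since a priori a projective module $F(C)$ might sit inside $\A$ without its idempotent decomposition lifting to $\T$. This is precisely where the existence of infinite coproducts and the compactness of $S$ are used, exactly as in \cite[Lemma 5.17 and Proposition A.4]{BKS04}; once these are invoked, the corollary follows immediately from Proposition \ref{genASS}.
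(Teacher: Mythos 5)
Your proposal is correct and is essentially the same as the paper's argument: apply Proposition \ref{genASS} with $\A = \Grmod \Endd_{\T}(S)_*$, $[1]$ the shift, and $F = \Hom_{\T}(S,-)_*$, and verify conditions (i) and (ii) via compactness of $S$ together with idempotent splitting in $\T$, which the paper delegates to \cite[Lemma 5.17 and Proposition A.4]{BKS04}. Your sketch merely unpacks the citation a bit more explicitly, but the route is identical.
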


In fact, this spectral sequence is a special case of a more general spectral sequence constructed in \cite{C98}.

\subsection{Franke's functor} \label{introFranke}

In this subsection we review the construction of Franke's functor. We mainly follow \cite[3.3]{Pat12} where the reader can also look up necessary details. 

Let $\M$ be a simplicial stable model category and $S$ a compact generator of $\Ho(\M)$. Having our main examples in mind, for convenience, we will denote by $\pi_*X$, the graded morphisms $[S,X]_*=[\Sigma^* S, X]$ in $\Ho(\M)$. Note that the suspension functor induces a natural isomorphism $\pi_*(\Sigma X) \cong \pi_*(X)[1]$.   

Now suppose that the graded ring $\pi_*S=[S,S]_*$ is $N$-sparse for $N \geq 2$ and the graded global homological dimension of $\pi_*S$ is less than $N$. The category $\Grmod \pi_*S$ splits as follows
$$\Grmod{\pi_*S} \sim \B \oplus \B[1] \oplus \ldots \oplus \B[N-1],$$ 
where $\B$ is the full subcategory of $\Grmod{\pi_*S}$ consisting of all those modules which are concentrated in degrees divisible by $N$. Next, let $\C_N$ stand for the poset
$$\xymatrix{\zeta_0
  \ar@{<-}[drrrr]%
    |<<<<<<<<<<<<{\text{\Large \textcolor{white}{$\blacksquare$}}}%
    |<<<<<<<<<<<<<<<<<<<<<{\text{\Large \textcolor{white}{$\blacksquare$}}}
    |>>>>>>>>>>>>>>>>>>>>>>>{\text{\Large \textcolor{white}{$\blacksquare$}}}
    |>>>>>>>>>>>>>>{\text{\Large \textcolor{white}{$\blacksquare$}}}
    |>>>>>>>>>>{\text{\Large \textcolor{white}{$\blacksquare$}}}%
& \zeta_1 & \ldots & \zeta_{N-2} & \zeta_{N-1} \\ \beta_0 \ar[u] \ar[ur] & \beta_1 \ar[u] \ar[ur] & \ldots \ar[ur] & \beta_{N-2} \ar[u] \ar[ur] & \beta_{N-1}. \ar[u] }$$
For a general diagram $X \in \M^{\C_N}$ we denote by
$$l_i \colon X_{\beta_i} \to X_{\zeta_i}, \;\;\;\;\;\;\; k_i \colon X_{\beta_{i-1}} \to X_{\zeta_i} \;\;\;\;\;\;\; i \in \Z/N\Z$$
the structure morphisms of $X$. 

Define $\L$ to be the full subcategory of $\Ho(\M^{C_N})$ consisting of those diagrams $X \in \Ho(\M^{C_N})$ which satisfy the following conditions:

\;

\;

\rm (i) The objects $X_{\beta_i}$ and $X_{\zeta_i}$ are cofibrant in $\M$ for any $i \in \Z/N\Z$;

\rm (ii) The graded $\pi_*S$-modules $\pi_*X_{\beta_i}$ and $\pi_*X_{\zeta_i}$ are objects of $\B[i]$ for any $i \in \Z/N\Z$;

\rm (iii) The map $\pi_*l_i \colon \pi_*X_{\beta_i} \to \pi_*X_{\zeta_i}$ is injective for any $i \in \Z/N\Z$.

\;

\;

In \cite[3.3]{Pat12} the category $\L$ is defined using only projectively cofibrant diagrams. The condition (i) above is weaker than being a cofibrant diagram. It allows us more flexibility with certain constructions in this paper. Since every diagram can be replaced up to weak equivalence by a projectively cofibrant diagram, this minor difference between the two definitions does not matter for the final outcome. 

Next we recall the definition of the functor $\Q \colon \L \to \Mod \pi_*S$. For any $i \in \Z/N\Z$ consider the mapping cone sequence
$$\xymatrix{X_{\beta_{i-1}} \ar[r]^{k_i} & X_{\zeta_i} \ar[r] & \Cone(k_i) \ar[r] & \Sigma X_{\beta_{i-1}}.}$$
This sequence induces a long exact sequence
$$ \ldots \to \pi_*(X_{\beta_{i-1}}) \to \pi_*(X_{\zeta_i})  \to \pi_*(\Cone(k_i)) \to \pi_*(X_{\beta_{i-1}})[1] \to \pi_*(X_{\zeta_i})[1] \to \ldots. $$
Since $X$ is an object of $\L$, it follows from the definition that $\pi_*(X_{\beta_{i-1}}) \in \B[i-1]$ and $\pi_*(X_{\zeta_i}) \in \B[i]$. Hence the maps $\pi_*(X_{\beta_{i-1}}) \to \pi_*(X_{\zeta_i})$ and $\pi_*(X_{\beta_{i-1}})[1] \to \pi_*(X_{\zeta_i})[1]$ vanish and we get a short exact sequence
$$0 \to \pi_*(X_{\zeta_{i}}) \to \pi_*(\Cone(k_i)) \to \pi_*(X_{\beta_{i-1}})[1] \to 0$$
of graded $\pi_*S$-modules for any $i \in \Z/N\Z$. After summing up we get a short exact sequence
$$0 \to \bigoplus_{i \in \Z/N\Z} \pi_*(X_{\zeta_{i}}) \to \bigoplus_{i \in \Z/N\Z} \pi_*(\Cone(k_i)) \to \bigoplus_{i \in \Z/N\Z} \pi_*(X_{\beta_{i}})[1] \to 0.$$ 
Using the map 
$$\bigoplus_{i \in \Z/N\Z} \pi_*l_i \colon \bigoplus_{i \in \Z/N\Z} \pi_*(X_{\beta_{i}}) \to \bigoplus_{i \in \Z/N\Z} \pi_*(X_{\zeta_{i}}),$$  
we can splice the latter short exact sequence with its shifted copy and get a differential 
$$d \colon \bigoplus_{i \in \Z/N\Z} \pi_*(\Cone(k_i)) \to \bigoplus_{i \in \Z/N\Z} \pi_*(\Cone(k_i))[1].$$ 
Define the functor $\Q \colon \L \to \Mod  \pi_*S$ by setting $\Q(X)= (\bigoplus_{i \in \Z/N\Z} \pi_*(\Cone(k_i)), d)$. 

Let $\K$ denote the full subcategory of $\L$ consisting of those objects $X$ which satisfy
$$\proj \pi_*X_{\beta_i} < N-1, \;\;\;\;\;\;\; \proj \pi_*X_{\zeta_i} < N-1$$
for all $i \in \Z/N\Z$. The following is proved in \cite[3.3]{Pat12} using the Adams spectral sequence (Corollary \ref{ASS}):

\begin{prop} \label{Qequi} Let $\M$ be a simplicial stable model category with a compact generator $S$ of $\Ho(\M)$. Suppose that the graded ring $\pi_*S=[S,S]_*$ is $N$-sparse for $N \geq 2$ and the graded global homological dimension of $\pi_*S$ is less than $N$. Then the restricted functor 
$$\Q\vert_{ \K} \colon \K \to \Mod  \pi_*S$$ 
is fully faithful and a differential graded module $(C,d)$ is in the essential image of $\Q\vert_{ \K}$ if and only if 
$$\proj \Ker d <N-1 \;\;\;\;\; \text{and} \;\;\;\;\; \proj \Imm d < N-1.$$ 
In particular, if $\gl \pi_*S$ is less than $N-1$, then $\Q \colon \L \to \Mod  \pi_*S$ is an equivalence of categories. 

\end{prop}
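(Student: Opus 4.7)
The plan is to apply the Adams-type spectral sequence of Corollary \ref{ASS} inside the diagram homotopy category $\Ho(\M^{\C_N})$. One first observes that $\M^{\C_N}$ inherits a stable simplicial model structure (with objectwise weak equivalences and projective cofibrations) and that $\Ho(\M^{\C_N})$ admits a finite set of compact generators, namely the left Kan extensions of $S$ along the inclusions of the vertices of $\C_N$. The graded endomorphism ring of this generating set is built combinatorially from $\pi_*S$ and the shape of $\C_N$, and its graded global homological dimension remains finite under the hypotheses. Corollary \ref{ASS} therefore yields a bounded convergent spectral sequence computing $[X, Y]_*$ in $\Ho(\M^{\C_N})$ in terms of $\Ext$-groups that, after identification, will match groups computed over $\pi_*S$.

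For full faithfulness of $\Q|_{\K}$, the strategy is to show that for $X, Y \in \K$ this spectral sequence collapses at $E_2$ onto the column computing $[X, Y]$ in degree zero. The $N$-sparseness of $\pi_*S$ and the block decomposition $\Grmod \pi_*S \sim \B \oplus \B[1] \oplus \cdots \oplus \B[N-1]$, combined with the projective-dimension bound $< N-1$ built into the definition of $\K$, force the relevant $E_2$-terms to vanish except in positions whose total degree matches $\Hom_{\Mod \pi_*S}(\Q X, \Q Y)$: any nontrivial $\Ext^p$ group would need a projective resolution of length at least $N-1$, contradicting the $\K$-hypothesis. A direct algebraic identification of the surviving $E_2$-entry with $\Hom_{\Mod \pi_*S}(\Q X, \Q Y)$, compatible with the edge map induced by $\Q$, then exhibits $\Q$ as a bijection on morphisms.

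For the essential image, given $(C, d)$ with $\proj \Ker d < N-1$ and $\proj \Imm d < N-1$, I would construct $X \in \K$ with $\Q(X) \cong (C, d)$ by an inductive procedure. Using condition (ii) of Proposition \ref{genASS} applied to $\M$, one first realizes the graded modules $\Ker d$ and $\Imm d$ (shifted according to the block decomposition) as homotopy groups of cofibrant objects $X_{\zeta_i}$ and $X_{\beta_{i-1}}$. The structure maps $l_i, k_i$ are then chosen so that the induced long exact sequence of $\Cone(k_i)$ recovers the short exact sequence $0 \to \pi_*(X_{\zeta_i}) \to \pi_*(\Cone(k_i)) \to \pi_*(X_{\beta_{i-1}})[1] \to 0$ used in the definition of $\Q$; the required lifts exist because the relevant obstructions lie in Adams $d_r$-groups that are killed by the sparseness and projective-dimension hypotheses. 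The main obstacle is precisely this obstruction analysis, where the numerical hypothesis of the proposition is used most sharply: the bound $\proj < N-1$ leaves just enough room inside $\Grmod \pi_*S$ for the chain of required lifts to proceed unobstructed, while the $N$-sparseness confines all nonzero contributions to a single diagonal in the spectral sequence. The final statement follows immediately: when $\gl \pi_*S < N-1$, every graded $\pi_*S$-module has projective dimension strictly below $N-1$, so $\K = \L$ and every dg-module $(C, d)$ automatically satisfies the realization conditions, yielding the claimed equivalence $\Q \colon \L \xrightarrow{\sim} \Mod \pi_*S$.
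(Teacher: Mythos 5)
Your high-level strategy — running the Adams spectral sequence in the diagram homotopy category $\Ho(\M^{\C_N})$ and exploiting $N$-sparseness together with the projective-dimension bound built into $\K$ — is the same one used in the source the paper cites, namely [Pat12, Section~3.3] (the paper itself gives no proof, only the citation). However, the way you argue degeneration is not correct, and the error obscures the actual content of full faithfulness.

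The nonzero entries $E_2^{pq}=\Ext^p_{(\Grmod\pi_*S)^{\C_N}}(\pi_*X,\pi_*Y[q])$ on the anti-diagonal computing $[X,Y]$ do \emph{not} reduce to a single position. Because the map $\pi_*k_i$ vanishes (it connects distinct blocks), $\pi_*X$ splits as $\bigoplus_{i}(\pi_*X)_i$ with $(\pi_*X)_i\in(\B[i])^{\C_N}$ supported only at the vertices $\beta_i,\zeta_i$; a projective resolution of $(\pi_*X)_i$ in $(\B[i])^{\C_N}$ spreads its support to $\beta_i,\zeta_i,\zeta_{i+1}$, so $\Ext^p(\pi_*X,\pi_*Y[q])$ vanishes unless $q\equiv 0$ or $q\equiv -1\bmod N$. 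On the anti-diagonal this forces $p\equiv 0$ or $p\equiv 1 \bmod N$; combined with Mitchell's bound $\proj_{(\Grmod\pi_*S)^{\C_N}}\pi_*X\le\dimm\C_N+(N-2)=N-1$ (so $\Ext^p=0$ for $p\geq N$), both $p=0$ and $p=1$ survive. One then checks that all $d_r$, $r\geq 2$, touching these positions vanish by the same congruence argument. So $[X,Y]$ carries a genuine two-step filtration with graded pieces $\Hom_{(\Grmod\pi_*S)^{\C_N}}(\pi_*X,\pi_*Y)$ and $\Ext^1_{(\Grmod\pi_*S)^{\C_N}}(\pi_*X,\pi_*Y[-1])$. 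Your justification ``any nontrivial $\Ext^p$ group would need a projective resolution of length at least $N-1$'' would, if correct, kill the $p=1$ term, and the conclusion ``collapses onto the column in degree zero'' and ``the surviving $E_2$-entry'' (singular) are therefore wrong.

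Correspondingly, what you call a ``direct algebraic identification'' with $\Hom_{\Mod\pi_*S}(\Q X,\Q Y)$ is the substantive step: a dg-module morphism $\Q X\to\Q Y$ preserves the subobject $\ker d=\bigoplus_i\pi_*Y_{\zeta_i}$, hence induces a $\C_N$-diagram morphism $\pi_*X\to\pi_*Y$, and the set of lifts of such a diagram morphism is a torsor under $\bigoplus_i\Hom_{\Grmod\pi_*S}(\pi_*X_{\beta_{i-1}}[1],\pi_*Y_{\zeta_i})$, which one computes is precisely $\Ext^1_{(\Grmod\pi_*S)^{\C_N}}(\pi_*X,\pi_*Y[-1])$. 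Full faithfulness amounts to matching this two-step filtration with the one from the spectral sequence compatibly with the edge map, and showing that the extension-class obstruction to lifting vanishes — none of which is addressed in your sketch. Your essential-image argument is plausible in outline (realize $\Ker d$ and $\Imm d$ as homotopy of objects, then lift the structure maps), but it is too vague to verify and in particular does not say which obstruction groups arise and why they vanish under the stated hypotheses.
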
 

Using this proposition we can once and for all choose an inverse $\Q^{-1} \colon \Q(\K) \to \K$. Denote by $\R'$ the composite
$$\xymatrix{ \Q(\K) \ar[rr]^-{(\Q\vert_{ \K})^{-1}} & & \K \subseteq \Ho(\M^{\C_N}) \ar[rr]^-{\Hocolim} & & \Ho(\M).}$$
By \cite[3.4]{Pat12} one has a natural isomorphism $\pi_* \circ \R' \cong H_*$. Further note that by \cite[2.2.5]{H97} every cofibrant differential graded $\pi_*S$-module is underlying projective graded module and hence $\Q(\K)$ contains all cofibrant objects. Altogether we conclude that $\R'$ gives rise to a functor
$$\R \colon \D(\pi_*S) \to \Ho(\M)$$
with a natural isomorphism $\pi_* \circ \R \cong H_*$. The main results of \cite{Pat12} show that the following hold:

\begin{itemize}

\item If $\gl \pi_*S=1$ and $N \geq 2$, then the functor $\R \colon \D(\pi_*S) \to \Ho(\M)$ is an equivalence of categories;

\item If $\gl \pi_*S=2$ and $N \geq 4$, then the functor $\R \colon \D(\pi_*S) \to \Ho(\M)$ is an equivalence of categories. 

\end{itemize}

\section{Equivalences of diagram categories and partial morphisms of derivators}

In this section we extend the results of \cite{Pat12}, recalled above, to diagram categories. We define the functor $\R$ for certain diagram categories and prove that in some cases it is an equivalence. We also show that these functors are compatible with change of diagrams. 

\subsection{The functor $\R$ for diagram categories} 

Let $\M$ be a stable model category and $S$ a compact generator of $\Ho(\M)$. Let $\P$ be a finite poset. Since the category $\P$ is both \emph{direct} and \emph{inverse} in the sense of \cite[Section 5.1]{H99}, the functor category $\M^\P$ admits the \emph{projective} and \emph{injective} model structures \cite[Theorem 5.1.3]{H99}. The weak equivalences in both model structures are the levelwise weak equivalences. The fibrations in the projective model structure are levelwise fibrations and the dual is the case for the injective model structure. For an explicit description of the projective cofibrations and injective fibrations see e.g., \cite[Section 5.1]{H99}. It follows from the definitions that the identity functor 
$$\id \colon \M^\P \to \M^\P$$
is a left Quillen functor from the projective to the injective model structure. Moreover, the identity adjunction is a Quillen equivalence. 

The functor $\pi_*=[S,-]_* \colon \M \to \Grmod \pi_*S$ induces a functor
$$\pi_* \colon \M^{\P} \to (\Grmod \pi_*S)^{\P}.$$
This functor sends weak equivalences to isomorphisms. Hence it gives rise to a homological functor on the homotopy category
$$\pi_* \colon \Ho(\M^{\P}) \to (\Grmod \pi_*S)^{\P}$$
which reflects isomorphisms. The category $(\Grmod \pi_*S)^{\P}$ is an abelian category with enough projectives (see e.g., \cite[Section 2.3]{W94}). 

The dimension $\dimm \P$ of $\P$ is defined to be the dimension of its geometric realization which coincides with the maximal length of a chain in $\P$.  It follows from \cite{Mit68} that the global homological dimension of the abelian category $(\Grmod \pi_*S)^{\P}$ is at most $\dimm \P+ \gl \pi_*S$. In particular if $\gl \pi_*S$ is finite, then the global homological dimension of $(\Grmod \pi_*S)^{\P}$ is also finite. Further we have the levelwise shift functor $[1] \colon (\Grmod \pi_*S)^{\P} \to (\Grmod \pi_*S)^{\P}$ which is an equivalence. Now using the general description of projective objects for diagram abelian categories (see e.g., \cite[Section 2.3]{W94}) and the standard idempotent splitting argument (see e.g., \cite[Proposition A.4]{BKS04}), we see that the homological functor $\pi_* \colon \Ho(\M^{\P}) \to (\Grmod \pi_*S)^{\P}$ and the latter shift satisfy the conditions of Proposition \ref{genASS}. This gives an Adams spectral sequence for computing abelian groups of morphisms in $\Ho(\M^{\P})$. 

Next, assume that $\pi_*S$ is concentrated in dimensions divisible by a natural number $N \geq 2$ and $\dimm \P + \gl \pi_*S <N$ holds. Exactly as in Subsection \ref{introFranke}, the category $(\Grmod \pi_*S)^{\P}$ splits as follows 
$$(\Grmod{\pi_*S})^{\P} \sim \B^{\P} \oplus \B^{\P}[1] \oplus \ldots \oplus \B^{\P}[N-1],$$ 
where $\B^{\P}$ is the full subcategory of $(\Grmod{\pi_*S})^{\P}$ consisting of all those $\P$-diagrams which are levelwise concentrated in degrees divisible by $N$. Note that by definition the category $\B^{\P}[i]$ is equal to the diagram category $(\B[i])^{\P}$. Now the arguments recalled in \ref{introFranke} (see  Subsections 3.3 and 3.4 of \cite{Pat12} for more details) can be applied mutatis mutandis to $\M^{\P}$ and hence we can construct the functor $\R \colon \Ho((\Mod \pi_*S)^{\P}) \to \Ho(\M^{\P})$. More precisely, let $\L^{\P}$ denote the full subcategory of $\Ho(\M^{\P \times \C_N})$ consisting of levelwise cofibrant objects which after evaluating at any element of $\P$ satisfy the conditions (ii) and (iii) from the definition of $\L$ in Subsection \ref{introFranke}. One defines a functor $\Q^{\P} \colon \L^{\P} \to (\Mod \pi_*S)^{\P}$ by levelwise repeating the construction of $\Q$. We can also consider the full subcategory $\K^{\P}$ of $\L^{\P}$ which is analogous to $\K$. However, for simplicity from now on we will assume that $\dimm \P + \gl \pi_*S < N-1$. Then $\K^{\P}=\L^{\P}$ and the same argument as in \cite[3.3]{Pat12}, using the Adams spectral sequence for $\pi_* \colon \Ho(\M^{\P}) \to (\Grmod \pi_*S)^{\P}$ (Proposition 2.3.1), shows that the functor
$$\Q^{\P} \colon \L^{\P} \to (\Mod  \pi_*S)^{\P}$$
is an equivalence of categories. We choose an inverse $(\Q^{\P})^{-1}$ once and for all so that 
$$((\Q^{\P})^{-1}, \Q^{\P})$$ 
is an adjoint equivalence of categories with $(\Q^{\P})^{-1}$ being the left adjoint. Further, the composite
$$\xymatrix{ (\Mod  \pi_*S)^{\P} \ar[rr]^-{(\Q^{\P})^{-1}} & & \L^{\P} \subseteq \Ho(\M^{\P \times \C_N}) \ar[rr]^-{\Hocolim} & & \Ho(\M^{\P})}$$  
induces the functor $\R \colon \Ho((\Mod \pi_*S)^{\P}) \to \Ho(\M^{\P})$ and there is a natural isomorphism $\pi_* \circ \R \cong H_*$. For any $\P$, we keep the same symbol $\R$ for denoting the latter functor in order to simplify notation. 

Finally, another verbatim translation of the arguments from \cite[Section 6]{Pat12} shows that the following holds:

\begin{theo} \label{diagequivcat} Suppose we are given the following data: A (simplicial) stable model category $\M$, a compact generator $S$ of $\Ho(\M)$ and a finite poset $\P$. Assume that the graded ring $\pi_*S=[S,S]_*$ is $N$-sparse for $N \geq 4$ and $\dimm \P + \gl \pi_*S \leq 2$. Then the functor
$$\R \colon \Ho((\Mod \pi_*S)^{\P}) \to \Ho(\M^{\P})$$ 
is an equivalence of categories.

\end{theo}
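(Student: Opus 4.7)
The plan is to mirror \cite[Section 6]{Pat12} verbatim, now applied to the diagram category $\M^{\P}$. By the discussion just before the theorem, $\Q^{\P} \colon \L^{\P} \to (\Mod \pi_*S)^{\P}$ is already known to be an equivalence, and $\R = \Hocolim \circ (\Q^{\P})^{-1}$ is well defined on $\Ho((\Mod \pi_*S)^{\P})$ with a natural isomorphism $\pi_* \circ \R \cong H_*$. It therefore suffices to establish that $\R$ is fully faithful and essentially surjective. The key technical input on both sides is the Adams spectral sequence of Proposition \ref{genASS}, applied to the homological functors $\pi_* \colon \Ho(\M^{\P}) \to (\Grmod \pi_*S)^{\P}$ and $H_* \colon \Ho((\Mod \pi_*S)^{\P}) \to (\Grmod \pi_*S)^{\P}$, both of which are known to satisfy the hypotheses of \ref{genASS} by the standing assumptions and the identification of projectives in diagram abelian categories. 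Note that by Mitchell's bound, $(\Grmod \pi_*S)^{\P}$ has global homological dimension at most $\dimm \P + \gl \pi_*S \leq 2$, which is strictly less than $N-1$ because $N \geq 4$.

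For full faithfulness, I would proceed by spectral sequence comparison. For $M, M' \in \Ho((\Mod \pi_*S)^{\P})$, the algebraic Adams spectral sequence has $E_2^{p,q} = \Ext^p_{(\Grmod \pi_*S)^{\P}}(H_*M, H_*M'[q])$ converging to $\Hom_{\Ho((\Mod \pi_*S)^{\P})}(M, \Sigma^{p+q}M')$, and the topological one has $E_2^{p,q} = \Ext^p_{(\Grmod \pi_*S)^{\P}}(\pi_*\R M, \pi_*\R M'[q])$ converging to $\Hom_{\Ho(\M^{\P})}(\R M, \Sigma^{p+q}\R M')$. The natural isomorphism $\pi_* \circ \R \cong H_*$ identifies the $E_2$-pages, and $\R$ induces a morphism of spectral sequences realizing this identification. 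Both spectral sequences are strongly convergent and concentrated in a finite strip because the global homological dimension is at most $2$. The classical comparison theorem then yields that $\R$ induces isomorphisms on all morphism groups.

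For essential surjectivity, given $X \in \Ho(\M^{\P})$, I would first realize $\pi_*X \in (\Grmod \pi_*S)^{\P}$ as the homology of a differential graded $\P$-diagram $M$: since the global homological dimension of the target abelian category is at most $2$, one can build $M$ from a projective resolution of $\pi_*X$ of length at most two. Then $\pi_*\R M \cong H_*M \cong \pi_*X$. To lift this isomorphism to a weak equivalence $\R M \xrightarrow{\sim} X$, I would use the Adams spectral sequence computing $\Hom_{\Ho(\M^{\P})}(\R M, X)$: the edge homomorphism to $\Hom_{(\Grmod \pi_*S)^{\P}}(\pi_*\R M, \pi_*X)$ is surjective because all potential obstructions live in $E_2^{p,-p}$ for $p \geq 1$ and, in the range we are working, these cannot support higher differentials out of or into the $\Ext^0$ line obstructing lifts. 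A chosen preimage of the iso yields the desired map, which is a weak equivalence because $\pi_*$ detects isomorphisms on $\Ho(\M^{\P})$.

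The main obstacle, and the reason the result is nontrivial despite being a ``verbatim translation'', is the bookkeeping required to ensure that the Adams spectral sequence for the diagram categories really behaves as cleanly as in the single-object case: one has to verify carefully that projectives in $(\Grmod \pi_*S)^{\P}$ admit lifts through both $\pi_*$ and $H_*$, that the dimension estimate $\dimm \P + \gl \pi_*S \leq 2$ suppresses all obstructions to lifting isomorphisms of homotopy/homology to weak equivalences, and that the sparseness hypothesis $N \geq 4$ is exploited consistently via the splitting of $(\Grmod \pi_*S)^{\P}$ into $\B^{\P}[i]$. Once these are in hand, the proof is formally identical to that of the single-object case in \cite[Section 6]{Pat12}.
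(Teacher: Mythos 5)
Your proposal correctly identifies that the theorem is obtained by translating the arguments of \cite[Section 6]{Pat12} to the diagram categories $\M^{\P}$, $(\Mod \pi_*S)^{\P}$, and that the Adams spectral sequence of Proposition \ref{genASS} for $\pi_*$ and $H_*$ on these diagram categories, together with Mitchell's dimension bound, supplies the technical input. That much agrees with the paper.

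However, the specific mechanism you propose for full faithfulness has a genuine gap. You write that ``$\R$ induces a morphism of spectral sequences realizing this identification'' of $E_2$-pages, and then invoke a comparison theorem. This is precisely what cannot be asserted at this point. The Adams spectral sequence for $\Hom_{\Ho(\M^{\P})}(\R M, \Sigma^{*}\R M')$ is built from an Adams resolution in $\Ho(\M^{\P})$, i.e., a tower of exact triangles whose layers have projective $\pi_*$. For $\R$ to carry an Adams resolution in $\Ho((\Mod\pi_*S)^{\P})$ to one in $\Ho(\M^{\P})$ — which is what ``induces a morphism of spectral sequences'' would require — $\R$ would have to be compatible with exact triangles. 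But at the stage of Theorem \ref{diagequivcat} no triangulated compatibility is available; establishing it is exactly the content of the later Section \ref{trsec}, and even there only under the strictly stronger hypothesis $\gl\pi_*S=1$. The arguments of \cite[Section 6]{Pat12} in the case $\gl=2$, $N\ge 4$ do not proceed by a direct morphism-of-spectral-sequences comparison; they involve a considerably more delicate analysis of the Adams filtration, realizing morphisms in filtration stages, and explicit control of the interaction of $\R$ (via its construction through $\Q^{-1}$ and $\Hocolim$) with those filtrations, precisely because no exactness of $\R$ is known a priori. Asserting the spectral sequence comparison without that machinery sweeps the central difficulty under the rug.

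A secondary issue: in the essential surjectivity step you state that the edge homomorphism is surjective ``because all potential obstructions live in $E_2^{p,-p}$ for $p\ge1$.'' The obstruction to surjectivity of the edge map $[\R M, X]\to \Hom(\pi_*\R M,\pi_*X)$ is the image of the differential $d_2\colon E_2^{0,0}\to E_2^{2,-1}$, which lives on the line $p+q=1$, not on $p+q=0$; one must argue that this differential vanishes on the relevant class. This is handled in \cite{Pat12} by the explicit construction of the lifting diagram in $\K$, not by a vanishing-for-degree-reasons argument.
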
 

\subsection{Compatibility with change of diagrams and the derivator structure} \label{paste}

In this subsection we show that the functors $\R$ are compatible with change of diagrams. Suppose we are given an order preserving map $u \colon \P_1 \to \P_2$ of posets. Then the pullback functor
$$u^* \colon M^{\P_2} \to M^{\P_1}$$
preserves weak equivalences. Further it has a left adjoint $u_{!} \colon M^{\P_1} \to M^{\P_2}$ and a right adjoint $u_* \colon M^{\P_1} \to M^{\P_2}$, called the \emph{left and right Kan extensions}. Since the functor $u^*$ preserves lewelvise notions, it follows that $(u_{!}, u^*)$ is a Quillen adjunction with respect to the projetive model structures and $(u^*, u_*)$ is a Quillen adjunction with respect to the injective model structures. These adjunctions pass to homotopy categories and we denote the derived functors with the same symbols as the point set level functors. 

Before we discuss the compatibility of $\R$ with latter change of diagram functors, we need to briefly review \emph{calculus of mates} \cite{KS74} (see also \cite[Section 1.2]{Gro13}).   
 
Consider a diagram
$$\xymatrix{
\AA    \xtwocell[1,1]{}\omit & \BB \ar[l]_{v} \\
\CC \ar[u]^{u_1}  & \DD \ar[l]^{w} \ar[u]_{u_2}
}$$
of categories. The arrow in the middle of the square indicates that we are given a natural transformation $\alpha \colon v u_2  \to u_1 w$. Suppose ${u_i}_{!}$ is left adjoint to $u_i$, $i=1,2$. Then $\alpha$ induces a natural transformation
$$\alpha_! \colon {u_1}_! v \to {u_1}_! v u_2 {u_2}_! \to {u_1}_! u_1 w {u_2}_! \to w {u_2}_!,$$
where the middle map is ${u_1}_! \alpha {u_2}_!$ and the first and last map are induced by the unit of $({u_2}_!, u_2)$ and the counit of $({u_1}_!, u_1)$, respectively. The map $\alpha_!$ is referred to as one of the possible two \emph{mates} of $\alpha$ (the other one is defined using right adjoints of $v$ and $w$ in case they exist). This construction is compatible with pasting of natural transformations. More precisely, suppose we are given a diagram of categories
$$\xymatrix{
\AA    \xtwocell[1,1]{}\omit & \BB \xtwocell[1,1]{}\omit \ar[l]_{v} &  \EE \ar[l]_{v'} \\
\CC \ar[u]^{u_1}  & \DD \ar[l]^{w} \ar[u]_{u_2} & \FF, \ar[l]^{w'} \ar[u]_{u_3}
}$$
where the arrows inside the squares are natural transformations 
$$\alpha \colon v u_2  \to u_1 w \;\;\;\; \text{and} \;\;\;\; \alpha' \colon v' u_3  \to u_2 w'.$$ 
The \emph{pasting} of these natural transformations, denoted by $\alpha \odot \alpha'$, is defined as the composite
$$\alpha \odot \alpha' \colon vv'u_3 \xto{v\alpha'} vu_2w' \xto{\alpha w'} u_1ww'.$$
Now suppose for $i=1,2, 3$, the functor $u_i$ has a left adjoint ${u_i}_{!}$. Then $(\alpha \odot \alpha')_!=\alpha'_! \odot \alpha_!$ (see e.g., \cite[Lemma 1.14]{Gro13}). Moreover, the pasting operation is associative and the vertical pasting and horizontal pasting commute with each other (see e.g., \cite{Ehr63, Ehr65, GP99}).  

In what follows we will use certain structure on the collection of diagram categories 
$$\P \mapsto \Ho(\M^{\P}).$$ 
If $\M$ is a \emph{combinatorial model category}, then the assignment 
$$\C \mapsto \Ho(\M^{\C}),$$
where $\C$ is any small category, defines a \emph{derivator} (see \cite{Cis03} and \cite[Proposition 1.30]{Gro13}). The structure of a derivator axiomatizes the derived change of diagram functors and the existence of the homotopy (derived) left and right Kan extensions. We will assume familiarity with the theory of derivators. Sources on derivators include \cite{F96, Gro13, Mal07}.

In this paper we only need the portion of the derivator structure which is indexed on finite posets. The advantage when working with posets is that one does not have to assume anything additional on the model category $\M$ to have homotopy Kan extensions 
$$u_{!} \colon \Ho(M^{\P_1}) \to \Ho(M^{\P_2}), \;\;\;\;\;\;\; u_* \colon \Ho(M^{\P_1}) \to \Ho(M^{\P_2})$$ 
for a map $u \colon \P_1 \to \P_2$ of finite posets. Since the category of finite posets satisfies the axioms of a \emph{category of diagrams} in the sense of \cite{CN08}, the theory from \cite{Gro13} applies to the derivator $\P \mapsto \Ho(\M^{\P})$ defined on finite posets (see \cite[End of Section 1.1]{Gro13}). 

Now we start establishing some compatibility results of the funtors $\R$ with change of diagrams. Let $\M$ be a (simplicial) stable model category, $S$ a compact generator of $\Ho(\M)$ and assume that the graded ring $\pi_*S=[S,S]_*$ is $N$-sparse. Further, let $\P_1$ and $\P_2$ be finite posets such that $\dimm \P_i + \gl \pi_*S < N-1$, $i=1,2$, and $u \colon \P_1 \to \P_2$ an order preserving map of posets. Then by construction we have the identity natural transformation $\id_u \colon u^*\Q^{\P_2} = \Q^{\P_1} (u \times 1)^*$. Schematically we have the diagram
$$\xymatrix{
(\Mod \pi_*S)^{\P_1}  & \L^{\P_1} \ar[l]_-{\Q^{\P_1}} \\
(\Mod \pi_*S)^{\P_2} \ar[u]^{u^*}   & \L^{\P_2}, \xtwocell[-1,-1]{}\omit \ar[l]^-{\Q^{\P_2}} \ar[u]_{(u \times 1)^* }
}$$
where the middle arrow is the identity natural transformation (i.e., the diagram commutes). Since by our choice $(\Q^{\P_i})^{-1}$ is left adjoint to $\Q^{\P_i}$, $i=1,2$, we get a mate natural transformation $(\id_u)_! \colon (\Q^{\P_1})^{-1} u^* \to (u \times 1)^* (\Q^{\P_2})^{-1}$. This natural transformation is a natural isomorphism since $((\Q^{\P_i})^{-1}, \Q^{\P_i})$ is an adjoint equivalence of categories, $i=1,2$, and $\id_u$ is an isomorphism. (Note that even if $\id_u$ is the identity, the natural transformation $(\id_u)_!$ in general is not the identity.) By pasting
$$\xymatrix{
(\Mod \pi_*S)^{\P_1} \ar[rr]^-{(\Q^{\P_1})^{-1}}   & & \L^{\P_1}  \ar[r]^-{\Hocolim}  &  \Ho(\M^{\P_1})  \\
(\Mod \pi_*S)^{\P_2} \ar[rr]_-{(\Q^{\P_2})^{-1}} \xtwocell[-1,1]{}\omit   \ar[u]^{u^*}  & & \L^{\P_2} \ar[r]^-{\Hocolim}  \xtwocell[-1,1]{}\omit \ar[u]^{(u\times 1)^*} &   \Ho(\M^{\P_2})  \ar[u]_{u^*}
}$$
%$$  
and then passing to derived categories, one gets a natural transformation 
$$\gamma_u \colon \R u^* \to u^*\R.$$ 

\begin{prop} \label{change of dia} Let $\M$ be a (simplicial) stable model category, $S$ a compact generator of $\Ho(\M)$ and assume that the graded ring $\pi_*S=[S,S]_*$ is $N$-sparse. Further let $\P_1$ and $\P_2$ be finite posets such that $\dimm \P_i + \gl \pi_*S < N-1$, $i=1,2$ and $u \colon \P_1 \to \P_2$ an order preserving map of posets. Then the natural transformation $\gamma_u \colon \R u^* \to u^*\R$ is a natural isomorphism.

\end{prop}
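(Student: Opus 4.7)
The plan is to decompose $\gamma_u$ as the pasting of two $2$-cells and show that each is already a natural isomorphism; since whiskering and pasting preserve natural isomorphisms, $\gamma_u$ will then be a natural isomorphism as well. By construction $\gamma_u$ comes from two $2$-cells: the mate $(\id_u)_!$ of the identity $\id_u \colon u^*\Q^{\P_2} = \Q^{\P_1}(u \times 1)^*$ on the left, and the Beck--Chevalley transformation for the $\Hocolim$ squares on the right.

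For the left $2$-cell, $(\id_u)_!$ is by definition the composite
$$(\Q^{\P_1})^{-1}u^* \xto{\eta} (\Q^{\P_1})^{-1}u^*\Q^{\P_2}(\Q^{\P_2})^{-1} = (\Q^{\P_1})^{-1}\Q^{\P_1}(u \times 1)^*(\Q^{\P_2})^{-1} \xto{\varepsilon} (u \times 1)^*(\Q^{\P_2})^{-1},$$
where $\eta$ is the unit of $((\Q^{\P_2})^{-1}, \Q^{\P_2})$ and $\varepsilon$ the counit of $((\Q^{\P_1})^{-1}, \Q^{\P_1})$. Since by construction both adjunctions are adjoint equivalences, $\eta$ and $\varepsilon$ are natural isomorphisms; hence so is $(\id_u)_!$.

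For the right $2$-cell, write $p_{\P} \colon \P \times \C_N \to \P$ for the projection, so that $\Hocolim = (p_{\P})_!$. The $2$-cell is then the Beck--Chevalley transformation $(p_{\P_1})_!(u \times 1)^* \to u^*(p_{\P_2})_!$ associated to the Cartesian square
$$\xymatrix{ \P_1 \times \C_N \ar[r]^-{p_{\P_1}} \ar[d]_{u \times 1} & \P_1 \ar[d]^{u} \\ \P_2 \times \C_N \ar[r]_-{p_{\P_2}} & \P_2 }$$
of finite posets. The map $p_{\P_2}$ is a split Grothendieck opfibration with constant fibre $\C_N$, so this square is homotopy exact in the derivator $\P \mapsto \Ho(\M^{\P})$; equivalently, the pointwise formula for homotopy left Kan extensions along a projection from a product yields $((p_{\P})_!X)(y) \cong \Hocolim_{\C_N} X(y,-)$, whence the Beck--Chevalley map evaluates pointwise to the identity of $\Hocolim_{\C_N} X(u(y),-)$.

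Combining both steps, and noting that $(u \times 1)^*$ takes $\L^{\P_2}$ into $\L^{\P_1}$ (the cofibrancy condition (i) and the conditions (ii)--(iii) in the definition of $\L^{\P_i}$ are all checked pointwise in $\P_i$), the pasted $2$-cell is a natural isomorphism; passing to derived categories produces $\gamma_u$ and the claim follows. The one genuinely non-formal input is the Beck--Chevalley assertion for the right $2$-cell, and that is where the derivator formalism on finite posets is invoked.
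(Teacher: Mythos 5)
The proposal is correct and takes essentially the same approach as the paper: both decompose $\gamma_u$ into the mate $(\id_u)_!$ (automatically an isomorphism since $((\Q^{\P_i})^{-1}, \Q^{\P_i})$ are adjoint equivalences, as the paper establishes just before the proposition) and the canonical $2$-cell $\Hocolim\,(u\times 1)^* \to u^*\,\Hocolim$, and both then observe that the latter is an isomorphism. You unpack the second step into the Beck--Chevalley / homotopy-exactness argument for the pullback square along the opfibration $p_{\P_2}$, whereas the paper simply cites Groth's observation that $\Hocolim$ is a morphism of derivators; these are the same argument at different levels of detail.
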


\begin{proof} It suffices to show that the canonical natural transformation
$$\xymatrix{
\Ho(\M^{\P_1 \times \C_N}) \ar[r]^{\hspace{0.3cm} \Hocolim}   & \Ho(\M^{\P_1}) \\
\Ho(\M^{\P_2 \times \C_N}) \ar[r]^{\hspace{0.3cm} \Hocolim}  \ar[u]^{(u \times 1)^*} \xtwocell[-1,1]{}\omit & \Ho(\M^{\P_2}) \ar[u]_{u^*}
}$$
is a natural isomorphism. This follows from definitions or for example from \cite[Example 2.10]{Gro13} which shows that $\Hocolim \colon \Ho(\M^{(-) \times \C_N}) \to \Ho(\M^{(-)})$ gives a morphism of derivators. \end{proof}

\begin{prop} \label{morphism of der} Let $\M$ be a (simplicial) stable model category, $S$ a compact generator of $\Ho(\M)$ and assume that the graded ring $\pi_*S=[S,S]_*$ is $N$-sparse. 

{\rm (i)} Suppose $\P_1$, $\P_2$ and $\P_3$ are finite posets such that $\dimm \P_i + \gl \pi_*S < N-1$, $i=1,2,3$, and $u \colon \P_1 \to \P_2$ and $v \colon \P_2 \to \P_3$ are order preserving maps of posets. The the diagram
$$\xymatrix{\R (vu)^*=\R u^* v^* \ar[r]^-{\gamma_u  v^*} \ar[dr]_-{\gamma_{vu}} & u^* \R v^* \ar[d]^-{u^* \gamma_v} \\ & u^* v^* \R= (vu)^* \R }$$
commutes. 

{\rm (ii)} Suppose $\P_1$ and $\P_2$ are finite posets such that $\dimm \P_i + \gl \pi_*S < N-1$, $i=1,2$, and $u, w \colon \P_1 \to \P_2$ are order preserving maps of posets. Let $\mu \colon u \leq w$ be a natural transformation. Then the diagram 
$$\xymatrix{\R u^* \ar[r]^{\R \mu^*} \ar[d]_{\gamma_u} & \R w^* \ar[d]^{\gamma_w} \\ u^* \R \ar[r]^{\mu^* \R} & w^* \R }$$
commutes.

\end{prop}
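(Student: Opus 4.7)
The plan is to derive both parts from the compatibility properties of the mate construction recalled in this subsection, together with the fact that $\Hocolim$ defines a morphism of derivators. Recall that $\gamma_u$ is by construction the horizontal paste of two 2-cells: the mate $(\id_u)_!$ of the equality $u^*\Q^{\P_2}=\Q^{\P_1}(u\times 1)^*$, and the canonical derivator 2-cell $\Hocolim\circ(u\times 1)^*\xrightarrow{\cong}u^*\circ\Hocolim$ used in Proposition \ref{change of dia}. Both pieces are functorial in $u$, and the proofs of (i) and (ii) consist of combining these two functorialities.

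For (i), the identity 2-cell associated to the $(vu)$-square is literally the horizontal paste of the identity 2-cells associated to the $u$- and $v$-squares, so the formula $(\id_u\odot\id_v)_!=(\id_v)_!\odot(\id_u)_!$ recalled above shows that the $\Q$-piece of $\gamma_{vu}$ equals the corresponding paste of the $\Q$-pieces of $\gamma_u$ and $\gamma_v$. Since $\Hocolim$ is a morphism of derivators, the canonical 2-cells associated to $u$, $v$, and $vu$ satisfy the analogous cocycle identity. Using the associativity of horizontal pasting and the commutation of horizontal and vertical pasting, one combines the two identities to conclude $\gamma_{vu}=(u^*\gamma_v)\circ(\gamma_u v^*)$.

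For (ii), I would invoke the naturality of the mate construction with respect to 2-cells between the vertical arrows of a commutative square: given two commutative squares with the same horizontal arrows whose vertical arrows are related by compatible 2-cells, the induced mates are intertwined by those 2-cells. In our setting the two squares are the $u$-square and the $w$-square, the common horizontal arrows are $\Q^{\P_1}$ and $\Q^{\P_2}$, and the vertical 2-cells are $\mu^*\colon u^*\Rightarrow w^*$ and $(\mu\times 1)^*\colon(u\times 1)^*\Rightarrow (w\times 1)^*$. This yields the required commutativity for the $\Q$-piece. The analogous naturality for the $\Hocolim$ piece is part of the morphism-of-derivators structure for $\Hocolim$. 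Pasting the two resulting naturality squares horizontally produces the commutative square claimed in (ii).

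The step that will require the most care is tracking variance and composition order in the pasting calculations, rather than any new homotopical input. Conceptually, the content of this proposition together with Proposition \ref{change of dia} is that $\R$ assembles into a strict morphism of partial derivators on the portion of the derivator structure where our finiteness and sparsity hypotheses are satisfied.
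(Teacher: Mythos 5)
Your proposal is correct and matches the paper's proof essentially step for step: both decompose $\gamma_u$ as the paste of the mate $(\id_u)_!$ with the $\Hocolim$ comparison 2-cell, then for (i) use compatibility of mates with pasting, the cocycle identity for $\Hocolim$ as a morphism of derivators, and the interchange of the two pasting directions, while for (ii) they use the naturality of the mate construction with respect to 2-cells together with the corresponding naturality for $\Hocolim$. The only cosmetic difference is orientation: the paper draws the $u$- and $v$-squares stacked vertically and so calls their combination a vertical paste, whereas you call it horizontal, but this is merely a choice of how to draw the diagram and does not affect the argument.
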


\begin{proof} (i) Consider the diagram 
$$\xymatrix{
(\Mod \pi_*S)^{\P_1} \ar[rr]^-{(\Q^{\P_1})^{-1}} &  & \L^{\P_1}  \ar[r]^-{\Hocolim}  &  \Ho(\M^{\P_1})  \\
(\Mod \pi_*S)^{\P_2} \ar[rr]^-{(\Q^{\P_2})^{-1}} \xtwocell[-1,1]{}\omit   \ar[u]^{u^*} & & \L^{\P_2} \ar[r]^-{\Hocolim}  \xtwocell[-1,1]{}\omit \ar[u]^{(u\times 1)^*} &   \Ho(\M^{\P_2})  \ar[u]_{u^*}\\ (\Mod \pi_*S)^{\P_3} \ar[rr]^-{(\Q^{\P_3})^{-1}} \xtwocell[-1,1]{}\omit   \ar[u]^{v^*} & & \L^{\P_3} \ar[r]^-{\Hocolim}  \xtwocell[-1,1]{}\omit \ar[u]^{(v\times 1)^*} &   \Ho(\M^{\P_3}).  \ar[u]_{v^*}
}$$
Since pasting is compatible with taking mates, the pasted natural transformation  $(\id_v)_! \odot (\id_u)_!$ is equal to $(\id_{vu})_!$. Next, the right vertical pasting of the canonical natural transformations in the above diagram gives the canonical natural transformation 
$$\Hocolim ((vu \times 1)^*) \to (vu)^* \Hocolim.$$ 
This follows by a similar argument as the latter one or using the fact that 
$$\Hocolim \colon \Ho(\M^{(-) \times \C_N}) \to \Ho(\M^{(-)})$$ 
is a morphism of derivators \cite[Example 2.10]{Gro13}.

The arguments above show that by first vertically pasting and then horizontally we get the natural transformation $\gamma_{vu}$ (after passing to the derived categories). On the other hand, by definition, if we first horizontally paste and then vertically we get the natural transformation $u^* \gamma_v \circ \gamma_u  v^*$ (after passing to the derived categories). But since it does not matter in which order we paste for the final outcome, one gets the desired result.

(ii) Again by definition the diagram
$$\xymatrix{u^*\Q^{\P_2} \ar@{=}[d]_{\id_u} \ar[rrr]^{\mu^*\Q^{\P_2}} & & & w^* \Q^{\P_2} \ar@{=}[d]^{\id_w} \\ \Q^{\P_1} (u \times 1)^* \ar[rrr]^{\Q^{\P_1} (\mu \times 1)^*}  & & & \Q^{\P_1} (w \times 1)^*  }$$
commutes. This by naturality and the definition of a mate implies that the diagram 
$$\xymatrix{(\Q^{\P_1})^{-1} u^* \ar[rrr]^-{(\Q^{\P_1})^{-1} \mu^*} \ar[d]_{(\id_u)_!} & & & (\Q^{\P_1})^{-1} w^* \ar[d]^{(\id_w)_!} \\ (u \times 1)^* (\Q^{\P_2})^{-1} \ar[rrr]^-{(\mu \times 1)^* (\Q^{\P_2})^{-1}} & & & (w \times 1)^* (\Q^{\P_2})^{-1} }$$
commutes. On the other hand we also have the commutative diagram 
$$\xymatrix{\Hocolim (u \times 1)^* \ar[d] \ar[rr]^-{\Hocolim (\mu \times 1)^*} & & \Hocolim (w \times 1)^* \ar[d]\\ u^* \Hocolim \ar[rr]^-{\mu^* \Hocolim} &  & w^* \Hocolim.}$$
This can be seen by an explicit check or again using the fact that  
$$\Hocolim \colon \Ho(\M^{(-) \times \C_N}) \to \Ho(\M^{(-)})$$ 
is a morphism of derivators. Now by combining the latter two commutative diagrams and passing to the derived categories, we get the desired result. 
\end{proof}

This proposition implies that the functors 
$$\R \colon \Ho((\Mod \pi_*S)^{\P}) \to \Ho(\M^{\P})$$
almost assemble into a \emph{morphism of derivators} (see e.g., \cite[Section 2.1]{Gro13}) except that they are only defined if $\dimm \P + \gl \pi_*S < N-1$. In other words, these functors form a \emph{partial morphism of derivators}.  

\section{Triangulated structures} \label{trsec}

In this section we will show that in certain cases the functor $\R$ preserves distinguished triangles. Our strategy is to use the fact that distinguished triangles in $\Ho(\M)$ can be completely described by the derivator structure on $\Ho(\M^{(-)})$ and then apply the previous section to get the desired result. 

\subsection{Constructing distinguished triangles} \label{Constrdertriangles}

The construction of triangles in terms of derivators classically uses a poset of dimension three (see e.g., \cite{F96, Mal07} and \cite[Section 4.2]{Gro13}). Since we have restrictions on homological dimensions, it is not convenient for us to consider diagram categories indexed on posets of dimensions more than one. In this section we reformulate the construction of distinguished triangles completely in terms of one dimensional posets. Note that this reformulation is already mentioned in \cite[Remark 2 of Section 1.4]{F96}. However, no details are given there.    

First we recall the definition of the mapping cone functor. Let $\M$ be a stable model category and $[1]$ denote the poset with elements $0$ and $1$ and the relation $0 \leq 1$. Further consider the poset 
$$\xymatrix{(0,0) \ar[r] \ar[d] & (1,0) \\ (0,1)}$$
denoted by $\ulcorner$. Let $\iota \colon [1] \to \ulcorner$ be the functor (i.e., order preserving map) which sends $0$ to $(0,0)$ and $1$ to $(1,0)$. In other words, $\iota$ includes the interval $[1]$ into $\ulcorner$ as the top horizontal line. Further, let $\Box$ denote the poset
$$\xymatrix{(0,0) \ar[r] \ar[d] & (1,0) \ar[d] \\ (0,1) \ar[r] & (1,1)}$$
and $i_{\ulcorner} \colon \ulcorner \to \Box$ the obvious inclusion. Finally, we will also use the functor $j \colon [1] \to \Box$ which includes the interval $[1]$ as the right vertical arrow inside $\Box$. The \emph{mapping cone functor} $\CCone \colon \Ho(\M^{[1]}) \to \Ho(\M^{[1]})$ is then defined as the composite (see e.g., \cite[Definition 3.18]{Gro13}):
$$\xymatrix{\Ho(\M^{[1]}) \ar[r]^{\iota_*} & \Ho(\M^{\ulcorner}) \ar[r]^{(i_{\ulcorner})_{!}} & \Ho(\M^\Box) \ar[r]^{j^*} &  \Ho(\M^{[1]}).}$$
This definition of the mapping cone functor uses the poset $\Box$ which has dimension two. We would like to reformulate the latter definition in terms of one dimensional diagrams. This is possible since the functor $j \colon [1] \to \Box$ has a left adjoint $p \colon \Box \to [1]$ which sends the top vertices of $\Box$ to $0$ and the lower vertices of $\Box$ to $1$. Indeed, the functor $j^* \colon \Ho(\M^{\Box}) \to \Ho(\M^{[1]})$ is left adjoint to $p^* \colon \Ho(M^{[1]}) \to \Ho(\M^{\Box})$. Let $\omega \colon \ulcorner \to [1]$ denote the composite $p \circ i_{\ulcorner}$. Then the homotopy left Kan extension $\omega_{!} \colon \Ho(\M^{\ulcorner}) \to \Ho(\M^{[1]})$ is naturally isomorphic to the composite of the left adjoints of $(i_{\ulcorner})^*$ and $p^*$. But the latter is isomorphic to the composite $j^* (i_{\ulcorner})_{!}$ and hence we can redefine the cone functor $\CCone \colon \Ho(\M^{[1]}) \to \Ho(\M^{[1]})$ to be the composite:
$$\xymatrix{\Ho(\M^{[1]}) \ar[r]^{\iota_{*}} & \Ho(\M^{\ulcorner}) \ar[r]^{\omega_{!}} & \Ho(\M^{[1]}).}$$
From this point on by the cone functor we will always mean the latter composite. 

Next, we recall the definition of the \emph{suspension functor} $\Sigma \colon \Ho(\M) \to \Ho(\M)$. Consider the functors $\inn_0, \inn_1 \colon \ast \to [1]$ which include the point as $0$ and $1$ respectively into $[1]$. Then the suspension functor is defined as the composite:
$$\xymatrix{\Ho(\M) \ar[r]^{(\inn_0)_*} & \Ho(\M^{[1]}) \ar[r]^{\CCone} & \Ho(\M^{[1]}) \ar[r]^{\inn_1^*} & \Ho(\M).}$$ 
It follows immediately from the definitions that this functor is naturally isomorphic to the suspension functor defined in \cite[7.1.6]{H99}. 

Next, we recall the definition of the \emph{underlying arrow functor} (see e.g., \cite{Gro13}): The unique natural transformation $\inn_0 \to \inn_1$ induces a natural transformation $\inn_0^* \to \inn_1^*$, where $\inn_0^*$ and $\inn_1^*$ are functors from $\Ho(\M^{[1]})$ to $\Ho(\M)$. These functors and the natural transformation assemble into a single functor $\dia_{[1]} \colon \Ho(\M^{[1]}) \to \Ho(\M)^{[1]}$. It follows for example from \cite[Lemma 1.9]{Gro13} that $\dia_{[1]}$ is full and essentially surjective. 
In particular, given an object $f \in \Ho(\M^{[1]})$, one obtains a morphism 
$$\xymatrix{\inn_0^*(f) \ar[r]^{\dia_{[1]}(f)} & \inn_1^*(f) }$$
and every other morphism in $\Ho(\M)$ is isomorphic to such a morphism. Hence we can define distinguished triangles in $\Ho(\M)$ by only using the morphisms of type $\dia_{[1]}(f)$ for some object $f$ in $\Ho(\M^{[1]})$. 

We are now almost ready to recall the definition of distinguished triangles in terms of the derivator structure of $\Ho(\M^{(-)})$. For this we need to observe that the target of $\dia_{[1]}(f)$ is naturally isomorphic to the source of $\CCone(f)$. This follows by definitions. However, since this isomorphism is built in in the definition of the distinguished triangles, we would like to make it more explicit. The isomorphism is given in terms of restriction and homotopy Kan extension functors. As advertised, the diagrams involved in the construction will be only one dimensional. 

Let $\inn_{(1,0)} \colon \ast \to \ulcorner$ denote the inclusion of the vertex $(1,0)$ into $\ulcorner$. The diagram
$$\xymatrix{
\ast \ar@{=}[d] \ar[r]^{\inn_{(1,0)}}  \xtwocell[1,1]{}\omit & \ulcorner \ar[d]^{\omega} \\
\ast \ar[r]_{\inn_0}  & [1] }$$
commutes and the arrow in the middle just indicates the identity natural transformation. This diagram induces a commutative diagram of homotopy categories
$$\xymatrix{
\Ho(\M^{[1]})  \ar[d]_{\omega^*} \ar[r]^{\inn_{0}^*}   & \Ho(\M) \ar@{=}[d] \\
\Ho(\M^{\ulcorner})  \ar[r]_{\inn_{(1,0)}^*}  & \Ho(\M),  \xtwocell[-1,-1]{}\omit
}$$
where the middle arrow again stands for the identity transformation. If we form the mate $(\id)_{!}$ of this identity natural transformation, then we get by definition the natural transformation
$$\xymatrix{\inn_{(1,0)}^* \ar[rr]^-{\inn_{(1,0)}^* \eta_{\omega}} & & \inn_{(1,0)}^*{\omega}^*{\omega}_{!} = \inn_{0}^*{\omega}_{!},}$$
where $\eta_{\omega}$ is the unit of the adjunction $({\omega}_{!}, {\omega}^*)$. 

Next, consider the adjunction
$$\xymatrix{ \iota^* \colon \Ho(\M^{\ulcorner}) \ar@<0.5ex>[r] & \Ho(\M^{[1]}) \cocolon \iota_* \ar@<0.5ex>[l]},$$
induced by the functor $\iota \colon [1] \to \ulcorner$. The functor $\iota$ is fully faithful. Hence, by \cite[Proposition 1.20]{Gro13} the counit $\varepsilon_{\iota} \colon \iota^* \iota_* \to \id$ is an isomorphism. 

Finally, we are ready to define a natural transformation $\zeta \colon \inn_1^* \to \inn_0^* \CCone$ as the composite
$$\xymatrix{\inn_1^* \ar[rr]^-{\inn_1^* (\varepsilon_{\iota})^{-1}} & & \inn_1^* \iota^* \iota_*=\inn_{(1,0)}^* \iota_* \ar[rr]^-{\inn_{(1,0)}^* \eta_{\omega} \iota_*} & & \inn_0^* \omega_{!} \iota_*}.$$ 

\begin{lem} \label{Conesource} The natural transformation $\zeta \colon \inn_1^* \to \inn_0^* \CCone$ is an isomorphism.

\end{lem}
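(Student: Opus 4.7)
The plan is to show $\zeta$ is an isomorphism by checking each of its two constituent factors separately. Recall that by construction $\zeta$ is the composite
$$\xymatrix{\inn_1^* \ar[rr]^-{\inn_1^* (\varepsilon_{\iota})^{-1}} & & \inn_{(1,0)}^* \iota_* \ar[rr]^-{\inn_{(1,0)}^* \eta_{\omega} \iota_*} & & \inn_0^* \omega_{!} \iota_*,}$$
where I have used the identity $\inn_1^* \iota^* = \inn_{(1,0)}^*$ coming from $\iota \circ \inn_1 = \inn_{(1,0)}$.

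The first factor is an isomorphism for free: since $\iota \colon [1] \to \ulcorner$ is fully faithful, the counit $\varepsilon_\iota$ is an isomorphism of functors by \cite[Proposition 1.20]{Gro13}, a fact already explicitly cited in the excerpt just before the definition of $\zeta$.

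For the second factor I would evaluate the unit $\eta_\omega$ at the vertex $(1,0) \in \ulcorner$. The claim is that, on any object $Y \in \Ho(\M^{\ulcorner})$ (and in particular on objects of the form $\iota_* X$), the map
$$Y_{(1,0)} = \inn_{(1,0)}^* Y \longrightarrow \inn_{(1,0)}^* \omega^* \omega_! Y = (\omega_! Y)_0$$
is an isomorphism. This I would deduce from the pointwise formula for homotopy left Kan extensions (axiom Der 4 of the derivator structure on $\Ho(\M^{(-)})$): the value $(\omega_! Y)_0$ is the homotopy colimit of the restriction of $Y$ to the comma category $\omega/0$. Now $\omega$ sends $(0,0)$ and $(1,0)$ to $0$ and $(0,1)$ to $1$, so the comma category $\omega/0$ is canonically identified with the subposet $\{(0,0) \to (1,0)\}$ of $\ulcorner$, and this poset has $(1,0)$ as a terminal object. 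Consequently the inclusion $\{(1,0)\} \hookrightarrow \omega/0$ is a right adjoint, hence homotopy cofinal, and the induced map $Y_{(1,0)} \to \Hocolim_{\omega/0} Y$ is an isomorphism. A direct check that this comparison isomorphism agrees with the unit map is the standard compatibility between the pointwise formula and the $(\omega_!, \omega^*)$-adjunction.

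I expect the only obstacle to be bookkeeping: matching the identification of $(\omega_! Y)_0$ coming from Der 4 with the one induced by the unit $\eta_\omega$, and tracing the identities $\omega \circ \inn_{(1,0)} = \inn_0$ and $\iota \circ \inn_1 = \inn_{(1,0)}$ through the diagrams. No new ideas are required beyond the standard derivator calculus of \cite[Sections 1.2--1.3]{Gro13}, and once these identifications are in place both factors of $\zeta$ are isomorphisms, so $\zeta$ is as well.
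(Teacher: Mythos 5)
Your proof is correct and follows essentially the same route as the paper. Both arguments reduce to showing that the component $\inn_{(1,0)}^*\eta_\omega$ of the unit of $(\omega_!,\omega^*)$ is an isomorphism; the paper does this by exhibiting the relevant square as a pasting of a comma square (homotopy exact by \cite[Prop. 1.26]{Gro13}) with a square built from the right adjoint $\inn_1$, whereas you unwind the same fact into the Der 4 pointwise formula over $\omega/\inn_0\cong[1]$ and the cofinality of the inclusion of the terminal vertex $(1,0)$ — but these are the same observations in different vocabulary, and the "standard compatibility" you invoke at the end is exactly what makes the pointwise formula agree with the mate appearing in the paper.
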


\begin{proof} The commutative diagram
$$\xymatrix{
\ast \ar@{=}[d] \ar[r]^{\inn_{(1,0)}}  \xtwocell[1,1]{}\omit & \ulcorner \ar[d]^{\omega} \\
\ast \ar[r]_{\inn_0}  & [1] }$$
is obtained by the pasting 
$$\xymatrix{
\ast    \xtwocell[1,1]{}\omit \ar[r]^{\inn_{1}} \ar@{=}[d] & [1] \ar[r]^{\iota} \ar[d] \xtwocell[1,1]{}\omit &  \ulcorner \ar[d]^{\omega} \\
\ast \ar@{=}[r]  & \ast \ar[r]_{\inn_0} & [1],
}$$
where the natural transformations are the identities. Hence the induced diagram on homotopy categories
$$\xymatrix{
\Ho(\M^{[1]})  \ar[d]_{\omega^*} \ar[r]^{\inn_{0}^*}   & \Ho(\M) \ar@{=}[d] \\
\Ho(\M^{\ulcorner})  \ar[r]_{\inn_{(1,0)}^*}  & \Ho(\M)  \xtwocell[-1,-1]{}\omit
}$$
is also equal to the pasting
$$\xymatrix{
\Ho(\M^{[1]})  \ar[d]_{\omega^*} \ar[r]^{\inn_{0}^*}   & \Ho(\M) \ar[d]^{\const} \ar@{=}[r] & \Ho(\M) \ar@{=}[d] \\
\Ho(\M^{\ulcorner})  \ar[r]_{\iota^*}  & \Ho(\M^{[1]}) \ar[r]_{\inn_1^*}  \xtwocell[-1,-1]{}\omit & \Ho(\M). \xtwocell[-1,-1]{}\omit
}$$
It follows from \cite[Proposition 1.18]{Gro13} that the mate $(\id)_{!}$ associated to the second square is an isomorphism. Indeed, the functor $\inn_1 \colon \ast \to [1]$ is right adjoint and hence the square involving $\inn_1$ is \emph{homotopy exact} in the sense of \cite[Definition 1.15]{Gro13}. The mate $(\id)_{!}$  coming from the first square in the latter diagram is also an isomorphism because the square 
$$\xymatrix{
[1] \ar[r]^{\iota} \ar[d] \xtwocell[1,1]{}\omit &  \ulcorner \ar[d]^{\omega} \\
\ast \ar[r]_{\inn_0} & [1],
}$$
is a $2$-pullback. Indeed, this square is associated to the \emph{comma category} $(\omega/\inn_0)$ and hence by \cite[Proposition 1.26]{Gro13} homotopy exact, which by definition means that the corresponding mate $(\id)_{!}$ is an isomorphism. Since forming mates and pasting commute with each other, we conclude that the mate associated to the diagram 
$$\xymatrix{
\Ho(\M^{[1]})  \ar[d]_{\omega^*} \ar[r]^{\inn_{0}^*}   & \Ho(\M) \ar@{=}[d] \\
\Ho(\M^{\ulcorner})  \ar[r]_{\inn_{(1,0)}^*}  & \Ho(\M)  \xtwocell[-1,-1]{}\omit
}$$
is an isomorphism. As we have already observed, the latter is exactly the natural transformation $\inn_{(1,0)}^* \eta_{\omega}$. This implies the desired result since the counit $\varepsilon_\iota \colon \iota^* \iota_* \to \id$ is a natural isomorphism.  \end{proof}

Finally, we are ready to redefine the distinguished triangles. In order to be consistent we have to once and for all fix models for the homotopy left and right Kan extensions for the derivator $\Ho(\M^{(-)})$. The homotopy Kan extensions are only uniquely defined up to canonical isomorphisms, since adjoints for a given functor are. Any such choice of Kan extensions leads to a triangulated structure on $\Ho(\M)$. The triangulated category structures on $\Ho(\M)$ associated to different choices of adjoints are equivalent as triangulated categories via the identity functor and the canonical isomorphism between different suspension functors. 

\begin{defi} \label{triangl} Let $f$ be an object of $\Ho(\M^{[1]})$. An \emph{elementary homotopy triangle} associated to $f$ is defined to be the sequence in $\Ho(\M)$
$$\xymatrix{\inn_0^*(f) \ar[rr]^-{\dia_{[1]}(f)} & & \inn_1^*(f) \ar[rrr]^-{\dia_{[1]} (\CCone(f)) \circ \zeta(f)} & & & \inn_1^*(\CCone(f)) \ar[rrr]^-{\inn_1^*(\CCone(\eta_{\inn_0}(f)))}   & & & \Sigma (\inn_0^*(f)), }$$
where we use the isomorphism $\zeta(f) \colon \inn_1^*(f) \cong \inn_0^*(\CCone(f))$, the definition of the suspension functor $\Sigma= \inn_1^* \CCone (\inn_0)_*$ and the unit $\eta_{\inn_0}$ of the adjunction $(\inn_0^*, (\inn_0)_*)$.

\end{defi}

If $\M$ is a simplicial stable model category or the model category of differential graded modules over a differential graded ring (see Subsection \ref{stmodexmp}), then the choices of the left and right homotopy Kan extensions can be done in such a way that a triangle in $\Ho(\M)$ is distinguished in the sense of Subsection \ref{stmodexmp} if and only if it is isomorphic in $\Ho(\M)$ to an elementary homotopy triangle. The following is a consequence of \cite[Theorem 4.16]{Gro13} and \cite[Section 7.1]{H99} and summarizes the above discussion:

\begin{prop} \label{Dertriang} Suppose $\M$ is any of the stable model categories from Subsection \ref{stmodexmp}. 

{\rm (i)} For any fixed choice of homotopy Kan extensions (for relevant one dimensional diagrams), the suspension functor and the elementary homotopy triangles define a triangulated category structure on $\Ho(\M)$. A triangle in $\Ho(\M)$ is distinguished with respect to this structure if and only if it is isomorphic in $\Ho(\M)$ to an elementary homotopy triangle. Up to triangulated equivalence this triangulated structure does not depend on the choice of homotopy Kan extensions.

{\rm (ii)} One can choose necessary homotopy Kan extensions in such a way that the associated triangulated structure on $\Ho(\M)$ coming from the elementary homotopy triangles coincides with the one defined in \cite[Section 7.1]{H99}. 

\end{prop}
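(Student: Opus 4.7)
The plan is to identify the elementary homotopy triangles of Definition \ref{triangl} with the standard distinguished triangles produced by the stable derivator $\Ho(\M^{(-)})$ in the sense of \cite{Gro13}, so that both parts reduce to citing \cite[Theorem 4.16]{Gro13} together with Hovey's explicit construction \cite[Section 7.1]{H99}. First I would verify that the reformulated mapping cone functor $\CCone = \omega_{!}\,\iota_{*}$ used here agrees with the more customary one $j^{*}(i_{\ulcorner})_{!}\iota_{*}$ from \cite[Definition 3.18]{Gro13}. This is already implicit in the paragraph preceding Definition \ref{triangl}: the functor $p \colon \Box \to [1]$ is left adjoint to $j$, so at the level of derivators $j^{*}$ is left adjoint to $p^{*}$, whence $\omega_{!} = (p \, i_{\ulcorner})_{!} \cong j^{*}(i_{\ulcorner})_{!}$ by uniqueness of adjoints.

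Next I would show that the sequence in Definition \ref{triangl} is naturally isomorphic to the diagram image of the standard cofiber square over $\Box$. The first arrow $\dia_{[1]}(f)$ is by definition the underlying arrow of $f$. The second arrow, built from Lemma \ref{Conesource}, identifies the target $\inn_{1}^{*}(f)$ of $f$ with the source $\inn_{0}^{*}(\CCone(f))$ of the cone; unwinding $\zeta(f)$ as the mate of the identity natural transformation attached to the pullback square $(\ast \to [1]) = (\omega \, \inn_{(1,0)})$ shows that this is exactly the canonical comparison map between the two top-right vertices of Groth's cofiber square. The third arrow, built from the unit $\eta_{\inn_{0}}$, realises the connecting morphism $\CCone(f) \to \Sigma (\inn_{0}^{*}(f))$, since applying $\inn_{1}^{*}\CCone$ to $\eta_{\inn_{0}}$ exhibits the bottom-left vertex of the pushout as a zero object up to the canonical identification $\Sigma = \inn_{1}^{*}\CCone\,(\inn_{0})_{*}$. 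Once this identification is in place, part (i) follows from \cite[Theorem 4.16]{Gro13}: a strong stable derivator carries a canonical triangulated structure on its underlying category, with distinguished triangles being precisely those isomorphic to such cofiber sequences. The independence up to triangulated equivalence from the choice of Kan extensions is immediate since any two choices are related by unique canonical natural isomorphisms; transport along these yields a triangulated equivalence realised by the identity functor together with the induced suspension isomorphism.

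For part (ii), I would exhibit compatible choices of one-dimensional homotopy Kan extensions coming from the explicit pushout models recalled in Subsection \ref{stmodexmp}. Concretely, for a map $f \colon X \to Y$ between cofibrant objects of $\M$, the strict pushout square defining $\Cone(f)$ is homotopy cocartesian and provides a representative of $(i_{\ulcorner})_{!}\,\iota_{*}f$; the simplicial model $S^{1}\wedge X$ for $\Sigma X$ arises from the same choice applied with $Y = \ast$. With these choices the composite of Definition \ref{triangl} unwinds, up to canonical isomorphism, to the elementary distinguished triangle $X \xto{f} Y \xto{j} \Cone(f) \xto{\pa} S^{1}\wedge X$ of \ref{stmodexmp}. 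The case of differential graded modules from Example \ref{exmpIII} is handled in parallel, taking the strict algebraic cone of Remark \ref{algcone} as the representative of the derived pushout.

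The main obstacle will be carefully unpacking the natural transformation $\zeta$ and the third map in Definition \ref{triangl} in order to match them with the connecting map of Groth's cofiber sequence; the key technical input here is \cite[Proposition 1.18 and Proposition 1.26]{Gro13}, already invoked in Lemma \ref{Conesource}, which guarantee that the relevant mates are isomorphisms. After this identification, everything else is a direct citation of \cite[Theorem 4.16]{Gro13} and \cite[Section 7.1]{H99}.
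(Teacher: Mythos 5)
Your proposal is correct and follows essentially the same route as the paper: the paper's own justification is precisely the discussion preceding Definition~\ref{triangl} (including the identification $\omega_{!}\iota_* \cong j^*(i_\ulcorner)_!\iota_*$ and the isomorphism $\zeta$ from Lemma~\ref{Conesource}), together with citations to \cite[Theorem 4.16]{Gro13} for the triangulated structure coming from the stable derivator and to \cite[Section 7.1]{H99} for the explicit simplicial/dg model of the cofiber square. You have fleshed out these citations with more detail, in particular by unwinding $\zeta$ and the unit $\eta_{\inn_0}$ to match the connecting morphism with Groth's cofiber sequence, but the key steps and the invoked external results are the same as in the paper.
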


From now on we will once and for all fix the choices and the triangulated structure from Proposition \ref{Dertriang} (ii) on $\Ho(\M)$. In particular, when checking that a functor $F \colon \Ho(\M) \to \Ho(\N)$ is triangulated, it suffices to show that $F$ preserves up to isomorphism of triangles the elementary homotopy triangles. This observation will be used in the next subsection.   

\subsection{Preserving distinguished triangles} \label{preservetr} In this subsection we assume that $\M$ is a simplicial stable model category with a compact generator $S$ of $\Ho(\M)$ such that $\pi_*S=[S,S]_*$ is $N$-sparse. We will also assume that $N \geq 4$ and $\gl \pi_*S=1$. Our main example which satisfies these conditions is the model category $\Mod KO_{(p)}$ of modules over periodic real $K$-theory localized at an odd prime $p$.

By Theorem \ref{diagequivcat}, for any poset $\P$ whose dimension is at most one, the functor
$$\R \colon \Ho((\Mod \pi_*S)^{\P}) \to \Ho(\M^{\P})$$ 
is an equivalence of categories. Further, Proposition \ref{change of dia} and Proposition \ref{morphism of der} tell us that these equivalences are compatible with functors between posets of dimension at most one and their natural transformations. This together with the description of distinguished triangles in the previous subsection will imply that the functor
$$\R \colon \D(\pi_*S)=\Ho(\Mod \pi_*S) \to \Ho(\M)$$ 
is a triangulated equivalence of triangulated categories. 

The first thing to show is that the $\R$ functors are compatible with the underlying arrow functors $\dia_{[1]}$. Let $\alpha \colon \inn_0 \to \inn_1$ denote the unique natural transformation of the functors $\inn_0, \inn_1 \colon \ast \to [1]$. It follows from Proposition \ref{morphism of der} (ii) that the diagram
$$\xymatrix{\R\inn_0^* \ar[r]^{\R \alpha^*} \ar[d]_{\gamma_{\inn_0}} & \R \inn_1^* \ar[d]^{\gamma_{\inn_1}} \\ \inn_0^* \R \ar[r]^{\alpha^*\R} & \inn_1^* \R }$$
commutes. Here in the upper arrow the symbol $\R$ stands for the functor $\R \colon \Ho(\Mod \pi_*S) \to \Ho(\M)$ and in the lower arrow for the functor $\R \colon \Ho((\Mod \pi_*S)^{[1]}) \to \Ho(\M^{[1]})$. By definition of the functor $\dia_{[1]}$, the latter commutative diagram implies that for any $f \in \Ho((\Mod \pi_*S)^{[1]})$, the diagram 
$$\xymatrix{\R(\inn_0^*(f)) \ar[rr]^{\R(\dia_{[1]}(f))} \ar[d]_{\gamma_{\inn_0}(f)} & & \R (\inn_1^*(f)) \ar[d]^{\gamma_{\inn_1}(f)} \\ \inn_0^* (\R(f)) \ar[rr]^{\dia_{[1]}(\R(f))} & & \inn_1^* (\R(f)) }$$
commutes. Recall also that by Proposition \ref{change of dia} the vertical arrows are isomorphisms in this diagram. We will prove bellow that in fact the elementary homotopy triangle associated to an object $f$ of $\Ho((\Mod \pi_*S)^{[1]}) $ is mapped under $\R$ to a triangle isomorphic to the elementary homotopy triangle associated to $\R(f) \in \Ho(\M^{[1]}) $. Since elementary homotopy triangles were defined using homotopy Kan extensions, we have to check that the $\R$ functors commute with relevant homotopy Kan extensions. 

The following proposition follows for example from \cite[Proposition 2.9]{Gro13}. We still provide details here because compared to the standard derivator literature, we have restrictions on dimensions of diagrams.    

\begin{prop} \label{commutKan} Let $\M$ be a simplicial stable model category and $S$ a compact generator of $\Ho(\M)$. Suppose that $\pi_*S=[S,S]_*$ is $N$-sparse for some $N \geq 4$ and $\gl \pi_*S=1$. Further, let $u \colon \P_1 \to \P_2$ be an order preserving map of at most one dimensional posets. Then the following hold:

{\rm (i)} The mate $\beta_u=(\gamma_u)_{!} \colon u_{!} \R \to \R u_{!}$ is a natural isomorphism. (The same applies to $\gamma_u^{-1}$ and the right homotopy Kan extensions and hence we also have a natural isomorphism $\delta_{u} \colon \R u_* \to u_*\R$.)

{\rm (ii)} The functors $\R$ are compatible with units and counits of homotopy Kan extensions. E.g., the diagram
$$\xymatrix{u_{!} \R u^* \ar[d]_{u_{!} \gamma_u} \ar[rr]^{\beta_u u^*} & & \R u_{!} u^* \ar[d]^{\R \varepsilon_{u}} \\ u_{!}u^* \R \ar[rr]^{\varepsilon_{u}\R} & & \R  }$$ 
commutes and similar diagrams for other counit and units also commute. \end{prop}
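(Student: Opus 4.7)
The hypotheses $\gl \pi_*S = 1$ and $\dimm \P_i \leq 1$ give $\dimm \P_i + \gl \pi_*S \leq 2$, so together with $N \geq 4$ the numerical assumptions of both Theorem \ref{diagequivcat} (which requires $\leq 2$) and Proposition \ref{change of dia} (which requires $< N - 1$) are satisfied. Consequently each $\R \colon \Ho((\Mod \pi_*S)^{\P_i}) \to \Ho(\M^{\P_i})$ is an equivalence of categories, and $\gamma_u \colon \R u^* \xrightarrow{\sim} u^* \R$ is a natural isomorphism. Both assertions (i) and (ii) are then purely formal consequences of these two inputs via the calculus of mates.

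For (i), my approach is to observe that the intertwining isomorphism $\gamma_u$ transports adjoints. Fixing quasi-inverses $\R^{-1}$ at both posets, one obtains from $\gamma_u$ a natural isomorphism $u^* \cong \R^{-1} u^* \R$, so the functor $u^*$ on the module-category side acquires two left adjoints: its canonical one, $u_!$, and the composite $\R^{-1} u_! \R$ transported from the $\M$-side. Uniqueness of adjoints gives a canonical natural isomorphism $\R u_! \cong u_! \R$, and the standard mate calculus (cf.\ \cite[Section 1.2]{Gro13}) identifies this with the mate $\beta_u = (\gamma_u)_!$. An entirely analogous argument, starting from $\gamma_u^{-1} \colon u^* \R \xrightarrow{\sim} \R u^*$ and passing to right adjoints, produces the natural isomorphism $\delta_u \colon \R u_* \xrightarrow{\sim} u_* \R$.

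For (ii), the compatibilities are essentially built into the definition of the mate. Unwinding, $\beta_u$ is the composite
$$u_! \R \xrightarrow{u_! \R \eta_u} u_! \R u^* u_! \xrightarrow{u_! \gamma_u u_!} u_! u^* \R u_! \xrightarrow{\varepsilon_u \R u_!} \R u_!,$$
so the identity $\R \varepsilon_u \circ \beta_u u^* = \varepsilon_u \R \circ u_! \gamma_u$ reduces, after one application of the triangle identity $u^* \varepsilon_u \circ \eta_u u^* = \id$ on the $\M$-side and naturality of $\gamma_u$ against $\varepsilon_u$, to a sequence of commuting naturality squares. The analogous squares for the unit $\eta_u$, and the corresponding identities for $\delta_u$ (with the right adjoints $u_*$), are handled by the same kind of formal manipulation.

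The only real obstacle is the bookkeeping-theoretic one: tracking the various natural transformations ($\gamma_u$, $\beta_u$, $\delta_u$, units and counits) and their horizontal/vertical pastings correctly. All substantive homotopical content has already been absorbed into Theorem \ref{diagequivcat} and Proposition \ref{change of dia}; once those are invoked, what remains is pure $2$-categorical mate calculus, exactly as in \cite[Section 2]{Gro13}, restricted to the partial-derivator setting of posets of dimension at most one.
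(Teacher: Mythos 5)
Your proposal is correct and follows essentially the same route as the paper. For (i), the paper likewise uses that each $\R$ is an equivalence (hence an adjoint), forms the second mate of $\gamma_u$ to get an isomorphism $u^*\R^{-1}\to\R^{-1}u^*$, and concludes $\beta_u$ is an isomorphism because it is conjugate to that mate in the sense of \cite[Lemma 1.14]{Gro13}; this is the same content as your transport-of-adjoints argument. For (ii), the paper draws exactly the diagram you describe in words: a left triangle collapsed by a triangle identity and a right naturality square for the composite $\varepsilon_u\R\circ u_!\gamma_u$. One small correction: in (ii) the triangle identity that cancels $u_!\R\eta_u u^*$ against $u_!\R u^*\varepsilon_u$ is the one for $(u_!,u^*)$ on the $\Mod\pi_*S$-side (since that is where the unit $\eta_u$ appearing in the definition of $\beta_u$ lives), not the $\M$-side as you wrote; this is a bookkeeping slip and does not affect the argument.
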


\begin{proof} (i) Recall that $\beta_u=(\gamma_u)_{!}$ is defined as the composite
$$\xymatrix{u_{!} \R \ar[rr]^{u_{!} \R \eta_{u}} & & u_{!} \R u^* u_{!} \ar[rr]^{u_{!}\gamma_u u_{!}} & & u_{!}u^* \R u_{!} \ar[rr]^{\varepsilon_{u} \R u_{!} } & & \R u_{!}, }$$
where $\eta_u$ is the unit. The functors $\R$ are equivalences of categories by Theorem \ref{diagequivcat}. One can choose the inverse $\R^{-1}$ and consider $\R$ as a left adjoint functor to $\R^{-1}$. Then forming the second mate associated to the natural isomorphism $\gamma_u \colon \R u^* \to u^* \R$ gives a natural isomorphism $u^* \R^{-1} \to \R^{-1} u^*$. This implies the desired result since $\beta_u$ is conjugate to the latter natural transformation (see e.g., \cite[Lemma 1.14]{Gro13}).

(ii) This part follows from the commutative diagram:
$$\xymatrix{& u_{!} \R u^* u_{!} u^* \ar[d]^{u_{!} \R u^* \varepsilon_u} \ar[rr]^{u_{!} \gamma_u u_{!} u^*} & & u_{!} u^* \R u_{!} u^* \ar[rr]^{\varepsilon_{u} \R u_{!} u^*} & & \R u_{!} u^* \ar[d]^{\R \varepsilon_u} \\ u_{!} \R u^* \ar[ur]^{u_{!}\R \eta_{u} u^*} \ar@{=}[r] & u_{!} \R u^* \ar[rr]^{u_{!} \gamma_u} & & u_{!} u^* \R \ar[rr]^{\varepsilon_u \R}  & & \R. }$$
The right hand square commutes since the lower horizontal composition is a natural transformation. The left hand triangle commutes because of one of the triangular identities of the adjunction $(u_{!}, u^*)$. \end{proof}

Now recall that in the previous subsection the mapping cone functor was defined as the composite $\omega_{!} \iota_*$. Since $[1]$ and $\ulcorner$ are one dimensional posets, Proposition \ref{commutKan} (i) applies to $\iota$ and $\omega$. Hence we conclude that there is a canonical isomorphism 
$$\xymatrix{c \colon \R \CCone \ar[r]^-{\cong} & \CCone \R.}$$
This isomorphism is obtained by combining the canonical isomorphisms $\delta_{\iota} \colon \R\iota_* \cong \iota_*\R$ and $\beta_{\omega} \colon \omega_{!} \R \cong \R \omega_{!} $ coming from Proposition \ref{commutKan} (i). The following is a corollary of Proposition \ref{morphism of der} (i) and Proposition \ref{commutKan} (ii):

\begin{coro} \label{sourcecone} The diagram (consisting of natural isomorphisms)
$$\xymatrix{\R \inn_1^* \ar[d]_{\gamma_{\inn_1}} \ar[r]^-{\R \zeta} & \R \inn_0^* \CCone \ar[d]^{(\inn_0^* c) \circ (\gamma_{\inn_0} \CCone) } \\ \inn_1^* \R \ar[r]^-{\zeta \R} & \inn_0^* \CCone \R}$$
commutes. 

\end{coro}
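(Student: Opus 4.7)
The plan is to unfold $\zeta$ into its defining two-step composite
\[
\inn_1^* \xrightarrow{\inn_1^* \varepsilon_\iota^{-1}} \inn_1^* \iota^* \iota_* = \inn_{(1,0)}^* \iota_* \xrightarrow{\inn_{(1,0)}^* \eta_\omega \iota_*} \inn_0^* \omega_! \iota_* = \inn_0^* \CCone
\]
and to split the square of the corollary into two horizontally-stacked subrectangles joined along the common vertical edge
\[
(\inn_{(1,0)}^* \delta_\iota) \circ (\gamma_{\inn_{(1,0)}} \iota_*) \colon \R \inn_{(1,0)}^* \iota_* \longrightarrow \inn_{(1,0)}^* \iota_* \R.
\]
Each subrectangle is then verified separately and the two are pasted.

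For the first subrectangle, containing the $\varepsilon_\iota^{-1}$ piece, I would apply the counit analogue of Proposition \ref{commutKan}(ii) for the adjunction $(\iota^*, \iota_*)$, which factors $\R \varepsilon_\iota$ as $\varepsilon_\iota \R \circ (\iota^* \delta_\iota) \circ (\gamma_\iota \iota_*)$; inverting $\varepsilon_\iota$ (legitimate since $\iota$ is fully faithful, as used in Subsection \ref{Constrdertriangles}) and whiskering by $\inn_1^*$ on the left gives a decomposition of $\inn_1^* \varepsilon_\iota^{-1} \R$. Naturality of $\gamma_{\inn_1}$ as a natural transformation of functors on $\Ho((\Mod\pi_*S)^{[1]})$ converts $\R \varepsilon_\iota^{-1}$ into its whiskered form $\R \inn_1^* \varepsilon_\iota^{-1}$, and Proposition \ref{morphism of der}(i) in the form $\gamma_{\inn_{(1,0)}} = \inn_1^* \gamma_\iota \circ \gamma_{\inn_1} \iota^*$ (coming from $\inn_{(1,0)} = \iota \circ \inn_1$) identifies the resulting middle composite with the declared common edge.

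For the second subrectangle, containing the $\eta_\omega$ piece, I would apply Proposition \ref{commutKan}(ii) directly to the unit of $(\omega_!, \omega^*)$, which yields the commutation of $\gamma_\omega \omega_! \circ \R \eta_\omega$ with $\omega^* \beta_\omega \circ \eta_\omega \R$. Whiskering on the left by $\inn_0^*$ and on the right by $\iota_*$, and using $\inn_{(1,0)}^* = \inn_0^* \omega^*$ (since $\omega \circ \inn_{(1,0)} = \inn_0$), a second application of Proposition \ref{morphism of der}(i) in the form $\gamma_{\inn_{(1,0)}} = \inn_0^* \gamma_\omega \circ \gamma_{\inn_0} \omega^*$ rearranges the identity into the commutativity of the second subrectangle, provided one expands $c$ according to its construction as $(\omega_! \delta_\iota) \circ (\beta_\omega^{-1} \iota_*)$ from Subsection \ref{preservetr}.

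Stacking the two commuting subrectangles yields the square of the corollary. The main obstacle is really bookkeeping: Proposition \ref{commutKan}(ii) is stated explicitly only for the counit of $(u_!, u^*)$, so the first subrectangle requires the dual version (covered by the ``similar diagrams'' clause) for the counit of $(\iota^*, \iota_*)$, and this must be spelled out consistently with the fixed orientations of the $\gamma$, $\beta$, $\delta$ isomorphisms from Proposition \ref{commutKan}(i).
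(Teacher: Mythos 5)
Your proof takes essentially the same approach as the paper's one-sentence argument: unfold $\zeta$ into the counit and unit pieces of $(\iota^*, \iota_*)$ and $(\omega_!, \omega^*)$, split the square along the intermediate functor $\inn_{(1,0)}^* \iota_*$, and apply Propositions \ref{commutKan}(ii) (including its ``similar diagrams'' extension to the counit of $(\iota^*,\iota_*)$) and \ref{morphism of der}(i). One small slip in the second subrectangle: since $\omega \circ \inn_{(1,0)} = \inn_0$, the correct identities are $\inn_0^* = \inn_{(1,0)}^* \omega^*$ and $\gamma_{\inn_0} = \inn_{(1,0)}^* \gamma_\omega \circ \gamma_{\inn_{(1,0)}} \omega^*$ (you wrote these with $\inn_0$ and $\inn_{(1,0)}$ interchanged, and the version you wrote is even type-incorrect), but the underlying argument is unaffected once the names are corrected.
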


\begin{proof} The natural isomorphism $\zeta$ is defined in terms of the homotopy Kan extensions. This implies the desired result. \end{proof}

Another corollary of Proposition \ref{commutKan} is that the functor 
$$\R \colon \D(\pi_*S)=\Ho(\Mod \pi_*S) \to \Ho(\M)$$ 
commutes with suspensions. Indeed, define $a \colon \R \Sigma \to \Sigma \R$ to be the following composite:
$$\xymatrix{\hspace{-1.0cm} \R \inn_1^* \CCone (\inn_0)_* \ar[rr]^-{\gamma_{\inn_1} (\CCone (\inn_0)_*)} & & \inn_1^* 
\R \CCone (\inn_0)_* \ar[rr]^-{\inn_1^* (c (\inn_0)_*) } & & \inn_1^* 
\CCone \R (\inn_0)_* \ar[rr]^-{\inn_1^* 
\CCone (\delta_{\inn_0})} & &   \inn_1^* \CCone (\inn_0)_* \R. }$$
All the arrows in this composite are isomorphisms and hence so is $a \colon \R \Sigma \to \Sigma \R$. 

\;

\;

\;

Finally, we are ready to show that the functor $\R \colon \D(\pi_*S)=\Ho(\Mod \pi_*S) \to \Ho(\M)$ is triangulated. This is the main result of the section:

\begin{theo} \label{gentriangleequi} Let $\M$ be a simplicial stable model category and $S$ a compact generator of $\Ho(\M)$. Suppose that $\pi_*S=[S,S]_*$ is $N$-sparse for some $N \geq 4$ and $\gl \pi_*S=1$. Then the functor
$$\R \colon \D(\pi_*S)=\Ho(\Mod \pi_*S) \to \Ho(\M)$$
together with the natural isomorphism $a \colon \R \Sigma \cong \Sigma \R$ is a triangulated equivalence.

\end{theo}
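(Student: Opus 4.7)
The plan is to deduce the theorem from the constructions of Section \ref{trsec}. First, for the underlying equivalence of categories, apply Theorem \ref{diagequivcat} to the one-point poset $\P = \ast$: since $\dimm \ast + \gl \pi_*S = 0 + 1 \leq 2$, the functor $\R \colon \Ho(\Mod \pi_*S) \to \Ho(\M)$ is already an equivalence. What remains is to verify that, together with the natural isomorphism $a \colon \R\Sigma \cong \Sigma\R$ constructed just before the theorem, $\R$ sends distinguished triangles to distinguished triangles.

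By Proposition \ref{Dertriang}, it suffices to show that for each $f \in \Ho((\Mod \pi_*S)^{[1]})$ the image under $\R$ of the elementary homotopy triangle associated to $f$ is isomorphic, as a triangle, to the elementary homotopy triangle associated to $\R(f) \in \Ho(\M^{[1]})$. I will take as vertical comparison isomorphisms $\gamma_{\inn_0}(f)$ at the first vertex, $\gamma_{\inn_1}(f)$ at the second vertex, $\inn_1^*(c(f)) \circ \gamma_{\inn_1}(\CCone(f))$ at the third, and $\Sigma(\gamma_{\inn_0}(f)) \circ a(\inn_0^*(f))$ at the fourth. All four are isomorphisms by Proposition \ref{change of dia}, Proposition \ref{commutKan}(i), and the definition of $a$. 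Three squares must then be verified.

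The first square, involving $\dia_{[1]}(f)$, is precisely the diagram recorded at the start of Subsection \ref{preservetr}, which is itself an instance of Proposition \ref{morphism of der}(ii) applied to the unique natural transformation $\alpha \colon \inn_0 \to \inn_1$. The middle square, for $\dia_{[1]}(\CCone(f)) \circ \zeta(f)$, splits into two pieces: the $\zeta$-half is exactly Corollary \ref{sourcecone}, and the $\dia_{[1]}$-half is the same naturality of $\gamma$ in $\alpha$, this time applied to $\CCone(f)$ in place of $f$.

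The main obstacle will be the third square, involving the connecting morphism $\inn_1^*(\CCone(\eta_{\inn_0}(f)))$. Here I will unwind the defining triple composite of $a$ and then combine three ingredients: the compatibility of $\R$ with the unit $\eta_{\inn_0}$ of the adjunction $(\inn_0^*, (\inn_0)_*)$, which is Proposition \ref{commutKan}(ii); the naturality of $c \colon \R\CCone \cong \CCone\R$ applied to the morphism $\eta_{\inn_0}(f)$; and the naturality of $\gamma_{\inn_1}$ applied to $\CCone(\eta_{\inn_0}(f))$. Each ingredient has been established in Section \ref{trsec}, so the work is essentially bookkeeping of adjunction units and counits rather than anything conceptually new. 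Once this final diagram chase is complete, the three commuting squares together exhibit the required isomorphism of triangles, and the theorem follows.
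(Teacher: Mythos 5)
Your proposal is correct and follows essentially the same strategy as the paper's proof: reduce to preservation of elementary homotopy triangles via Proposition \ref{Dertriang}, then verify the three-square commutative diagram using the naturality of $\gamma$ in $\alpha$ for the first and middle squares, Corollary \ref{sourcecone} for the $\zeta$-part of the middle square, and the compatibility with $\eta_{\inn_0}$ from Proposition \ref{commutKan}(ii) together with naturality of $c$ and $\gamma_{\inn_1}$ for the last square. The only cosmetic difference is that you absorb the suspension isomorphism $a$ into the fourth vertical comparison map rather than into the top-right horizontal arrow, but the resulting commutativity statement is identical to the paper's.
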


\begin{proof} That the functor $\R$ is an equivalence of categories is for example proved in \cite[Theorem 3.1.4]{Pat12} (see Subsection \ref{introFranke}). We will now show that it is compatible with distinguished triangles. By Proposition \ref{Dertriang} it suffices to check that $\R$ preserves elementary homotopy triangles. More precisely, the desired result will follow if we prove that for any $f \in \Ho((\Mod \pi_*S)^{[1]})$, the following diagram commutes:
{\footnotesize
$$\xymatrix{\R(\inn_0^*(f)) \ar[rr]^-{\R(\dia_{[1]}(f))} \ar[d]_{\gamma_{\inn_0}(f)} & & \R(\inn_1^*(f)) \ar[d]^{\gamma_{\inn_1}(f)} \ar[rrr]^-{\R(\dia_{[1]} (\CCone(f)) \circ \zeta(f))} & & & \R(\inn_1^*(\CCone(f))) \ar[rrr]^-{a \circ \R(\inn_1^*(\CCone(\eta_{\inn_0}(f))))} \ar[d]^{(\inn_1^* c(f)) \circ (\gamma_{\inn_1} (\CCone(f))) } & & & \Sigma(\R(\inn_0^*(f))) \ar[d]^{\Sigma (\gamma_{\inn_0}(f)) } \\ \inn_0^*(\R(f)) \ar[rr]^-{\dia_{[1]}(\R(f))} & & \inn_1^*(\R(f))  \ar[rrr]^-{\dia_{[1]}( \CCone(\R(f))) \circ \zeta(\R(f))} & & & \inn_1^*(\CCone(\R(f))) \ar[rrr]^-{\inn_1^*(\CCone(\eta_{\inn_0}(\R(f))))} & & & \Sigma (\inn_0^*(\R(f))). }$$}
Recall that the vertical morphisms are isomorphisms and the lower sequence is the elementary homotopy triangle associated to $\R(f) \in \Ho(\M^{[1]})$. 

We start by reminding the reader that we have already checked that the left square commutes. Indeed, this is just the compatibility of the $\R$ functors with $\dia_{[1]}$. Next, we observe that the diagram
$$\xymatrix{\R(\inn_0^*(\CCone(f))) \ar[d]_{\gamma_{\inn_0}(\CCone(f))} \ar[rrr]^{\R(\dia_{[1]}(\CCone(f)))} & & & \R(\inn_1^*(\CCone(f))) \ar[d]^{\gamma_{\inn_1}(\CCone(f))} \\ \inn_0^*(\R(\CCone(f))) \ar[d]_{\inn_0^*(c(f))} \ar[rrr]^{\dia_{[1]}(\R(\CCone(f)))} & & & \inn_1^*(\R (\CCone(f))) \ar[d]^{\inn_1^*(c(f))} \\ \inn_0^*(\CCone(\R(f))) \ar[rrr]^{\dia_{[1]}(\CCone(\R(f)))} & & & \inn_1^*(\CCone(\R(f))) }$$
commutes. The upper square commutes because of the compatibility between $\R$ and $\dia_{[1]}$. The lower square commutes since the morphism $\inn_0^* \to \inn_1^*$ is natural. The latter commutative diagram together with Corollary \ref{sourcecone} implies that the middle square in the main comparison diagram is commutative.

Finally, we have to argue why the right hand square commutes. This follows from the commutative diagram
$$\xymatrix{\R(\inn_1^*(\CCone(f))) \ar[rrr]^-{\R(\inn_1^*(\CCone(\eta_{\inn_0}(f))))} \ar[d]_-{\inn_1^*(c(f)) \circ \gamma_{\inn_1} } & & & \R(\inn_1^* (\CCone((\inn_0)_*(\inn_0^*(f))))) \ar[d]^-{\inn_1^*(c((\inn_0)_*(\inn_0^*(f)))) \circ \gamma_{\inn_1}} \\ \inn_1^*(\CCone(\R(f))) \ar[d]_-{\inn_1^*(\CCone(\eta_{\inn_0}(\R(f))))} \ar[rrr]^-{\inn_1^*(\CCone(\R(\eta_{\inn_0}(f))))} & & & \inn_1^* (\CCone(\R((\inn_0)_*(\inn_0^*(f))))) \ar[d]^-{\inn_1^*(\CCone(\delta_{\inn_0}(\inn_0^*(f))))} \\ \inn_1^*(\CCone((\inn_0)_*(\inn_0^*(\R(f))))) & & & \inn_1^*(\CCone((\inn_0)_*(\R(\inn_0^*(f))))), \ar[lll]^-{\Sigma(\gamma_{\inn_0}(f))} }$$  
where for short $\gamma_{\inn_1}$ on the left stands for $\gamma_{\inn_1}(\CCone(f))$ and on the right for $\gamma_{\inn_1}(\CCone((\inn_0)_*(\inn_0^*(f))))$. The upper square commutes since $\inn_1^*(c) \circ \gamma_{\inn_1} (\CCone)$ is a natural transformation and the lower one because of Proposition \ref{commutKan} (ii). \end{proof}

\subsection{Examples}

In this subsection we list some examples to which Theorem \ref{gentriangleequi} applies.

\begin{exmp} \label{oddko}  Let $p$ be an odd prime. Consider the $p$-local real periodic $K$-theory spectrum $KO_{(p)}$. The spectrum $KO_{(p)}$ admits an $A_{\infty}$-structure (even an $E_{\infty}$-structure). The homotopy ring $\pi_*KO_{(p)}$ is isomorphic to the graded Laurent polynomial ring $\Z_{(p)}[v, v^{-1}]$, where the element $v$ has degree $4$. Hence the graded global homological dimension of $\pi_*KO_{(p)}$ is equal to 1 and it is concentrated in degrees divisible by $4$. This implies that the model category $\Mod KO_{(p)}$ together with the compact generator $KO_{(p)} \in \D(KO_{(p)})$ satisfies the conditions of Theorem \ref{gentriangleequi} (see Example \ref{exmpII}). We conclude that the functor
$$\R \colon \D(\Z_{(p)}[v, v^{-1}]) \to \D(KO_{(p)})$$ 
realizes a triangulated equivalence. Finally, we note that the model categories $\Mod KO_{(p)}$ and $\Mod \Z_{(p)}[v, v^{-1}]$ are not Quillen equivalent, since the infinite loop space $BO_{(p)}$ of $KO_{(p)}$ is not a product of Eilenberg-MacLance spaces (see e.g., \cite[Corollary A.1.3]{Pat12}). \end{exmp}

\begin{exmp} \label{morava} Let $p$ be a prime and $n$ a positive integer. Assume that either $p$ is odd or $n \neq 1$. Consider the connective Morava $K$-theory spectrum $k(n)$ at the prime $p$. According to \cite{L03}, the spectrum $k(n)$ admits an $A_{\infty}$-structure. The homotopy ring $\pi_*k(n)$ is isomorphic to the graded polynomial algebra $\mathbb{F}_{p}[v_n]$, where the degree of $v_n$ is equal to $2(p^n-1)$. This implies that $\Mod k(n)$ and the compact generator $k(n) \in \D(k(n))$ satisfy the conditions of Theorem \ref{gentriangleequi}. Hence we conclude that if either $p$ is odd or $n \neq 1$, then the functor 
$$\R \colon \D(\mathbb{F}_{p}[v_n]) \to \D(k(n))$$
is a triangulated equivalence. If $p=2$ and $n=1$, then the functor $\R$ is an equivalence of categories (\cite[Example 5.3.2]{Pat12}), but it still remains open whether $\R$ is triangulated.  

We again note that this triangulated equivalence does not come from a Quillen equivalence (see e.g., Example 5.3.2 of \cite{Pat12}). 

\end{exmp} 

\begin{exmp} \label{JohnsonWilson} Let $p$ be an odd prime again. Then the Johnson-Wilson spectrum $E(1)$ at the prime $p$ (also known as the periodic Adams summand) gives another example to which Theorem \ref{gentriangleequi} applies. Indeed, we know that $E(1)$ admits an $A_{\infty}$-structure (see e.g., \cite{L03}) and we have an isomorphism of graded rings
$$\pi_*E(1) \cong \Z_{(p)}[v_1, v_1^{-1}], \;\;\;\;\;\;$$
where $v_1$ has degree $2(p-1)$. Hence, by Theorem \ref{gentriangleequi}, the functor 
$$\R \colon \D(\Z_{(p)}[v_1, v_1^{-1}]) \to \D(E(1))$$
is a triangulated equivalence. Once again we observe that the corresponding model categories $\Mod \Z_{(p)}[v_1, v_1^{-1}]$ and $\Mod E(1)$ are not Quillen equivalent (see \cite[Example 4.3.2]{Pat12}). 

\end{exmp}

\section{The derived category of $KU_{(p)}$ for an odd prime $p$}

In this section we prove the main result. We show that for any odd prime $p$, the equivalence 
$$\R \colon \D(\pi_*KU_{(p)}) \to \D(KU_{(p)})$$ 
is triangulated. Note that Theorem \ref{gentriangleequi} cannot be applied directly to $\Mod KU_{(p)}$, since $\pi_*KU_{(p)} \cong \Z_{(p)}[u, u^{-1}]$, $|u|=2$, is only $2$-sparse. The idea is to descend to $KO_{(p)}$ whose homotopy groups are $4$-sparse, apply Theorem \ref{gentriangleequi} to $\Mod KO_{(p)}$ and then ascend back to $KU_{(p)}$. The fact that $p$ is odd is essentially used in our argument here, since $\pi_*KO_{(2)}$ has infinite homological dimension and Theorem \ref{gentriangleequi} does not apply to $\Mod KO_{(2)}$. Whether the equivalence $\R \colon \D(\pi_*KU_{(2)}) \to \D(KU_{(2)})$ is triangulated remains still open.  

\subsection{The Galois extension $KO_{(p)} \to KU_{(p)}$}

We will work in this section in the convenient model category of commutative algebra spectra in the sense of \cite{S04}. The main point of this model structure is that a cofibration of commutative $R$-algebras over a commutative symmetric ring spectrum $R$, is also a cofibration of underlying $R$-modules. This model structure is also known with different names like the $R$-model structure or the positive flat model structure. In what follows the words cofibrant and cofibration will refer to the notions in this model structure.  

Fix a cofibrant commutative symmetric ring spectrum model for $KO_{(p)}$. The complexification map gives a map of commutative symmetric ring spectra $\iota \colon KO_{(p)} \to KU_{(p)}$. Without loss of generality we may assume that this map is a cofibration of commutative $KO_{(p)}$-algebras, or equivalently $KU_{(p)}$ is a cofibrant commutative $KO_{(p)}$-algebra. 

The commutative ring map $\iota \colon KO_{(p)} \to KU_{(p)}$ is in fact a $C_2$-\emph{Galois extension} in the sense of \cite{Rog08} (see \cite[Proposition 5.3.1]{Rog08}). We do not really fully need this fact but rather just a consequence of it. By Lemma 9.1.2 of \cite{Rog08}, every Galois extension of commutative ring spectra with a discrete Galois group is a \emph{separable extension} and thus $\iota \colon KO_{(p)} \to KU_{(p)}$ is a separable extension. The latter means that the multiplication map
$$\mu \colon KU_{(p)} \wedge_{KO_{(p)}} KU_{(p)} \to KU_{(p)}$$
which is a map of $KU_{(p)} \wedge_{KO_{(p)}} KU_{(p)}$-modules has a section $\sigma \colon KU_{(p)} \to KU_{(p)} \wedge_{KO_{(p)}} KU_{(p)} $ in the derived category $\D(KU_{(p)} \wedge_{KO_{(p)}} KU_{(p)} )$. This implies the following:

\begin{prop} \label{splitcounit} The counit of the adjunction
$$\xymatrix{ - \wedge_{KO_{(p)}} KU_{(p)}  \colon \D(KO_{(p)}) \ar@<0.5ex>[r] & \D(KU_{(p)}) \cocolon \iota^* \ar@<0.5ex>[l]}$$
has a natural splitting in $\D(KU_{(p)})$. In other words, for any cofibrant $KU_{(p)}$-module $X$, the counit
$$ \varepsilon_X \colon X \wedge_{KO_{(p)}} KU_{(p)} \to X $$
has a section $s_X \colon X \to X \wedge_{KO_{(p)}} KU_{(p)}$ in $\D(KU_{(p)})$ (i.e., $\varepsilon_X s_X=1$ in $\D(KU_{(p)})$) and $s_X$ is natural with respect to morphisms in $\D(KU_{(p)})$.
\end{prop}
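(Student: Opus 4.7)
The plan is to extract the section from separability and promote it to a natural section at every $KU_{(p)}$-module by smashing with the identity.

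First I would identify the counit with a base-change of the multiplication. For a cofibrant $KU_{(p)}$-module $X$, cofibrancy of $\iota$ in the commutative $KO_{(p)}$-algebra model structure ensures that $X$ remains cofibrant as a $KO_{(p)}$-module, so the smash products below compute the derived ones. There is a natural isomorphism of $KU_{(p)}$-modules
$$
(\iota^*X)\wedge_{KO_{(p)}}KU_{(p)} \;\cong\; X\wedge_{KU_{(p)}}\bigl(KU_{(p)}\wedge_{KO_{(p)}}KU_{(p)}\bigr),
$$
where the right factor is regarded as a left $KU_{(p)}$-module via the leftmost factor; under this identification the counit $\varepsilon_X$ becomes $\mathrm{id}_X\wedge_{KU_{(p)}}\mu$, and the outgoing $KU_{(p)}$-action on $X\wedge_{KU_{(p)}}KU_{(p)}\cong X$ is the original one.

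Next I would invoke separability, which by the author's quotation of \cite[Lemma 9.1.2]{Rog08} produces a section
$$\sigma\colon KU_{(p)} \longrightarrow KU_{(p)}\wedge_{KO_{(p)}}KU_{(p)}$$
of $\mu$ in $\D(KU_{(p)}\wedge_{KO_{(p)}}KU_{(p)})$. Because $KU_{(p)}\wedge_{KO_{(p)}}KU_{(p)}$-modules are in particular bimodules over $KU_{(p)}$, the map $\sigma$ is a morphism of $KU_{(p)}$-bimodules in the derived sense, so we may form
$$
s_X \;:=\; \mathrm{id}_X\wedge_{KU_{(p)}}\sigma \colon X \;\cong\; X\wedge_{KU_{(p)}}KU_{(p)} \longrightarrow X\wedge_{KU_{(p)}}\bigl(KU_{(p)}\wedge_{KO_{(p)}}KU_{(p)}\bigr) \;\cong\; (\iota^*X)\wedge_{KO_{(p)}}KU_{(p)}
$$
as a morphism in $\D(KU_{(p)})$ (the right $KU_{(p)}$-action on the target coming from the rightmost factor). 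The equality $\varepsilon_X\circ s_X = \mathrm{id}_X$ is now immediate: it is $\mathrm{id}_X\wedge_{KU_{(p)}}(\mu\circ\sigma) = \mathrm{id}_X\wedge_{KU_{(p)}}\mathrm{id}_{KU_{(p)}}$. Naturality of $s_X$ in $X$ is built into the construction, since $(-)\wedge_{KU_{(p)}}\sigma$ is a natural transformation between functors $\D(KU_{(p)})\to\D(KU_{(p)})$ evaluated at the fixed morphism $\sigma$.

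The main subtlety I expect is bookkeeping rather than content: one has to be careful that $\sigma$, which a priori is only a morphism in a derived category of (possibly non-connective) module spectra, really gives rise to a well-defined natural transformation $\mathrm{id}_{(-)}\wedge_{KU_{(p)}}\sigma$ on $\D(KU_{(p)})$. This is handled by choosing cofibrant replacements once and for all (using the chosen model structure on commutative $KO_{(p)}$-algebras so that $KU_{(p)}$ and hence $KU_{(p)}\wedge_{KO_{(p)}}KU_{(p)}$ are cofibrant as $KU_{(p)}$-modules) and representing $\sigma$ by an honest map of such cofibrant bimodules. Once that representative is fixed, smashing with the identity of a cofibrant $X$ over $KU_{(p)}$ lands directly in the derived category and inherits the splitting identity and the naturality statement.
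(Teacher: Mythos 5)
Your proof is correct and follows essentially the same route as the paper: obtain the section $\sigma$ of the multiplication $\mu$ from separability (via \cite[Lemma 9.1.2]{Rog08}), then apply $X \wedge_{KU_{(p)}} -$ to transport $\sigma$ to a natural section $s_X$ of the counit, using the canonical identifications $X \wedge_{KU_{(p)}} (KU_{(p)} \wedge_{KO_{(p)}} KU_{(p)}) \cong X \wedge_{KO_{(p)}} KU_{(p)}$ and $X \wedge_{KU_{(p)}} KU_{(p)} \cong X$. The extra paragraph on representing $\sigma$ by a map of cofibrant bimodules is sound bookkeeping that the paper leaves implicit.
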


\begin{proof} As already indicated above, this is a formal consequence of the fact that the extension $\iota \colon KO_{(p)} \to KU_{(p)}$ is separable. Indeed, for any cofibrant $KU_{(p)}$-module $X$, we have a functor
$$X \wedge_{KU_{(p)}} - \colon  \D(KU_{(p)} \wedge_{KO_{(p)}} KU_{(p)} ) \to \D(KU_{(p)}).$$ 
Observe that there are canonical isomorphisms 
$$X \wedge_{KU_{(p)}} (KU_{(p)} \wedge_{KO_{(p)}} KU_{(p)}) \cong X \wedge_{KO_{(p)}} KU_{(p)}$$ 
and 
$$X \wedge_{KU_{(p)}} KU_{(p)} \cong X$$ 
in $\D(KU_{(p)})$. From here one can see that the functor $X \wedge_{KU_{(p)}} -$ sends the multiplication map $\mu \colon KU_{(p)} \wedge_{KO_{(p)}} KU_{(p)} \to KU_{(p)}$ up to a natural isomorphism to the counit 
$$\varepsilon_X \colon X \wedge_{KO_{(p)}} KU_{(p)} \to X.$$ 
Define $s_X$ as the composite
$$\xymatrix{X \cong X \wedge_{KU_{(p)}} KU_{(p)} \ar[rr]^-{X \wedge_{KU_{(p)}}\sigma} & & X \wedge_{KU_{(p)}} (KU_{(p)} \wedge_{KO_{(p)}} KU_{(p)}) \cong X \wedge_{KO_{(p)}} KU_{(p)}}$$
Since $\sigma$ splits $\mu$, we get the identity $\varepsilon_X s_X=1$ in $\D(KU_{(p)})$.  \end{proof}

\begin{coro} \label{retracttr} A triangle
$$\xymatrix{X \ar[r]^-f & Y \ar[r]^-g & Z \ar[r]^-h & \Sigma X}$$
in $\D(KU_{(p)})$ is distinguished if and only if 
the triangle 
$$\xymatrix{\iota^*(X) \ar[r]^-{\iota^*(f)} & \iota^*(Y) \ar[r]^-{\iota^*(g)} & \iota^*(Z) \ar[r]^-{\iota^*(h)} & \iota^*(\Sigma X) = \Sigma \iota^*(X)}$$
in $\D(KO_{(p)})$ is distinguished. 

\end{coro}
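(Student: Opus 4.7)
The plan is to handle the two implications separately; only the backward direction is substantive. For $(\Rightarrow)$, the restriction functor $\iota^* \colon \D(KU_{(p)}) \to \D(KO_{(p)})$ is triangulated because, at the point-set level, restriction of scalars along $\iota$ preserves weak equivalences and commutes with smashing by $S^1$ and with the mapping cone construction; therefore it sends elementary distinguished triangles to elementary distinguished triangles, so the image triangle is distinguished whenever the original one is.

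For $(\Leftarrow)$, the first step is to apply the left adjoint $- \wedge_{KO_{(p)}} KU_{(p)}$ from the adjunction in Proposition \ref{splitcounit} to the distinguished triangle obtained from the original one by $\iota^*$. Since this functor is the left derived functor of a left Quillen functor between stable model categories, it is triangulated, so we obtain a distinguished triangle in $\D(KU_{(p)})$:
\[
\iota^*(X) \wedge_{KO_{(p)}} KU_{(p)} \to \iota^*(Y) \wedge_{KO_{(p)}} KU_{(p)} \to \iota^*(Z) \wedge_{KO_{(p)}} KU_{(p)} \to \Sigma\bigl(\iota^*(X) \wedge_{KO_{(p)}} KU_{(p)}\bigr).
\]

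The second step is to invoke Proposition \ref{splitcounit}: the natural sections $s_X, s_Y, s_Z$ and the counits $\varepsilon_X, \varepsilon_Y, \varepsilon_Z$ assemble into vertical maps of candidate triangles realizing the original triangle as a retract of the distinguished triangle above. Naturality of $s$ (respectively of $\varepsilon$) with respect to the morphisms $f$, $g$, $h$ guarantees horizontal commutativity of the retract diagram, and the relations $\varepsilon_X s_X = \id$, $\varepsilon_Y s_Y = \id$, $\varepsilon_Z s_Z = \id$ ensure that the vertical composites are identities. Since $\D(KU_{(p)})$ has infinite coproducts it is idempotent complete, and so retracts of distinguished triangles are distinguished; this finishes the proof.

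The one subtle point I expect is verifying commutativity of the rightmost column under the canonical identification $\Sigma\bigl(\iota^*(X) \wedge_{KO_{(p)}} KU_{(p)}\bigr) \cong \iota^*(\Sigma X) \wedge_{KO_{(p)}} KU_{(p)}$, i.e., that $s_{\Sigma X}$ agrees with the suspension of $s_X$ (and analogously for $\varepsilon$). This should fall out of the naturality of $s$ and $\varepsilon$ together with the fact that the extension-of-scalars functor is triangulated and hence canonically commutes with $\Sigma$. Nothing else in the argument presents any real difficulty.
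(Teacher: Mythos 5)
Your proof is correct and takes essentially the same approach as the paper: the forward direction uses exactness of $\iota^*$, and the backward direction applies the exact functor $- \wedge_{KO_{(p)}} KU_{(p)}$, realizes the original triangle as a retract of the resulting distinguished triangle via the natural splitting from Proposition \ref{splitcounit}, and concludes since retracts of distinguished triangles are distinguished. You simply spell out details that the paper's two-sentence proof leaves implicit.
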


\begin{proof} Since the functor $\iota^*$ is exact, one part of the claim is straightforward. The other part follows from Proposition \ref{splitcounit} together with the facts that $- \wedge_{KO_{(p)}} KU_{(p)}$ is exact and a retract of a distinguished triangle is distinguished. 

\end{proof}

\begin{remk} We note that the proof of the above proposition can be applied to any separable extension. This implies that for any separable extension, the derived counit of the change of scalars adjunction has a natural splitting. 

\end{remk}

\subsection{Proof of the main result} \label{mainresultsubsec}

Let $p$ be an odd prime throughout. In this subsection we show that the diagram
$$\xymatrix{\D(\pi_*KU_{(p)}) \ar[d]_-{\iota^*} \ar[r]^-{\R} & \D(KU_{(p)}) \ar[d]^-{\iota^*} \\ \D(\pi_*KO_{(p)}) \ar[r]^-{\R} & \D(KO_{(p)})}$$
commutes up to a natural isomorphism which is compatible with suspensions. This will imply the desired result by Corollary \ref{retracttr} since the vertical functors are well-known to be triangulated and the lower horizontal functor is triangulated by Theorem \ref{gentriangleequi} (see Example \ref{oddko}). 

Before showing that the diagram commutes, we will make the suspension isomorphisms more explicit. The natural isomorphism
$$a \colon \R \Sigma \cong \Sigma \R$$
for $\R \colon \D(\pi_*KO_{(p)}) \to \D(KO_{(p)})$ was constructed using the derivator structure. It was essential that we could extend $\R$ to a partial morphism of derivators defined on one dimensional diagrams. However, one cannot use the same strategy to construct a suspension isomorphism for the functor
$\R \colon \D(\pi_*KU_{(p)}) \to \D(KU_{(p)})$ 
since $\pi_*KU_{(p)}$ is only $2$-sparse and the results of Section \ref{trsec} do not apply to this case. Instead we have to employ the suspension isomorphism described in \cite[Proposition 3.5.1]{Pat12} which works in more general settings. This isomorphism is constructed as follows: Let $\M$ be a simplicial stable model category with a compact generator $S \in \Ho(\M)$. Suppose that the graded ring $\pi_*S=[S,S]_*$ is $N$-sparse for $N \geq 2$ and the graded global homological dimension of $\pi_*S$ is less than $N$. Under these conditions, we have Franke's functor
$$\R \colon \D(\pi_*S) \to \Ho(\M)$$
and the natural isomorphism $\pi_* \circ \R \cong H_*$ (see Subsection \ref{introFranke}). The full subcategory $\K$ of $\Ho(\M^{C_N})$ has an autoequivalence $\# \colon \K \to \K$. The functor $\#$ suspends and rotates once an object of $\K$. More precisely, 
$$ X^\#_{ \beta_i} = S^1 \wedge X_{\beta_{i-1}}, \;\;\;\;\;\; X^\#_{ \zeta_i} = S^1 \wedge X_{\zeta_{i-1}},$$
$$ k^\#_i= S^1 \wedge k_{i-1}, \;\;\; l^\#_i= S^1 \wedge l_{i-1}, \;\;\; i \in \Z/N\Z.$$
By the construction of the functor $\Q \colon \K \to \Mod \pi_*S$, the suspension isomorphism for $\pi_*(-)$ gives a natural isomorphism
$$\xymatrix{\xi' \colon [1] \circ \Q \ar[r]^-{\cong} & \Q \circ \#.}$$
On the other hand, we have the canonical natural isomorphism
$$\xymatrix{\xi'' \colon \Hocolim \circ \# \ar[r]^-{\cong} & \Sigma \circ \Hocolim}$$
coming from the definition of the homotopy colimit. Now recall that we made a choice of the inverse $\Q^{-1} \colon \Q(\K) \to \K$. This choice is done in such a way that $\Q^{-1}$ is left adjoint to $\Q$ and the unit  
$$\xymatrix{\eta \colon \id \ar[r]^-{\cong} & \Q \Q^{-1}}$$
and the counit
$$\xymatrix{\varepsilon \colon \Q^{-1} \Q \ar[r]^-{\cong} & \id}$$
are fixed once and for all. Fixing these choices allows us to form the mate
$$(\xi')_{!} \colon \Q^{-1} \circ [1] \rightarrow \# \circ \Q^{-1}$$
which is a natural isomorphism. Pasting $(\xi')_{!}$ and $\xi''$
$$\xymatrix{
\Q(\K) \ar[r]^-{\Q^{-1}} \ar[d]_{[1]}   & \K  \ar[r]^-{\tiny{\Hocolim}} \ar[d]^{\#}  &  \Ho(\M) \ar[d]^{\Sigma}  \\
\Q(\K) \ar[r]^-{\Q^{-1}}      & \K  \ar[r]^-{\tiny{\Hocolim}} \xtwocell[-1,-1]{}\omit     &  \Ho(\M)  \xtwocell[-1,-1]{}\omit  
}$$
we get a natural transformation $\xi'' \odot (\xi')_{!}$ which induces a natural isomorphism
$$\xymatrix{\xi \colon \R \circ [1]  \ar[r]^-{\cong} &  \Sigma \circ \R}$$
after passing to the derived categories. 

If we apply this construction to $\Mod KU_{(p)}$ and $\Mod KO_{(p)}$, we get natural isomorphisms $\xi_{KU} \colon \R \circ  [1] \cong \Sigma \circ \R$ and $\xi_{KO} \colon \R \circ [1] \cong \Sigma \circ \R$. On the other hand, we also have the natural isomorphism
$$a \colon \R \Sigma \cong \Sigma \R$$
for $KO_{(p)}$, constructed in Subsection \ref{preservetr}. By Theorem \ref{gentriangleequi}, the functor 
$$\R \colon \D(\pi_*KO_{(p)}) \to \D(KO_{(p)})$$ 
together with the natural isomorphism $a \colon \R \Sigma \cong \Sigma \R$ is a triangulated equivalence. In order to be able to show that the equivalence
$$\R \colon \D(\pi_*KU_{(p)}) \to \D(KU_{(p)})$$ 
is triangulated, we have to relate the natural transformation $\xi_{KU}$ with the natural transformation $a$. We will see below how $\xi_{KU}$ corresponds to $\xi_{KO}$ under the restriction functor
$$\iota^* \colon \D(KU_{(p)}) \rightarrow  \D(KO_{(p)}).$$
This leaves us with the task to see a connection between $\xi_{KO}$ and $a$. By Proposition \ref{Dertriang} (ii) the suspensions defined in Subsection \ref{stmodexmp} and the derivator-theoretic suspensions from Subsection \ref{Constrdertriangles} coincide. In other words, we have 
$$\Sigma=[1] \colon \D(\pi_*KO_{(p)}) \rightarrow \D(\pi_*KO_{(p)})$$  
and 
$$\Sigma = S^1 \wedge -   \colon \D(KO_{(p)}) \rightarrow \D(KO_{(p)}).$$
We remind the reader that this can be achieved by appropriately choosing the left and right homotopy Kan extensions in the derivator structure of given model categories. Now once again one can choose appropriately $\Q^{-1}$ in the construction of $\R$ and show explicitly that $\xi_{KO}$ and $a$ coincide. However, this is another lengthy calculation which will overload the paper with unnecessary technicalities. Instead we show using much more conceptual methods that the natural transformations $\xi_{KO}$ and $a$ only differ up to a fixed unit in $\mathbb{Z}_{(p)}$. This is a weaker statement but it is enough for our purposes. The desired result about the connection between  $\xi_{KO}$ and $a$ now immediately follows from the proposition below which we believe is of independent interest.
\begin{theo} \label{selfequiKO} Let $p$ be an odd prime. The automorphism group (of not necessarily triangulated natural automorphisms) of the identity functor $1 \colon \D(KO_{(p)}) \rightarrow \D(KO_{(p)})$ is isomorphic to the group of units $\mathbb{Z}_{(p)}^{\times}$.  
\end{theo}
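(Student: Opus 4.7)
My plan is to transport the question to the algebraic side via Franke's equivalence and then pin down the center using a family of non-formal DG modules detected by the Adams spectral sequence. By Theorem \ref{gentriangleequi} applied to $KO_{(p)}$ (see Example \ref{oddko}), the functor $\R \colon \D(\pi_*KO_{(p)}) \to \D(KO_{(p)})$ is an equivalence of ordinary categories and hence induces a group isomorphism on the automorphism groups of the identity. Setting $A := \pi_*KO_{(p)} \cong \Z_{(p)}[v^{\pm 1}]$ with $|v|=4$ and zero differential, it therefore suffices to prove $\operatorname{Aut}(\id_{\D(A)}) \cong \Z_{(p)}^{\times}$. Multiplication by $r \in \Z_{(p)}^{\times} = A_0^{\times}$ on every DG module defines an injective homomorphism $\Psi \colon \Z_{(p)}^{\times} \to \operatorname{Aut}(\id_{\D(A)})$, and only surjectivity is nontrivial.

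Given a natural automorphism $\tau$, evaluation at the compact generator $A$ yields $\tau_A \in H_0(A)^{\times} = A_0^{\times} = \Z_{(p)}^{\times}$; call this element $r$. After replacing $\tau$ by $\Psi(r)^{-1}\tau$, I may assume $\tau_A = 1$ and aim to prove $\tau = \id$. Because $A$ is a compact generator of $\D(A)$, the natural automorphism $\tau$ is determined by the scalars $\tau_{\Sigma^n A} \in \Z_{(p)}$ for $n \in \Z$ together with naturality with respect to all morphisms. For $n \equiv 0 \pmod 4$, the Bott map $v^{n/4} \colon \Sigma^n A \xrightarrow{\sim} A$ is an isomorphism in $\D(A)$, and naturality of $\tau$ against it forces $\tau_{\Sigma^n A} = \tau_A = 1$ at once.

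The heart of the proof is the case $n \not\equiv 0 \pmod 4$, where $[\Sigma^n A, A] = [A, \Sigma^n A] = 0$ and the naive argument produces no constraint. For each such $n \in \{1,2,3\}$ and each integer $k \geq 2$ I would construct an explicit non-formal DG $A$-module $M_n^{(k)}$ whose homology is supported in mod-$4$ degrees $0$ and $n$ with $H_0 M_n^{(k)} \cong H_n M_n^{(k)} \cong \Z_{(p)}/p^{k-1}$, coupled by a non-trivial Bockstein-type extension; concretely, take the $4$-periodic two-term complex $\Z_{(p)}/p^k \xrightarrow{p^{k-1}} \Z_{(p)}/p^k$ placed in the relevant pair of degrees. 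A direct computation shows that $\Endd_{\D(A)}(M_n^{(k)})$ is a fibered product ring of the form $\Z_{(p)}/p^{k-1} \times_{\mathbb{F}_p} \Z_{(p)}/p^{k-1}$, in which any element acts on $H_0$ and $H_n$ by scalars that must agree modulo $p$. Naturality of $\tau$ against the canonical maps $A \to M_n^{(k)}$ and $\Sigma^n A \to M_n^{(k)}$ (detecting generators of $H_0$ and $H_n$ respectively) then forces $\tau_{\Sigma^n A} \equiv 1 \pmod p$. Assembling the $M_n^{(k)}$ into a coherent tower under the reduction maps $M_n^{(k+1)} \to M_n^{(k)}$ and passing to the homotopy limit $M_n^{\infty} := \operatorname{holim}_k M_n^{(k)}$ (whose homology becomes $\Z_p$ in degrees $0$ and $n$) should upgrade the congruence to the exact equality $\tau_{\Sigma^n A} = 1$ in $\Z_{(p)}$.

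Finally, once $\tau_{\Sigma^n A} = 1$ for every $n$, naturality of $\tau$ against morphisms from shifted generators shows that $\tau_X - \id_X$ induces the zero map on $H_*(-)$ for every $X \in \D(A)$. The Adams spectral sequence of Corollary \ref{ASS}, which collapses because $\gl A = 1$, then rules out nontrivial such endomorphisms, giving $\tau_X = \id_X$ for every $X$ and completing surjectivity. The main obstacle is the upgrade step from the mod-$p$ congruence (given by a single non-formal object) to equality in $\Z_{(p)}$: this requires carefully controlling the coherence of the tower of non-formal modules and a Milnor-type $\lim^1$ argument applied to the endomorphism rings of the homotopy limits.
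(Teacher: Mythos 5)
Your overall strategy---reduce to the scalars $\tau_{\Sigma^n A}$ and constrain them by probing non-formal modules---is the right idea and is in the same spirit as the paper, which probes with the modules $M(p^l)\wedge KO_{(p)}$. But the central computational claim is wrong. You assert that $\Endd_{\D(A)}(M_n^{(k)})$ is the fibered product $\Z_{(p)}/p^{k-1}\times_{\mathbb{F}_p}\Z_{(p)}/p^{k-1}$, so that \emph{any} endomorphism acts on $H_0$ and $H_n$ by scalars agreeing mod $p$. This is false: the universal coefficient sequence of Corollary \ref{ASS} exhibits $[M_n^{(k)},M_n^{(k)}]$ as an extension of $\Hom_A(H_*M_n^{(k)},H_*M_n^{(k)})\cong(\Z/p^{k-1})^2$ by the phantom ideal $\Ext^1_A(H_*M_n^{(k)}[1],H_*M_n^{(k)})$, with the map onto the $\Hom$-term \emph{surjective}; so every pair $(a,b)$ is realised (already for $k=2$ your fibered product has order $p$ while the actual group has order $p^3$). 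Naturality of $\tau$ against the two detection maps alone therefore gives no congruence. What you actually need is the centrality of $\tau_{M_n^{(k)}}$: commuting with a generator of the phantom ideal forces the $H_0$-scalar and $H_n$-scalar to agree in $\Ext^1(\Z/p^{k-1},\Z/p^{k-1})\cong\Z/p^{k-1}$, i.e.\ mod $p^{k-1}$, which is much stronger than mod $p$ and makes your entire $\operatorname{holim}$--$\lim^1$ paragraph superfluous. But note a further snag: for $n=2$ the phantom ideal of $M_2^{(k)}$ vanishes for degree reasons (the supports of $H_*M[1]$ and $H_*M$ then miss each other mod $4$), so that case gives no information and you must chain through adjacent congruence classes $c_0=c_1$, $c_1=c_2$, $c_2=c_3$, exactly as the paper does.

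The concluding step is also incomplete. With $\gl A=1$ the Adams spectral sequence does collapse at $E_2$, but the $\Ext^1$ line survives; knowing $\tau_X-\id_X$ is zero on $H_*$ only places it in the phantom filtration, it does not kill it. The paper handles this with the UCT splitting $X\cong X_0\vee X_1\vee X_2\vee X_3$, where $\pi_*X_i$ is concentrated in a single mod-$4$ class so that the $\Ext^1$-term of $[X_i,X_i]$ vanishes for degree reasons and there is no phantom ambiguity; you need this (or an equivalent) step. For comparison, the paper never computes endomorphism rings of non-formal objects: it observes directly from naturality that $(c_i-c_j)$ annihilates every $[X_i,X_j]$ for objects in distinct adjacent congruence classes (these groups are pure $\Ext^1$), and then produces $\Z/p^l$ among them with $X_i=M(p^l)\wedge KO_{(p)}$ and $X_j=\Sigma KO_{(p)}$, letting $l\to\infty$. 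That route, working in $\D(KO_{(p)})$ without first passing through Franke's equivalence, is considerably more economical than your proposal even after the corrections above.
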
 

\begin{proof} The proof is based on the fact that the graded global homological dimension of $\pi_* KO_{(p)}$ is equal to $1$. By Corollary \ref{ASS} we have the universal coefficient exact sequence for any $KO_{(p)}$-modules $X$ and $Y$:
$$ \xymatrix{ 0 \ar[r] & \Ext^{1}_{\pi_*KO_{(p)}}( \pi_*X[1], \pi_*Y) \ar[r] & [X,Y] \ar[r]^-{\pi_*} & \Hom_{\pi_*KO_{(p)}} (\pi_*X, \pi_*Y) \ar[r] & 0.}$$
The universal coefficient exact sequence implies that for any object $X \in \D(KO_{(p)})$, there is a splitting in $\D(KO_{(p)})$
$$X \cong X_0 \vee X_1 \vee X_2 \vee X_3,$$ 
where for any $i$, the graded module $\pi_*X_i$ is concentrated in degrees congruent to $i$ modulo $4$. Given now a natural automorphism $\lambda \colon 1 \to 1$ of the identity functor, the strategy is to show that it is constant for objects whose homotopy groups are concentrated in degrees congruent to a fixed $i$ modulo $4$. Then we have to show that these constants coincide for different congruence classes modulo $4$. 

By adjunction the endomorphism ring $[KO_{(p)}, KO_{(p)}]_0$ is isomorphic to the ring $\pi_0 KO_{(p)}$. The latter is isomorphic to $\mathbb{Z}_{(p)}$. Hence any automorphism of $KO_{(p)}$ in $\D(KO_{(p)})$ is given by a unit in $\mathbb{Z}_{(p)}$. In particular, the automorphism $\lambda_{KO_{(p)}} \colon KO_{(p)} \to KO_{(p)}$ is equal to $c_0= c_0 \cdot 1_{KO_{(p)}}$ for some $c_0 \in \mathbb{Z}_{(p)}^{\times}$. We will now show that for any $KO_{(p)}$-module $X$, the morphism $\lambda_X$ is equal to $c_0= c_0 \cdot 1_{X}$. 

First suppose that $\pi_*X$ is concentrated in degrees divisible by $4$. The universal coefficient formula implies that
$$\pi_* \colon [X,X] \rightarrow \Hom_{\pi_*KO_{(p)}}(\pi_*X, \pi_*X)$$
is an isomorphism. Since $\pi_*KO_{(p)} \cong \mathbb{Z}_{(p)}[v, v^{-1}]$, where $v$ has degree $4$, it follows that the group $\Hom_{\pi_*KO_{(p)}}(\pi_*X, \pi_*X)$ is isomorphic to $\Hom_{\mathbb{Z}}(\pi_0X, \pi_0X)$. Hence, one concludes that
$$\pi_0 \colon [X,X] \rightarrow \Hom_{\mathbb{Z}}(\pi_0X, \pi_0X)$$ 
is an isomorphism and in order to show that $\lambda_X=c_0$, it suffices to prove that $\pi_0(\lambda_X)=c_0$. This follows from the naturality of $\lambda$. Indeed, take any $x \in \pi_0X$. We can represent $x$ by a morphism $x \colon KO_{(p)} \rightarrow X$ (denoted by the same symbol $x$) in $\D(KO_{(p)})$. Since $\lambda$ is a natural transformation, the diagram
$$\xymatrix{\pi_0 KO_{(p)} \ar[r]^-{x_*} \ar[d]_{(\lambda_{KO_{(p)}})_*} & \pi_0X \ar[d]^{(\lambda_X)_*} \\  \pi_0 KO_{(p)} \ar[r]^-{x_*} & \pi_0X}$$
commutes. We compute
$$(\lambda_X)_*(x)= (\lambda_X)_*(x_*(1))= x_*( (\lambda_{KO_{(p)}})_*(1))= x_*(c_0)= c_0 \cdot x.$$
This shows that $\lambda_X=c_0$ if $X$ is such that $\pi_*X$ is concentrated in degrees divisible by $4$. Similarly, using the fact that for any $i \in \{1,2,3\}$ the endomorphism ring of $\Sigma^i KO_{(p)}$ is isomorphic to $\pi_0 KO_{(p)}=\mathbb{Z}_{(p)}$ and the fact that $\pi_i$ is represented by $\Sigma^i KO_{(p)}$, we can conclude that $\lambda_X=c_i$, for a constant $c_i \in  \mathbb{Z}_{(p)}^{\times}$, if $\pi_*X$ is concentrated in degrees congruent to $i$ modulo $4$. 

Next, one needs to show that all the constants are the same, i.e., 
$$c_0=c_1=c_2=c_3.$$
We will only show that $c_0=c_1$ since the other identities are proved similarly. Suppose we are given $X_0$ and $X_1$, such that $\pi_*X_i$ is concentrated in degrees congruent to $i$ modulo $4$ and $i=0,1$. Then by naturality, for any morphism $f \colon X_0 \to X_1$ in $\D(KO_{(p)})$, the diagram
$$\xymatrix{X_0 \ar[r]^{f} \ar[d]_{c_0=\lambda_{X_0}} & X_1 \ar[d]^{c_1=\lambda_{X_1}} \\ X_0 \ar[r]^{f} & X_1}$$
commutes. Hence $(c_0 - c_1)f=0$ and thus we conclude that the number $c_0 - c_1$ annihilates $[X_0, X_1]$. The universal coefficient exact sequence and periodicity tells us that $[X_0, X_1]$ is isomorphic to $\Ext^{1}_{\Z}(\pi_0X_0, \pi_1X_1)$, implying that $c_0 - c_1$ annihilates the $\Ext$ group $\Ext^{1}_{\Z}(\pi_0X_0, \pi_1X_1)$. Let $\frac{m}{n}=c_0-c_1$, for $m, n \in \Z$. The numerator $m$ also annihilates $\Ext^{1}_{\Z}(\pi_0X_0, \pi_1X_1)$. We will now show that $m$ is infinitely divisible by $p$ which will imply that $m=0$ and hence $c_0 - c_1=0$.

Consider the $KO_{(p)}$-module $M(p^l) \wedge KO_{(p)}$, for any $l \geq 1$, where $M(p^l)$ is the mod $p^l$ Moore spectrum. Then 
the homotopy groups of $M(p^l) \wedge KO_{(p)}$ are concentrated in degrees divisible by $4$. One has,
$$\Ext^{1}_{\Z}(\pi_0(M(p^l) \wedge KO_{(p)}), \pi_1(\Sigma  KO_{(p)})) \cong \Ext^{1}_{\Z}(\Z/p^l\Z, \Z_{(p)}) \cong \Z/p^l\Z.$$
Thus the numerator $m$ of $c_0-c_1$ annihilates $\Z/p^l\Z$ for any $l$. Hence $m$ is infinitely divisible by $p$, implying that it is zero.

Let $c$ denote the common value $c_0=c_1=c_2=c_3$. Take any $KO_{(p)}$-module $X$ and consider the splitting
$$X \cong X_0 \vee X_1 \vee X_2 \vee X_3,$$
described at the beginning of the proof. By naturality, the diagram
$$\xymatrix{X_0 \vee X_1 \vee X_2 \vee X_3 \ar[r]^-{\cong} \ar[d]_{\lambda_{X_0} \vee \lambda_{X_1} \vee \lambda_{X_2} \vee \lambda_{X_3}} &  X \ar[d]^{\lambda_X} \\  X_0 \vee X_1 \vee X_2 \vee X_3 \ar[r]^-{\cong} & X }$$
commutes. Here the upper and lower isomorphisms are the same. The previous paragraph tells us that $\lambda_{X_i}=c$ for $i=0,1,2,3$. Hence $\lambda_X=c$. 
 
\end{proof}

\begin{remk} The reader will notice from the proof that the latter theorem is much more general than it is presented here. First of all, there is no need to restrict to automorphisms. Any endomorphism of the identity functor is also given by multiplication on a (not necessarily invertible) number. Further, a similar theorem holds for more general ring spectra than $KO_{(p)}$. For example, it also holds for $KU_{(p)}$ or $KU$. We will not use any of these observations in this paper and therefore do not go into details. 

\end{remk}

\begin{prop} \label{KUKOcom} Let $p$ be an odd prime. The diagram of categories
$$\xymatrix{\D(\pi_*KU_{(p)}) \ar[d]_-{\iota^*} \ar[r]^-{\R} & \D(KU_{(p)}) \ar[d]^-{\iota^*} \\ \D(\pi_*KO_{(p)}) \ar[r]^-{\R} & \D(KO_{(p)})}$$
commutes up to a natural isomorphism.
\end{prop}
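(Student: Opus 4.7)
The plan is to produce an explicit natural isomorphism $\iota^* \R \cong \R \iota^*$ by comparing the Franke $\C_2$-diagram used to build $\R$ on the $KU_{(p)}$-side, after restriction, with the Franke $\C_4$-diagram used on the $KO_{(p)}$-side. Both composites satisfy $\pi_*(-) \cong \iota^* H_*(-)$ naturally by the defining property $\pi_* \R \cong H_*$ of Franke's functor, which shows pointwise isomorphism; the task is to promote this to a natural isomorphism. The crucial algebraic input is that the block decomposition of $\Grmod \pi_*KO_{(p)}$ into the four residue classes $\B_{KO}[i]$ modulo $4$ forces $\Ext^1$ between distinct blocks to vanish, and hence via the universal coefficient spectral sequence (using $\gl \pi_*KO_{(p)} = 1$) forces canonical splittings of objects and morphisms in $\D(KO_{(p)})$ along the residue decomposition whenever such a decomposition exists on $\pi_*$.

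For $M \in \D(\pi_*KU_{(p)})$, let $X := \Q_{KU}^{-1}(M) \in \L_{KU}$ be the corresponding $\C_2$-diagram in $\Mod KU_{(p)}$; then $\iota^* \R(M) \simeq \Hocolim_{\C_2}(\iota^* X)$ since $\iota^*$ commutes with homotopy colimits. At each vertex, $\pi_*(\iota^* X_v)$ lies in $\B_{KO}[i] \oplus \B_{KO}[i+2]$ for the appropriate $i \in \{0,1\}$, so $\iota^* X_v$ splits canonically as $(\iota^* X_v)^{(i)} \vee (\iota^* X_v)^{(i+2)}$ in $\D(KO_{(p)})$; a direct residue-class analysis shows that each $\iota^* l_i^X$ splits purely into $\Hom$-diagonal components $l_i^Y \oplus l_{i+2}^Y$, while each $\iota^* k_i^X$ splits purely into $\Ext^1$-components $k_i^Y \oplus k_{i+2}^Y$. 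Reassemble the four pieces into a $\C_4$-diagram $Y$ by setting $Y_{\beta_j} := (\iota^* X_{\beta_{j \bmod 2}})^{(j)}$ and $Y_{\zeta_j} := (\iota^* X_{\zeta_{j \bmod 2}})^{(j)}$ for $j \in \Z/4\Z$, with structure maps the split components.

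One checks $Y \in \L_{KO}$ directly, injectivity of the $l_j^Y$ on $\pi_*$ following from the corresponding property of the $l_i^X$. A calculation using $\iota^* \Cone(k_i^X) \simeq \Cone(k_i^Y) \vee \Cone(k_{i+2}^Y)$ and the compatibility of the $l$-splicings that define the differentials on $\Q_{KO}(Y)$ and on $\iota^* \Q_{KU}(X)$ yields an isomorphism $\Q_{KO}(Y) \cong \iota^* M$ of differential graded $\pi_*KO_{(p)}$-modules, so that $\R(\iota^* M) \simeq \Hocolim_{\C_4} Y$. Finally, the identifications $\bigvee_{j \in \Z/4\Z} Y_{\beta_j} \simeq \iota^*(X_{\beta_0} \vee X_{\beta_1})$ and its $\zeta$-analogue, together with the fact that under them the aggregated structure map on the $\C_4$-side becomes $\iota^*$ of the aggregated $\C_2$-structure map, identify the two total cofibers and give $\Hocolim_{\C_4} Y \simeq \Hocolim_{\C_2}(\iota^* X)$ in $\D(KO_{(p)})$.

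All of the steps above are natural in $M$: the canonical splittings are unique wherever they exist (since the obstruction $\Ext^1$ vanishes), the reassembly is functorial bookkeeping, and homotopy colimits are natural. Combining these gives the desired natural isomorphism $\iota^* \R \cong \R \iota^*$. The main obstacle will be the careful matching of differentials and structure maps under the splittings, in particular verifying the isomorphism $\Q_{KO}(Y) \cong \iota^* M$ of differential graded modules; this is the step that most sensitively relies on the vanishing of $\Ext^1$ between distinct $\B_{KO}$-blocks and is consequently where the odd-primality of $p$ enters in an essential way.
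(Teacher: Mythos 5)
Your residue-class analysis is the algebraic heart of the matter and it is correct: the vanishing of $\Hom$ and $\Ext^1$ over $\pi_*KO_{(p)}$ between graded modules in distinct residue classes mod $4$ forces the structure maps of $\iota^*X$ to split along the block decomposition, and this is precisely what powers the paper's proof as well. However, there is a genuine gap in the construction of $Y$. You build $Y$ by splitting each vertex $\iota^*X_v$ in the triangulated category $\D(KO_{(p)})$ and then assembling the pieces with the split components of the structure maps. The result is a diagram in the \emph{naive} sense — a functor $\C_4 \to \D(KO_{(p)})$, i.e.\ an object of $\D(KO_{(p)})^{\C_4}$. But $\L_{KO}$, $\Q_{KO}$, and $\Hocolim_{\C_4}$ all take as input a \emph{coherent} $\C_4$-diagram, i.e.\ an object of $\Ho((\Mod KO_{(p)})^{\C_4})$. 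The comparison functor $\dia \colon \Ho((\Mod KO_{(p)})^{\C_4}) \to \D(KO_{(p)})^{\C_4}$ is full and essentially surjective but not an equivalence, so the assertions ``$Y \in \L_{KO}$,'' ``$\Q_{KO}(Y)\cong\iota^*M$,'' and ``$\Hocolim_{\C_4}Y\simeq\cdots$'' are not meaningful for the incoherent $Y$ that your splitting actually produces. In particular, your step $\iota^*\Cone(k_i^X)\simeq\Cone(k_i^Y)\vee\Cone(k_{i+2}^Y)$ and the assembly of the differential on $\Q_{KO}(Y)$ both require choices that only the coherent data determine. Promoting $Y$ to a coherent $\C_4$-diagram (e.g.\ by lifting the splitting idempotents through the Adams spectral sequence for $\Ho((\Mod KO_{(p)})^{\C_2})$ and unfolding) is exactly the part your argument leaves implicit.

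The paper avoids this issue entirely by working in the \emph{opposite} direction: instead of splitting a $\C_2$-diagram into a $\C_4$-diagram, it merges. It defines a functor $F \colon \K_{KO} \to \K_{KO}^{(2)}$ on coherent diagram categories by $F(X)_{\zeta_0}=X_{\zeta_0}\vee X_{\zeta_2}$ etc., which is unproblematic because wedging and the relevant reindexing are genuine operations on $\Ho((\Mod KO_{(p)})^{\C_N})$. This yields a $\C_2$-based model $\R_{KO}^{(2)}\cong\R_{KO}$, and then $\iota^*\R_{KU}\cong\R_{KO}^{(2)}\iota^*$ is a direct pasting of natural isomorphisms, since $\iota^*$ acts on coherent $\C_2$-diagrams, commutes with $\Hocolim$, and satisfies $\iota^*\Q_{KU}\cong\Q_{KO}\iota^*$ essentially by definition. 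Your argument would close up if you replaced the ad hoc reassembled $Y$ by $\tilde Y:=\Q_{KO}^{-1}(\iota^*M)$ (coherent by the equivalence $\Q\vert_{\K_{KO}}$) and then verified $F(\tilde Y)\cong\iota^*X$ by full faithfulness of $\Q$ on $\K_{KO}^{(2)}$ — but at that point it is the paper's proof.
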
 

\begin{proof} We remind the reader that we have chosen the map $\iota \colon KO_{(p)} \to KU_{(p)}$ so that it is a cofibration of commutative $KO_{(p)}$-algebras in the sense of \cite{S04}. In particular, $KU_{(p)}$ is cofibrant when considered as a $KO_{(p)}$-module via the map $\iota$. For convenience, we denote from now on the functor 
$$\R \colon \D(\pi_*KU_{(p)}) \to \D(KU_{(p)})$$
by $\R_{KU}$ and the functor
$$\R \colon \D(\pi_*KO_{(p)}) \to \D(KO_{(p)})$$
by $\R_{KO}$. Let us recall the constructions of $\R_{KU}$ and $\R_{KO}$: We have two compositions 
$$\xymatrix{ \Q(\K_{KU}) \ar[rr]^-{\Q^{-1}} & & \K_{KU} \subseteq \Ho((\Mod KU_{(p)})^{\C_2}) \ar[rr]^-{\Hocolim} & & \D(KU_{(p)})}$$
and
$$\xymatrix{ \Q(\K_{KO}) \ar[rr]^-{\Q^{-1}} & & \K_{KO} \subseteq \Ho((\Mod KO_{(p)})^{\C_4}) \ar[rr]^-{\Hocolim} & & \D(KO_{(p)}),}$$
denoted by $\R'_{KU}$ and $\R'_{KO}$, respectively. Here $\K_{KU}$ is the full subcategory of $\Ho((\Mod KU_{(p)})^{\C_2})$ consisting of diagrams $X \in \Ho((\Mod KU_{(p)})^{\C_2})$ satisfying the conditions (i), (ii) and (iii) from Subsection \ref{introFranke} and for which additionally $\pi_*KU_{(p)}$-modules $\pi_*(X_{\zeta_0})$, $\pi_*(X_{\zeta_1})$, $\pi_*(X_{\beta_0})$ and $\pi_*(X_{\beta_1})$ are projective. Similarly the full subcategory $\K_{KO}$ of $\Ho((\Mod KO_{(p)})^{\C_4})$ consists of those diagrams $Y$ which satisfy the conditions (i), (ii) and (iii) from Subsection \ref{introFranke} and for which $\pi_*(Y_{\zeta_i})$ and $\pi_*(Y_{\beta_i})$, $i=0,1,2,3,$ are projective $\pi_*KO_{(p)}$-modules. 

The functors $\R'_{KU}$ and $\R'_{KO}$ induce $\R_{KU}$ and $\R_{KO}$ after passing to the derived categories. (Strictly speaking this construction of $\R_{KO}$ formally differs from the one we were using before, since in the original construction we do not restrict to those diagrams $Y$ for which $\pi_*(Y_{\zeta_i})$ and $\pi_*(Y_{\beta_i})$, $i=0,1,2,3,$ are projective $\pi_*KO_{(p)}$-modules. However, this does not make any difference and the resulting functors between derived categories are the same. The reason is that the essential image of $\Q$ in both cases contains all cofibrant differential graded $\pi_*KO_{(p)}$-modules. Hence the localization of the essential image of $\Q$ gives the full derived category $\D(\pi_*KO_{(p)})$ in both cases.) 

In order to be able to compare $\R_{KU}$ and $\R_{KO}$, we have to slightly modify the construction of $\R_{KO}$ and replace it by an isomorphic functor whose construction will use the diagram $\C_2$ rather than $\C_4$. 

The homotopy ring $\pi_*KO_{(p)}$ is concentrated in degrees divisible by $4$. In particular, it is concentrated in degrees divisible by $2$. We can apply the construction of Subsection \ref{introFranke} to $KO_{(p)}$ by taking only into account that the ring $\pi_*KO_{(p)}$ is $2$-sparse. This gives us a composite
$$\xymatrix{\R''_{KO} \colon \Q(\K_{KO}^{(2)}) \ar[rr]^-{\Q^{-1}} & & \K_{KO}^{(2)} \subseteq \Ho((\Mod KO_{(p)})^{\C_2}) \ar[rr]^-{\Hocolim} & & \D(KO_{(p)}),}$$
where $\K_{KO}^{(2)}$ is the full subcategory consisting of diagrams $Z \in \Ho((\Mod KO_{(p)})^{\C_2})$ satisfying (i), (ii) and (iii) from Subsection \ref{introFranke} and for which additionally $\pi_*KO_{(p)}$-modules $\pi_*(Z_{\zeta_0})$, $\pi_*(Z_{\zeta_1})$, $\pi_*(Z_{\beta_0})$ and $\pi_*(Z_{\beta_1})$ are projective. After passing to the derived category the functor $\R''_{KO}$ induces a functor
$$\R_{KO}^{(2)} \colon \D(\pi_*KO_{(p)}) \to \D(KO_{(p)}).$$
The functor $\R_{KO}^{(2)}$ is naturally isomorphic to $\R_{KO}$. To see this we construct a functor
$$F \colon \K_{KO} \to \K_{KO}^{(2)}.$$
Given $X \in \K_{KO}$ with structure maps $l_i \colon X_{\beta_i} \to X_{\zeta_i}$ and $k_i \colon X_{\beta_{i-1}} \to X_{\zeta_i}$, $i \in \mathbb{Z}/4\mathbb{Z}$, define 
$$F(X)_{\zeta_0} = X_{\zeta_0} \vee X_{\zeta_2}, \;\;\;\; F(X)_{\zeta_1} = X_{\zeta_1} \vee X_{\zeta_3}, $$ 
$$F(X)_{\beta_0} = X_{\beta_0} \vee X_{\beta_2}, \;\;\;\; F(X)_{\beta_1} = X_{\beta_1} \vee X_{\beta_3}, $$
with structure maps 
$$l_0^{F(X)} = l_0 \vee l_2 \colon F(X)_{\beta_0} \to  F(X)_{\zeta_0},$$
$$l_1^{F(X)} = l_1 \vee l_3 \colon F(X)_{\beta_1} \to  F(X)_{\zeta_1},$$
$$k_1^{F(X)} = k_1 \vee k_3 \colon F(X)_{\beta_0} \to  F(X)_{\zeta_1},$$
and
$$\xymatrix{k_0^{F(X)} \colon F(X)_{\beta_1}= X_{\beta_1} \vee X_{\beta_3} \ar[r]^-{k_2 \vee k_0} & X_{\zeta_2} \vee X_{\zeta_0} \cong  X_{\zeta_0} \vee X_{\zeta_2} = F(X)_{\zeta_0}.}$$
By definition, we have $\Q \cong \Q \circ F$ and $\Hocolim \circ F \cong \Hocolim$. Now by first forming a mate of the isomorphism $\Q \cong \Q \circ F$ and then pasting one gets
$$\xymatrix{
\Q(\K_{KO}^{(2)}) \ar[r]^-{\Q^{-1}}   & \K_{KO}^{(2)}  \ar[r]^-{\Hocolim}  &  \D(KO_{(p)})  \\
\Q(\K_{KO}) \ar[r]^-{\Q^{-1}} \xtwocell[-1,1]{}\omit   \ar@{=}[u]  & \K_{KO} \ar[r]^-{\Hocolim}  \xtwocell[-1,1]{}\omit \ar[u]^{F} &     \D(KO_{(p)}). \ar@{=}[u]
}$$
We remind the reader that the inverses $\Q^{-1}$ are chosen so that $(\Q^{-1}, \Q)$ are adjoint equivalences with $\Q^{-1}$ being the left adjoint. Since both natural transformations involved in the latter pasting are natural isomorphisms, one obtains the desired natural isomorphism $\R_{KO}^{(2)} \cong \R_{KO}$. Hence in order to proof the proposition it suffices to show that the diagram
$$\xymatrix{\D(\pi_*KU_{(p)}) \ar[d]_-{\iota^*} \ar[r]^-{\R_{KU}} & \D(KU_{(p)}) \ar[d]^-{\iota^*} \\ \D(\pi_*KO_{(p)}) \ar[r]^-{\R_{KO}^{(2)}} & \D(KO_{(p)})}$$
commutes up to a natural isomorphism. 

The ring homomorphism $\iota \colon \pi_*KO_{(p)} \to \pi_*KU_{(p)}$ makes $\pi_*KU_{(p)}$ into a free graded $\pi_*KO_{(p)}$-module. This implies that the functor $\iota^* \colon \Mod \pi_*KU_{(p)} \to \Mod \pi_*KO_{(p)}$ sends the underlying projective differential graded modules to the underlying projective ones. Since the essential images $\Q(\K_{KO}^{(2)}) \subset \Mod \pi_*KO_{(p)}$ and $\Q(\K_{KU}) \subset \Mod \pi_*KU_{(p)}$ consist exactly from these kind of differential graded modules (see \cite[3.3]{Pat12}), we see that the functor $\iota^*$ restricts to a functor
$$\iota^* \colon \Q(\K_{KU}) \to \Q(\K_{KO}^{(2)}).$$
Now let $\theta \colon \iota^* \Q \cong \Q \iota^*$ denote the obvious natural isomorphism which can be constructed directly using the definitions. Further, $\kappa \colon \Hocolim \iota^* \cong \iota^* \Hocolim$ stands for the canonical natural isomorphism coming from the fact that the functor $\iota^* \colon \D(KU_{(p)}) \to \D(KO_{(p)})$ preserves homotopy colimits. The pasting
$$\xymatrix{
\Q(\K_{KU}) \ar[d]_{\iota^*} \ar[r]^-{\Q^{-1}}   & \K_{KU} \ar[d]^{\iota^*} \ar[r]^-{\Hocolim} & \D(KU_{(p)}) \ar[d]^{\iota^*} \\
 \Q(\K_{KO}^{(2)}) \ar[r]_-{\Q^{-1}}  & \K_{KO}^{(2)} \ar[r]_-{\Hocolim}  \xtwocell[-1,-1]{}\omit & \D(KO_{(p)}). \xtwocell[-1,-1]{}\omit
}$$
gives a natural isomorphism $\kappa \odot \theta_{!} \colon \R''_{KO} \iota^* \cong \iota^* \R'_{KU}$. After passing to the derived categories we get the desired natural isomorphism $\R_{KO}^{(2)} \iota^* \cong \iota^* \R_{KU}$. \end{proof}
 
Next, we would like to see that the natural isomorphism described in Proposition \ref{KUKOcom} is compatible in an appropriate sense with the suspension isomorphisms. Recall again that by Theorem \ref{gentriangleequi}, the equivalence 
$$\R_{KO} \colon \D(\pi_*KO_{(p)}) \to \D(KO_{(p)})$$ 
together with the natural isomorphism $a \colon \R_{KO} \circ [1] \cong \Sigma \circ \R_{KO}$ is a triangulated equivalence. By conjugating with the natural isomorphism $\R_{KO}^{(2)} \cong \R_{KO}$, one gets a natural isomorphism 
$$a^{(2)} \colon \R_{KO}^{(2)} \circ [1] \cong \Sigma \circ \R_{KO}^{(2)}.$$
This natural isomorphism makes the functor $\R_{KO}^{(2)} \colon \D(\pi_*KO_{(p)}) \to \D(KO_{(p)})$ into a triangulated equivalence. On the other hand, we also have the natural isomorphism 
$$\xymatrix{\xi_{KO} \colon \R_{KO}^{(2)} \circ [1]  \ar[r]^-{\cong} &  \Sigma \circ \R_{KO}^{(2)}}$$
described at the beginning of Subsection \ref{mainresultsubsec}. Theorem \ref{selfequiKO} implies that there exists an invertible number $u \in \mathbb{Z}_{(p)}^{\times}$ such that $u \xi_{KO} = a^{(2)}$. Define $s \colon \R_{KU} \circ [1] \cong \Sigma \circ \R_{KU}$ to be the natural isomorphism $u \xi_{KU}$. Finally, we are ready to prove the main theorem of this paper.
\begin{theo} \label{mainKUtheo} Let $p$ be an odd prime. The functor
$$\R=\R_{KU} \colon \D(\pi_*KU_{(p)}) \to \D(KU_{(p)})$$
together with the natural isomorphism $s \colon \R_{KU} \circ [1] \cong \Sigma \circ \R_{KU}$ is a triangulated equivalence. 

\end{theo}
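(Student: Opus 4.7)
The functor $\R_{KU}$ is already known to be an equivalence of categories by \cite[Theorem 3.1.4]{Pat12}, so the task reduces to verifying that, together with the shift isomorphism $s = u\xi_{KU}$, it sends distinguished triangles to distinguished triangles. My plan is to check this via restriction along $\iota$. Fix an elementary distinguished triangle $T \colon X \xto{f} Y \xto{g} Z \xto{h} X[1]$ in $\D(\pi_*KU_{(p)})$. By Corollary \ref{retracttr}, the image $\R_{KU}(T)$ (rotated via $s$) is distinguished in $\D(KU_{(p)})$ if and only if $\iota^*\R_{KU}(T)$ is distinguished in $\D(KO_{(p)})$. Proposition \ref{KUKOcom} produces a natural isomorphism $\phi \colon \iota^*\R_{KU} \cong \R_{KO}^{(2)}\iota^*$, so it suffices to check the analogous statement for $\R_{KO}^{(2)}(\iota^*T)$.

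Since restriction of scalars along the graded ring homomorphism $\iota \colon \pi_*KO_{(p)} \to \pi_*KU_{(p)}$ is a triangulated functor $\iota^* \colon \D(\pi_*KU_{(p)}) \to \D(\pi_*KO_{(p)})$, the triangle $\iota^*(T)$ is already distinguished in the algebraic derived category. By Theorem \ref{gentriangleequi} and Example \ref{oddko}, the pair $(\R_{KO}, a)$ constitutes a triangulated equivalence $\D(\pi_*KO_{(p)}) \to \D(KO_{(p)})$. Transporting along the natural isomorphism $\R_{KO}^{(2)} \cong \R_{KO}$ used in the proof of Proposition \ref{KUKOcom} turns this into a triangulated equivalence $(\R_{KO}^{(2)}, a^{(2)})$. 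In particular, $\R_{KO}^{(2)}(\iota^*T)$ equipped with $a^{(2)}$ is a distinguished triangle in $\D(KO_{(p)})$.

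The main obstacle --- and the reason for inserting the scalar $u \in \mathbb{Z}_{(p)}^{\times}$ in the definition of $s$ --- is to verify that, under $\phi$, the suspension isomorphism $s$ for $\R_{KU}$ matches $a^{(2)}$ for $\R_{KO}^{(2)}$. The plan is to first establish the analogous compatibility between $\xi_{KU}$ and $\xi_{KO}$, namely that
$$\Sigma\phi \circ \iota^*\xi_{KU} \;=\; \xi_{KO}\iota^* \circ \phi[1]$$
as natural transformations $\iota^*\R_{KU}[1] \to \Sigma\R_{KO}^{(2)}\iota^*$. This is a pasting identity: the isomorphisms $\xi_{KU}$ and $\xi_{KO}$ arise from the identical recipe applied to the natural transformations $[1]\circ \Q \cong \Q \circ \#$ and $\Hocolim \circ \# \cong \Sigma \circ \Hocolim$, and $\iota^*$ commutes with $\#$ (which is defined levelwise on $\C_2$-diagrams), with $\Q$ via the isomorphism $\theta$, and with $\Hocolim$ via $\kappa$ --- precisely the ingredients assembled to construct $\phi$ in the proof of Proposition \ref{KUKOcom}. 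I expect this verification to reduce to a routine bookkeeping of mates and paste-compatibilities, analogous to the arguments in Subsection \ref{preservetr}. Once this unscaled compatibility is in place, multiplying by $u$ and invoking the identity $u\xi_{KO} = a^{(2)}$ derived from Theorem \ref{selfequiKO} --- together with the definition $s = u\xi_{KU}$ --- yields exactly the required compatibility. Combining this with the preceding paragraph shows that $\R_{KU}(T)$ is distinguished in $\D(KU_{(p)})$, completing the proof.
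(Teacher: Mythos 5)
Your proposal follows essentially the same route as the paper: reduce via Corollary \ref{retracttr} to the $KO_{(p)}$ side, invoke the natural isomorphism $\iota^*\R_{KU} \cong \R_{KO}^{(2)}\iota^*$ from Proposition \ref{KUKOcom}, use that $(\R_{KO}^{(2)}, a^{(2)})$ is a triangulated equivalence and that the algebraic $\iota^*$ is triangulated, and then verify compatibility of the suspension isomorphisms by first reducing $s = u\xi_{KU}$ and $a^{(2)} = u\xi_{KO}$ to the unscaled identity, which is checked by a mate/pasting argument with $\theta$ and $\kappa$. This matches the paper's proof step for step; the only thing you leave implicit is the explicit decomposition of the pasting verification into the two commutative squares the paper writes out (one for $(\theta)_{!}$ against $(\xi')_{!}$, one for $\kappa$ against $\xi''$), but your description of what must be done and why it works is accurate.
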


\begin{proof} We already know from Subsection \ref{introFranke} that the functor $\R_{KU}$ is an equivalence of categories \cite[Corollary 5.2.1]{Pat12}. Using that the functor $\R_{KO}^{(2)}$ together with $a^{(2)}$ is a triangulated equivalence and Corollary \ref{retracttr}, one sees that it suffices to check that the diagram of natural isomorphisms
$$\xymatrix{\R_{KO}^{(2)} \iota^* \circ [1] \ar[rr]^-{\cong} \ar@{=}[d] & & \iota^* \R_{KU} \circ [1] \ar[rr]^-{\iota^*  s}   & &    \iota^* \Sigma \R_{KU} \ar@{=}[d] \\  \R_{KO}^{(2)} \circ [1] \iota^* \ar[rr]^-{a^{(2)} \iota^*}  & & \Sigma \R_{KO}^{(2)} \iota^*  \ar[rr]^{\cong} & & \Sigma \iota^* \R_{KU} }$$
commutes. Here the arrows labeled by the isomorphism sign are the isomorphisms constructed in Proposition \ref{KUKOcom}. Since the identities $s = u \xi_{KU}$ and $a^{(2)} = u \xi_{KO}$ hold, checking commutativity of the latter diagram amounts to checking that the diagram
$$\xymatrix{\R_{KO}^{(2)} \iota^* \circ [1] \ar[rr]^-{\cong} \ar@{=}[d] & & \iota^* \R_{KU} \circ [1] \ar[rr]^-{\iota^* \xi_{KU}}   & &    \iota^* \Sigma \R_{KU} \ar@{=}[d] \\  \R_{KO}^{(2)} \circ [1] \iota^* \ar[rr]^-{\xi_{KO} \iota^*}  & & \Sigma \R_{KO}^{(2)} \iota^*  \ar[rr]^{\cong} & & \Sigma \iota^* \R_{KU} }$$
commutes. This can be seen using the explicit description of the transformations $\xi_{KO}$ and $\xi_{KU}$ and the isomorphism $\R_{KO}^{(2)} \iota^* \cong \iota^* \R_{KU}$. More precisely, having the identities 
$$\iota^* \# = \# \iota^*,$$
and
$$\iota^* [1] =[1] \iota^*$$ 
in mind, we can check that $\theta \odot \xi'_{KU} = \xi'_{KO} \odot \theta$. Here
$$\xymatrix{\xi'_{KU} \colon [1] \circ \Q \ar[r]^-{\cong} & \Q \circ \#}$$
and
$$\xymatrix{\xi'_{KO} \colon [1] \circ \Q \ar[r]^-{\cong} & \Q \circ \#}$$
are the natural isomorphisms defined at the beginning of this subsection. Since forming mates and pasting are compatible (see Subsection \ref{paste}), one obtains the identity $(\xi'_{KU})_{!} \odot (\theta)_{!} = (\theta)_{!} \odot (\xi'_{KO})_{!}$. In other words, the diagram
$$\xymatrix{\Q^{-1} \iota^* \circ [1] \ar[rr]^-{(\theta)_{!} [1]}  \ar@{=}[d] & & \iota^* \Q^{-1} \circ [1] \ar[rr]^-{\iota^* (\xi'_{KU})_{!}} & & \iota^* \# \Q^{-1} \ar@{=}[d]  \\ \Q^{-1} \circ [1] \iota^* \ar[rr]^-{(\xi'_{KO})_{!} \iota^*}  & & \#  \Q^{-1} \iota^* \ar[rr]^-{\# (\theta)_{!}} & &   \#\iota^* \Q^{-1}}$$
commutes. Using $\iota^* \Sigma = \Sigma \iota^*$ and $\iota^* \# = \# \iota^*$, it can be checked similarly that the diagram
$$\xymatrix{\Hocolim \iota^* \# \ar[rr]^-{\kappa \#}  \ar@{=}[d] & & \iota^* \Hocolim \# \ar[rr]^-{\iota^* \xi''_{KU}}  & & \iota^* \Sigma \Hocolim \ar@{=}[d]  \\ \Hocolim \# \iota^* \ar[rr]^-{\xi''_{KO} \iota^*}  & & \Sigma \Hocolim \iota^* \ar[rr]^-{\Sigma \kappa} & & \Sigma \iota^* \Hocolim }$$
commutes. Combining these two diagrams and passing to the derived categories yields the desired commutative diagram which finishes the proof.
\end{proof}

%$$\xymatrix{
%J_1 \ar[r]^v \ar[d]_{u_1}  & J_2\ar[d]^{u_2} \\
%K_1\ar[r]_w \xtwocell[-1,1]{}\omit & K_2
%}$$

%$$
%\xymatrix{
%J_1\ar[r]^{v}\ar[d]_{u_1}&J_2\ar[d]^{u_2}  \\
%K_1\ar[r]_{w}  & K_2 \xtwocell[-1,-1]{}\omit
%}
%$$ 

%$$
%\xymatrix{
%J_1\ar[r]^{v}\ar[d]_{u_1}&J_2\ar[d]^{u_2} \xtwocell[1,-1]{}\omit \\
%K_1\ar[r]_{w}  & K_2 
%}
%$$

\vspace{0.5cm}

\noindent Department of Mathematical Sciences \\ University of Copenhagen \\
Universitetsparken 5 \\ 2100 Copenhagen $\emptyset$\\ Denmark

\;

\;

\noindent E-mail address: \texttt{irakli.p@math.ku.dk}


\begin{thebibliography}{XXXX}
% FILL IN REFERENCES YOU USED.


% \cite[theorem 2, page xxx]{Bezug}

\bibitem{BKS04}
\textbf{D. Benson, H.Krause, S. Schwede},
{\it Realizability of modules over Tate cohomology},\\
Trans. Amer. Math. Soc., \textbf{356}, No. 9, (2004), 3621-3668.

\bibitem{Ben}
\textbf{R. Bentmann},
{\it Homotopy-theoretic E-theory and n-order},\\
J. Hom. Rel. Struct., \textbf{9}, (2014), 455-463.

\bibitem{B85}
\textbf{A. K. Bousfield},
{\it On the homotopy theory of $K$-local spectra at an odd prime},\\
Amer. J. Math., \textbf{107}, No.4, (1985), 895-932.

\bibitem{B90}
\textbf{A. K. Bousfield},
{\it A classification of K-local spectra},\\
J. Pure Appl. Alg., \textbf{66}, (1990), 121-163.

\bibitem{BJS}
\textbf{U. Bunke, M. Joachim, S. Stolz},
{\it Classifying spaces and spectra representing the K-theory of a graded C*-algebra},\\
High-dimensional manifold topology, World Sci. Publishing, (2003), 80-114.

\bibitem{C98}
\textbf{J. D. Christensen},
{\it Ideals in triangulated categories: Phantoms, ghosts and skeleta},\\
Adv. Math., \textbf{136}, (1998), 284-339.

\bibitem{Cis03}
\textbf{D-C. Cisinski}, 
{\it Images directes cohomologiques dans les cat\'{e}gories de mod\'{e}les},\\ 
Ann. Math. Blaise Pascal, \textbf{10} (2), (2003), 195-244.

\bibitem{CN08}
\textbf{D-C. Cisinski, A Neeman}, 
{\it Additivity for derivator K-theory},\\ 
Adv. Math., \textbf{217}, (2008), 1381-1475.

\bibitem{DEKM}
\textbf{I. Dell'Ambrogio, H. Emerson, T. Kandelaki, R. Meyer},
{\it A functorial equivariant $K$-theory spectrum and an equivariant Lefschetz formula},\\
preprint, http://arxiv.org/abs/1104.3441, (2011).  

\bibitem{DS95}
\textbf{W. G. Dwyer, J. Spalinski},
{\it Homotopy theories and model categories},\\
Handbook of algebraic topology, Elsevier, Amsterdam, (1995), 73-126.


\bibitem{Ehr63}
\textbf{C. Ehresmann},
{\it Cat\'{e}gories structur\'{e}es},\\
Ann. Sci. \'{E}cole Norm. Sup., \textbf{80} (3), (1963), 349-426. 

\bibitem{Ehr65}
\textbf{C. Ehresmann},
{\it Cat\'{e}gories et structures},\\
Dunod, Paris (1965).


\bibitem{F96}
\textbf{J. Franke},
{\it Uniqueness theorems for certain triangulated categories possessing an Adams spectral sequence},\\ preprint, K-theory Preprint Archives 139, http://www.math.uiuc.edu/K-theory/0139/, (1996).


\bibitem{GM96}
\textbf{S. I. Gelfand, Y. I. Manin},
{\it Methods of homological algebra},\\
Springer-Verlag, Berlin, (1996).


\bibitem{GP99}
\textbf{M. Grandis, R. Par\'{e}},
{\it Limits in double categories},\\ 
Cahiers Topol. G\'{e}om. Diff. Cat\'{e}g., \textbf{40}, (1999), 162-220.

\bibitem{G99}
\textbf{J. Greenlees},
{\it Rational $S^1$-equivariant stable homotopy theory},\\
Mem. Amer. Math. Soc., \textbf{138}, No. 661, (1999).

\bibitem{Gro13}
\textbf{M. Groth},
{\it Derivators, pointed derivators and stable derivators},\\
Algebr. Geom. Topol., \textbf{13}, (2013), 313-374.


\bibitem{H97}
\textbf{V. Hinich},
{\it Homological algebra of homotopy algebras},\\
Comm. in Alg., \textbf{25}, No.10, (1997), 3291-3323.



\bibitem{H99}
\textbf{M. Hovey},
{\it Model categories},\\
Mathematical Surveys and Monographs, Amer. Math. Soc., Providence, RI, \textbf{63}, (1999).

\bibitem{HSS00}
\textbf{M. Hovey, J. Smith, B. Shipley},
{\it  Symmetric spectra},\\
J. Amer. Math. Soc., \textbf{13}, (2000), 149-208.

\bibitem{KS74}
\textbf{G. M. Kelly, R. Street}, 
{\it Review of the elements of 2-categories},\\ 
Proceedings of the Sydney Category Theory Seminar, Lecture notes in mathematics \textbf{420}, Springer, Berlin,
(1974), 75-103. 

\bibitem{L03}
\textbf{A. Lazarev},
{\it Towers of $MU$-algebras and the generalized Hopkins-Miller theorem },\\
Proc. Lond. Math. Soc., \textbf{87} (3), (2003), 498-522.

\bibitem{Mah}
\textbf{S. Mahanta},
{\it Colocalizations of noncommutative spectra and bootstrap categories},\\
Adv. Math., \textbf{285}, (2015), 72-100.

\bibitem{Mal07}
\textbf{G. Maltsiniotis},
{\it La $K$-th\'{e}orie d'un d\'{e}rivateur triangul\'{e}},\\ 
Categories in algebra, geometry and mathematical physics,
Contemp. Math., \textbf{431}, Amer. Math. Soc., Providence, RI, (2007), 341-368. 

\bibitem{Mit68}
\textbf{B. Mitchell},
{\it On the dimensions of objects and categories II. Finite ordered sets},\\
J. Alg., \textbf{9}, (1968), 341-368.

\bibitem{MR}
\textbf{F. Muro, G. Raptis},
{\it A note on $K$-theory and triangulated derivators},\\ 
Adv. Math., \textbf{227}, (2011), No. 5, 1827-1845.

\bibitem{Pat12}
\textbf{I. Patchkoria},
{\it On the algebraic classification of module spectra},\\
Algebr. Geom. Topol., \textbf{12}, (2012), 2329-2388.

\bibitem{Q67}
\textbf{D. Quillen},
{\it Homotopical algebra},\\
Lecture notes in mathematics, \textbf{43}, Springer, Berlin, (1967).


\bibitem{Rog08}
\textbf{J. Rognes},
{\it Galois extensions of structured ring spectra. Stably dualizable groups},\\
Mem. Amer. Math. Soc., \textbf{192}, No. 898, (2008).

\bibitem{R07}
\textbf{C. Roitzheim},
{\it  Rigidity and exotic Models for the $K$-local stable homotopy category},\\
Geom. Topol., \textbf{11}, (2007), 1855-1886.

\bibitem{R08}
\textbf{C. Roitzheim},
{\it On the algebraic classification of $K$-local spectra},\\
Homology Homotopy Appl., \textbf{10}, (2008), 389-412.

\bibitem{Schl}
\textbf{M. Schlichting}
{\it A note on $K$-theory and triangulated categories},\\
Invent. Math., \textbf{150}, No. 1, (2002), 111-116.


\bibitem{Schw}
\textbf{S. Schwede},
{\it The stable homotopy category is rigid},\\
Ann. Math., \textbf{166}, (2007), 837-863.

\bibitem{Schw1}
\textbf{S. Schwede},
{\it The $n$-order of algebraic triangulated categories},\\
J. Topol., \textbf{6}, (2013), 857-867.

\bibitem{SS00}
\textbf{S. Schwede, B. Shipley},
{\it Algebras and modules in monoidal model categories},\\
Proc. Lond. Math. Soc., \textbf{80}, (2000), 491-511.

\bibitem{SS03}
\textbf{S. Schwede, B. Shipley},
{\it Stable model categories are categories of modules},\\
Topology, \textbf{42}, (2003), 103-153.

\bibitem{S04}
\textbf{B. Shipley},
{\it A convenient model category for commutative ring spectra},\\
Homotopy theory: relations with algebraic geometry, group cohomology, and algebraic $K$-theory, Contemp. Math., \textbf{346}, Amer. Math. Soc., Providence, RI, (2004), 473-483.

\bibitem{S07}
\textbf{B. Shipley},
{\it  $H\Z$-algebra spectra are differential graded algebras},\\
Amer. J. Math., \textbf{129}, (2007), 351-379.

\bibitem{W94}
\textbf{C. A. Weibel}, 
{\it An introduction to homological algebra},\\ Cambridge Studies in Advanced Mathematics. 38., 
Cambridge Univ. Press, Cambridge, (1994).

\bibitem{W98}
\textbf{J. J. Wolbert},
{\it  Classifying modules over $K$-theory spectra},\\
J. Pure Appl. Alg., \textbf{124}, (1998), 289-323.





\end{thebibliography}
\end{document}